
\documentclass[leqno]{siamart1116}
\pdfoutput=1

\usepackage[algo2e,linesnumbered,vlined,ruled]{algorithm2e}
\usepackage{subfigure}
\usepackage{tikz}

\usepackage{comment}
\usepackage{times}
\usepackage{amsmath,amsxtra,amsfonts,amscd,amssymb,bm}
\usepackage{color}
\usepackage{exscale}
\usepackage{pgfplots}
\usepackage{multirow} 

\newtheorem{assumption}[theorem]{Assumption}

\newcommand{\be}{\begin{equation}}
\newcommand{\ee}{\end{equation}}
\newcommand{\bee}{\begin{equation*}}
\newcommand{\eee}{\end{equation*}}
\newcommand{\bea}{\begin{eqnarray}}
\newcommand{\eea}{\end{eqnarray}}
\newcommand{\beaa}{\begin{eqnarray*}}
	\newcommand{\eeaa}{\end{eqnarray*}}

\newcommand{\st}{\quad \textrm{s.t.}\quad}  
\newcommand{\R}{\mathbb{R}}

\newcommand{\cU}{\mathcal{U}}

\newcommand{\iprod}[2]{\left\langle{#1},{#2}\right\rangle}

\newcommand{\cpn}{\mathcal{CP}_n}  
\newcommand{\half}{\frac{1}{2}}

\newcommand{\mak}{\mathcal{A}^{n\times k}} 
\newcommand{\mar}{\mathcal{A}^{n\times r}}

\DeclareMathOperator*{\argmin}{arg\,min}

\allowdisplaybreaks

\graphicspath{ {./fig/} }

\title{A Sparse Completely Positive Relaxation of the Modularity Maximization for Community Detection} 
\author{Junyu Zhang \thanks{Department of Industrial and System Engineering, University of Minnesota (zhan4393@umn.edu).}
\and
Haoyang Liu \thanks{Beijing International Center for Mathematical Research, Peking University (liuhaoyang@pku.edu.cn).}
\and
Zaiwen Wen
\thanks{Beijing International Center for Mathematical
Research, Peking University (wenzw@pku.edu.cn).
Research supported in part by the NSFC grant 11421101, and by the National
Basic Research Project under the grant 2015CB856002.}
\and
Shuzhong Zhang
\thanks{Department of Industrial and System Engineering, University of Minnesota (zhangs@umn.edu).}
}

\begin{document}
\maketitle 
\begin{abstract}
	In this paper, we consider the community detection problem under either the stochastic block model (SBM) assumption or the degree-correlated
    stochastic block model (DCSBM) assumption. The modularity maximization formulation for the community detection problem is NP-hard in general. In this paper, we propose a sparse and low-rank completely
    positive relaxation for the modularity maximization problem, we then develop an
    efficient row-by-row (RBR) type block coordinate descent (BCD) algorithm to solve
    the relaxation and prove an $\mathcal{O}(1/\sqrt{N})$ convergence rate
    to a stationary point where $N$ is the number of iterations. A fast rounding scheme is constructed to retrieve the
    community structure from the solution. Non-asymptotic high probability
    bounds on the misclassification rate are established to justify our
    approach. We further develop an asynchronous parallel RBR algorithm to speed
    up the convergence. Extensive numerical experiments on both synthetic and
    real world networks show that the proposed approach enjoys advantages in both
    clustering accuracy and numerical efficiency.  Our numerical results indicate that the newly proposed method is a quite competitive alternative for community detection on sparse networks with over 50 million nodes.
\end{abstract}  

\begin{keywords}Community Detection, Degree-Correlated Stochastic Block Model,
  Completely Positive Relaxation, Proximal Block Coordinate Descent Method, Non-asymptotic Error Bound.
\end{keywords}
\begin{AMS}90C10, 90C26, 90C30 \end{AMS}

  \section{Introduction} \label{sec:intro1}
The community detection problem aims to retrieve the underlying
community/cluster structure of a network from the observed nodes connection
data. This network structure inference technique has been reinvigorated because of the modern applications of  large scale networks, including social networks, energy distribution networks and economic networks, etc. The standard frameworks for studying
community detection in networks are the stochastic block model (SBM)
\cite{SBM-1983} and the degree-correlated stochastic block model (DCSBM)
\cite{DCSBM-Newman-2011,DCSBM-Spectral-Dasgupta-2004}. Both of them are random
graph models based on an underlying disjoint community structure. To solve the
community detection problem, there have been a variety of greedy methods which
aim to maximize some quality function of the community structure; see
\cite{otherMethods-Wang-2015} and the references therein. Although these methods are
numerically efficient, their theoretical analysis is still largely missing.

An important class of methods for community detection are the so-called spectral
clustering methods, which exploit either the spectral decomposition of the adjacency matrix \cite{Spectral-Mcsherry-2001, Spectral-Sussman-2012, Spectral-SCORE-Jin-2015}, or the normalized adjacency or graph Laplacian matrix \cite{DCSBM-Spectral-Dasgupta-2004,	Spectral-Coja-2009, 	Spectral-Gulikers-2015, Spectral-Rohe-2011, Spectral-Chaudhuri-2012, Spectral-Qin-2013}. A significant advantage of the spectral clustering methods is that they are numerically efficient and  scalable. Although many of them are shown to be statistically consistent with the SBM and the DCSBM conditions under certain conditions, their numerical performance on synthetic and real data does not fully support such hypothesis. This can also be observed in the numerical experiments where the spectral methods SCORE \cite{Spectral-SCORE-Jin-2015} and OCCAM \cite{Spectral-OCCAM-Zhang-2014} are tested. It is worth mentioning that spectral clustering methods may fail to detect the communities for large sparse networks.

Another closely connected class of methods are based on the nonnegative matrix factorization (NMF) methods. 
In \cite{NMF-Equivalence-Ding-2005}, an interesting equivalence is shown between
some specific NMF-based methods, the kernel K-means method and a specific
spectral clustering method. There have been extensive research on graph
clustering and community detection problems; see e.g. \cite{NMF-Orthoganal-Ding-2006,
NMF-Kim-2008, NMF-Wang-2011, NMF-Psorakis-2011, NMF-Kuang-2012,	NMF-Yang-2012}.
The clustering performance and scalability of these methods are well appreciated, while their theoretical guarantees are still not fully understood. In \cite{NMF-Orthoganal-Paul-2016}, the authors address this issue by showing the asymptotic consistency of their NMF-based community detection algorithm under the SBM or the DCSBM. However, a non-asymptotic error bound is still unknown. 

In addition to the nonconvex methods, significant advances have also been made
by means of  convex relaxation. In
\cite{LR+Sparse-Oymak-2011,Convex-Chen-2012,Convex-Chen-2014}, the authors
propose to decompose the adjacency matrix or its variants into a ``low-rank +
sparse'' form by using nuclear norm minimization methods. In
\cite{CMM-X.Li-2015}, an SDP relaxation of the modularity maximization followed
by a doubly weighted k-median clustering is proposed. This method is
demonstrated to be very competitive with nice theoretical guarantees under the
SBM or the DCSBM against the state-of-the-art methods in various datasets; see
\cite{Convex-Cai+Li-2015,Convex-GrothIEQ-Guedon-2016} for more examples. Although these convex methods are numerically stable and statistically efficient under various model settings, they are algorithmically not scalable. This is because solving these problems  involves expensive subroutines such as full eigenvalue decomposition or matrix inversion, making these algorithms unsuitable for huge datasets.

In this paper, a sparse and low-rank completely positive relaxation for the
modularity maximization problem \cite{Modularity-Newman-2006} is proposed. Specifically, we define an assignment matrix $\Phi$ for $k$ communities among $n$
nodes as an $n\times k$ binary matrix such that each row of $\Phi$ corresponds
to a particular node in the network and contains exactly one element equal to one. The column index of each element indicates which community this node
belongs to. Consequently, we reformulate the modularity maximization as an optimization
over the set of low-rank assignment matrices. Then, we relax the binary
constraints for each row of the assignment matrix and impose the nonnegative and
spherical constraints. The cardinality constraints can be optionally added in
order to handle large scale datasets. To solve the resulting relaxation, we
regard each row of the relaxed assignment matrix as a block-variable and propose a
row-by-row (RBR) proximal block coordinate descent approach. We show that a closed-form
solution for these nonconvex subproblems is available and an
$\mathcal{O}(1/\sqrt{N})$ convergence rate to a stationary point can be obtained.
One can then proceed with either K-means or K-median clustering to retrieve the
community structure from the solution of the relaxed modularity maximization
problem. Since these clustering schemes themselves are expensive, a simple yet  
efficient rounding scheme is constructed for the community retrieval purpose.
Non-asymptotic high-probability bounds are established for the misclassification
rate under the DCSBM. The properites of the SBM can be obtained as a special
case of the DCSBM. We further propose an asynchronous parallel computing scheme
as well as an iterative rounding scheme to enhance the efficiency of our
algorithm. Finally, numerical experiments on both real world and synthetic
networks show that  our method is competitive with the SCORE
\cite{Spectral-SCORE-Jin-2015}, OCCAM \cite{Spectral-OCCAM-Zhang-2014} and CMM
\cite{CMM-X.Li-2015} algorithms regarding both numerical efficiency and
clustering accuracy. We also perform experiments on extremely large sparse
networks with up to 50 million nodes and compare the proposed method with the
LOUVAIN algorithm \cite{2008JSMTE..10..008B}. It turns out that our method is at least
comparable to  the LOUVAIN
algorithm in terms of  finding a
community assignment with a small number of communities and a high modularity. 

The rest of the paper is organized as follows. In Section \ref{section: PrPr},
we set up the community detection problem and introduce the proposed framework.
In Section \ref{section:Algorithm}, we present our algorithm and the convergence
results as well as the asynchronous parallel scheme.  In Section
\ref{section:StatTheories}, we validate the statistical consistency of our
methods by non-asymptotically bounding the misclassification rate under the
DCSBM. Finally, the numerical experiments and comparisons are presented in Section \ref{section:numericalEXP}.

\section{Model Descriptions}\label{section: PrPr}
\subsection{The Degree-Correlated Stochastic Block Model}
In \cite{SBM-1983}, Holland et al. proposed the so called stochastic block model
(SBM), where a non-overlapping true community structure is assumed. This
structure is a partition $\{C_1^*,...,C_k^*\}$ of the nodes set $[n] =
\{1,...,n\}$, that is 
$$C_a^*\cap C_b^* = \emptyset,\forall a\neq b \text{ and } \cup_{a=1}^kC_a^* = [n].$$
Under the SBM, the network is generated as a random graph  characterized by a
symmetric matrix $B\in S^{k\times k}$ with all entries ranging between 0 and 1.
Let $A\in\{0,1\}^{n\times n}$ be the adjacency matrix of the network with
$A_{ii}=0, \forall i\in[n]$. Then for $i\in C_a^*, j\in C_b^*, i\neq j$,
$$A_{ij} = \begin{cases}
1, & \mbox{ with probability }  B_{ab}, \\
0, & \mbox{ with probability }  1-B_{ab}.
\end{cases}$$

One significant drawback of the SBM is that it oversimplifies the network
structure. All nodes in a same community are homogeneously characterized. The
DCSBM is then proposed to alleviate this drawback by introducing the \emph{degree
heterogeneity} parameters $\theta = (\theta_1,...,\theta_n)$ for each node; see
\cite{DCSBM-Spectral-Dasgupta-2004,DCSBM-Newman-2011}. If one keeps the definitions of $B$ and $C_a^*, a=1,...,k$ and still let $A$ be the adjacency matrix of the network with $A_{ii}=0, \forall i\in[n]$, then for $i\in C_a^*, j\in C_b^*, i\neq j$,
$$A_{ij} = \begin{cases}
1, & \mbox{ with probability }  B_{ab}\theta_i\theta_j, \\
0, & \mbox{ with probability }  1-B_{ab}\theta_i\theta_j.
\end{cases}$$ 
Hence, the heterogeneity of nodes  is characterized by this vector $\theta$. Potentially, a node $i$ with larger $\theta_i$ has more edges linking to the other nodes.

\subsection{A Sparse and Low-Rank Completely Positive Relaxation}\label{sec:intro}
For the community detection problem,
one popular method is to maximize a community quality function called ``modularity'' proposed in \cite{Modularity-Newman-2006}. We say that a binary matrix $X$ is a partition matrix, if there exists a partition $\{S_1,...,S_r\}, r\geq 2$ of the index/node set $[n]$ such that $X_{ij} = 1$ for $i,j\in S_t, t\in\{1,...,r\}$ and $X_{ij} = 0$ otherwise. Then a given community structure of a network, not necessarily the true underlying structure, is fully characterized by its associated partition matrix.
We denote the degree vector by $d$ where $d_i = \sum_j A_{ij}, i\in[n].$ Define the matrix \be \label{eq:defC}C = -(A-\lambda dd^{\top}),\ee
where $\lambda = 
1/{\|d\|_1}$. Therefore the modularity of a community structure is defined as $Q = -\iprod{C}{X}.$
Let the set of all partition matrices of $n$ nodes with \emph{no more than} $r$ subsets be denoted by $\mathcal{P}_n^r$. Then the modularity maximization model is formulated as
\be \label{prob:ptn}\min_{X} \iprod{C}{X} \st 
X\in \mathcal{P}_n^n.  \ee
Assume that we know the number of communities in the network is $k$. Then the problem becomes 
\be \label{prob:ptk}\min_{X} \iprod{C}{X} \st 
X  \in\mathcal{P}_n^k. \ee 

Although modularity optimization enjoys many desirable properties, it is an NP-hard problem. A natural relaxation is the convex SDP relaxation considered in \cite{CMM-X.Li-2015}: 
\be \label{prob:SDP}  
\begin{aligned} \min_{X\in \mathbb{R}^{n\times n}} \quad &   \iprod{C}{X} \\ 
	\st &  X_{ii} = 1, i=1,\ldots,n,\\
	& 0\leq X_{ij} \leq 1, \forall i,j,\\
	& X \succeq 0.
\end{aligned}
\ee
This formulation exhibits desirable theoretical properties under the SBM or the
DCSBM. However, the computational cost is high when the network is
large. This paper aims to formulating a model that retains nice theoretical properties while maintaining computationally viable. Recall the definition of an assignment
matrix in Section \ref{sec:intro1}  and suppose that the number of communities $k$ is no more than $r$. Then the true partition matrix $X^*$ can be decomposed as $X^* = \Phi^*(\Phi^*)^{\top}$, where $\Phi^*\in\{0,1\}^{n\times r}$ is the true assignment matrix. Let us define the set of assignment matrix in $\R^{n\times r}$ to be $\mar$, and define the notations 
$$U = \left(u_1,...,u_n\right)^{\top},\quad \Phi = \left(\phi_1,...,\phi_n\right)^{\top},$$
where the $u_i$'s and $\phi_i$'s are $r$-by-$1$ column vectors. By setting $r = n$ and $r = k$, respectively, problem (\ref{prob:ptn}) and (\ref{prob:ptk}) can be formulated as  

\be \label{prob:pt_CP_nk} 
\min_{\Phi} \quad \iprod{C}{\Phi\Phi^{\top}} ~~~~\st~~~~\Phi\in\mar. 
\ee

A matrix $X \in \R^{n\times n}$ is completely positive (CP) if there exists
$ U\in
\R^{n\times r}, r\geq 1$ such
that $ U \geq 0$, $X=  U  U^\top$. 
Let the set of all $n\times n$ CP matrices be denoted by $\cpn$.  Due to the
reformulation (\ref{prob:pt_CP_nk}), by constraining the $\mathcal{CP}$ rank of
$X$,  we naturally obtain a rank-constrained $\mathcal{CP}$ relaxation of problem (\ref{prob:ptn}) and (\ref{prob:ptk}) as
\be \label{prob:CPn}  \begin{aligned} \min_{X} \quad &   \iprod{C}{X} \\ 
	\st &  X_{ii} = 1, i=1,\ldots,n,\\
	& X \in \cpn, \\
	& {\rm rank}_{\mathcal{CP}}(X) \leq r,
\end{aligned}
\ee 
where ${\rm rank}_{\mathcal{CP}}(X): = \min\{s: X = UU^{\top}, U\in\mathbb{R}^{n\times s}_+\}$. Note that the above problem is still  NP-hard. 
By using a decompsition $X=UU^\top$, we propose  an alternative formulation:
\be \label{prob:CP_DCk}  \begin{aligned} \min_{U\in \mathbb{R}^{n\times r}} \quad &   \iprod{C}{UU^{\top}} \\ 
	\st &  \|u_{i}\|^2 = 1, i=1,\ldots,n,\\
	& \|u_{i}\|_0 \leq p, i=1,\ldots,n,\\
	& U \geq 0,
\end{aligned}
\ee 
where the parameter $\lambda$ in $C$ can be set to a different value other than
$1/\|d\|_1$ and $1\leq p \leq r$ is an integer. Note that the additional
$\ell_0$ constraint is added to each row of $U$ to impose sparsity of the
solution. When $p = r$, that is, the $\ell_0$ constraints vanish, this problem
is exactly equivalent to the $\mathcal{CP}$ relaxation \eqref{prob:CPn}.
Usually, the value of $p$ is set to a small number such that the total amount of
the storage of $U$ is affordable on huge scale datasets. 
 Even though \cref{prob:CP_DCk} is still NP-hard, the decomposition and the
 structure of the problem enable us to develop a computationally efficient
 method to reach at least a stationary point.

We summarize the relationships between different formulations as 
\begin{align*}
&(\ref{prob:ptk}) \qquad \qquad \qquad \qquad \qquad  \\
&~\Updownarrow ~\quad \Rightarrow ~ (\ref{prob:CP_DCk})~ \Rightarrow (\ref{prob:CPn})~ \Rightarrow  ~ \eqref{prob:SDP}, \\
&(\ref{prob:pt_CP_nk}) \qquad \qquad   \qquad \qquad \qquad  
\end{align*}
where formulations (\ref{prob:ptk}) and (\ref{prob:pt_CP_nk}) are equivalent
forms of the NP-hard modularity maximization and problem (\ref{prob:SDP}) is the
SDP relaxation. 
The ``$\Rightarrow$'' indicates that our relaxation is tighter than the convex SDP relaxation proposed in \cite{CMM-X.Li-2015}. If $U$ is feasible to our problem (\ref{prob:CP_DCk}), then $UU^{\top}$ is  automatically feasible to the SDP relaxation (\ref{prob:SDP}). 

 
\subsection{A Community Detection Framework} \label{sec:rounding}
Suppose that a solution $U$ is computed from problem (\ref{prob:CP_DCk}), we can
then use one of the following three possible ways to retrieve the community structure from $U$. 
\begin{itemize}
	\item[i.] Apply the K-means clustering algorithm  directly to the rows of $U$ by solving 
	\bea\label{prob:k-means}
	& \min & \|\Phi X_c-U^*\|_F^2 \nonumber\\
	& \st & \Phi\in\mak, X_c\in\R^{k\times k},
	\eea
	where the variable $\Phi$ contains the community/cluster assignment and $X_c$ contains the corresponding centers.
	\item[ii.] Apply the weighted K-means clustering over the rows of $U$, namely, we solve,
	\bea
	\label{prob:w-k-means}
	& \min &\|D(\Phi X_c-U)\|_F^2 \nonumber \\
	& \st & \Phi\in\mak, X_c\in\R^{k\times k},  
	\eea
	where $D = {\rm diag}\{d_1,...,d_n\}$, $\Phi$ and $X_c$ are interpreted the same as that in (i). 
	\item[iii.] Apply a direct rounding scheme to generate the final community assignment $\Phi$. Let $\phi_{i}^{\top}$ be the $i$th row of $\Phi$, 
	\be
	\label{rounding}
	\phi_{i}\leftarrow e_{j_0}, ~~ j_0 = \arg\max_{j} (u_i)_j,
	\ee
	where $e_{j_0}$ is the $j_0$th unit vector in $\mathbb{R}^k$.
\end{itemize} 
All three methods have theoretical guarantees under the SBM or the DCSBM as will be shown in later sections. The overall numerical framework in this paper is as follows:
\begin{itemize}
	\item[Step 1.] Solve problem (\ref{prob:CP_DCk}) starting from a randomly generated
      initial point  
       and obtain an approximate solution.
	\item[Step 2.] Recover the community assignment via one of the three clustering methods.
	\item[Step 3.] Repeat steps 1 and 2 for a couple of times and then pick up a community assignment with the highest modularity.
\end{itemize}

\section{An Asynchrounous Proximal RBR Algorithm}\label{section:Algorithm}
\subsection{A Nonconvex Proximal RBR Algorithm} In this subsection, we setup a
basic proximal RBR algorithm without parallelization for \eqref{prob:CP_DCk}. Let $r$ be chosen as the community number $k$. Define the feasible set for the block variable $u_i$ to be 
$\cU_i = \mathcal{S}^{k-1}\cap\R^{k}_+\cap M_p^k, $
where $\mathcal{S}^{k-1}$ is the $k-1$ dimensional sphere and $M_p^k = \{x\in\R^k:\|x\|_0\leq p\}$.
Define $\cU = \cU_1\times\cdots\times\cU_n.$ Then problem
\eqref{prob:CP_DCk} can be rewritten as 
\be \label{prob:CPSP}   \begin{aligned} \min_{U \in \cU} \quad & f(U)\equiv \iprod{C}{U U^\top}.
\end{aligned}
\ee
For the $i$th subproblem, we fix all except the $i$th row of $U$ and solve the
subproblem
$$u_i = \arg\min_{x\in\cU_i}f(u_1,...,u_{i-1},x,u_{i+1},...,u_n)+\frac{\sigma}{2}\|x-\bar{u}_i\|^2,$$
where $\bar{u}_i$ is some reference point and $\sigma>0$. Note that the spherical constraint eliminates the second order term $\|x\|^2$ and simplifies the subproblem to 
\be
\label{prob:sphere}
u_i = \arg\min_{x\in\cU_i} b^{\top}x,
\ee
where $b = 2C_{-i}^iU_{-i}-\sigma \bar{u}_i$, and $C_{-i}^i$ is the $i$th row of
$C$ without the $i$th component, $U_{-i}$ is the matrix $U$ without the $i$th
row $u_i^{\top}$. For any positive integer $p$, a closed-form solution can be derived
in the next lemma.  
\begin{lemma}
	For problem $u = \argmin \{b^{\top}x :x\in\mathcal{S}^{k-1}\cap \R^k_+\cap M_p^k\}$, define $b^+ = \max\{b,0\},b^- = \max\{-b,0\},$ where the $\max$ is taken component-wisely. Then the closed-form solution is given by 
	\be u  = \begin{cases} 
	\frac{b^-_p}{\|b^-_p\|}, & \mbox{ if } b^-\neq 0,
		\\
		e_{j_0}, \mbox{ with } j_0 = \arg\min_j b_j, & \mbox{ otherwise},
	\end{cases}\ee
    where $b^-_p$ is obtained by keeping the top $p$ components in $b^-$ and
    letting the others be zero, and when $\|b^-\|_0\leq p$, $b^-_p = b^-$.
\end{lemma}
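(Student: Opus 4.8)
The plan is to establish a lower bound on the objective $b^{\top}x$ that holds for \emph{every} feasible $x$ and then verify that the proposed $u$ attains it, so that optimality follows immediately. The starting observation is that nonnegativity of $x$ on the feasible set lets us discard the positive part of $b$: writing $b=b^+-b^-$ componentwise, we have $b^{\top}x=(b^+)^{\top}x-(b^-)^{\top}x\ge -(b^-)^{\top}x$ for all feasible $x$, since $(b^+)^{\top}x\ge 0$.

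Next I would bound $(b^-)^{\top}x$ from above using the sparsity and spherical constraints simultaneously. Letting $S$ denote the support of $x$, we have $|S|\le p$, and Cauchy--Schwarz on $S$ gives $(b^-)^{\top}x=\sum_{i\in S}b_i^- x_i\le \|b^-_S\|\,\|x\|=\|b^-_S\|$, where $b^-_S$ is $b^-$ restricted to $S$. Because $\sum_{i\in S}(b_i^-)^2$ is maximized, over all $S$ with $|S|\le p$, by taking $S$ to index the $p$ largest entries of $b^-$, we obtain $\|b^-_S\|\le \|b^-_p\|$ for every admissible support, and hence $b^{\top}x\ge -\|b^-_p\|$ uniformly over the feasible set.

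I would then split into the two cases of the statement. When $b^-\neq 0$, the candidate $u=b^-_p/\|b^-_p\|$ is feasible—it is nonnegative, has unit norm, and has at most $p$ nonzeros—and a direct computation on the support of $b^-_p$, where $b_i=-b_i^-$, yields $b^{\top}u=-\|b^-_p\|^2/\|b^-_p\|=-\|b^-_p\|$, matching the lower bound and certifying optimality. When $b^-=0$, i.e. $b\ge 0$, the bound $-\|b^-_p\|=0$ is no longer tight, so I would argue separately: for $x\ge 0$ with $\|x\|=1$ one has $\|x\|_1\ge\|x\|_2=1$, whence $b^{\top}x\ge(\min_j b_j)\|x\|_1\ge \min_j b_j=b^{\top}e_{j_0}$, and $e_{j_0}$ is feasible because $p\ge 1$.

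The main subtlety I anticipate is precisely the interface between the two cases: the clean Cauchy--Schwarz certificate $-\|b^-_p\|$ collapses to $0$ once $b^-=0$ and then fails to identify the optimum, so the all-nonnegative regime genuinely needs the separate $\ell_1$-versus-$\ell_2$ comparison rather than a single unified bound. A secondary point worth stating explicitly is the claim that restricting the support to the indices of the top $p$ entries of $b^-$ maximizes $\|b^-_S\|$ among all $S$ with $|S|\le p$; this is just monotonicity of the sum of the largest squared coordinates, but it is the step that ties the sparsity constraint to the truncation $b^-_p$ appearing in the formula.
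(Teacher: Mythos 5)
Your proof is correct and follows essentially the same route as the paper's: decompose $b=b^+-b^-$, drop the nonnegative part, bound $(b^-)^{\top}x$ by the norm of the top-$p$ truncation, and treat $b\ge 0$ separately; your version actually spells out the Cauchy--Schwarz/top-$p$ step that the paper leaves implicit. The only cosmetic difference is in the $b^-=0$ case, where you use $\|x\|_1\ge\|x\|_2=1$ while the paper uses $x_i\ge x_i^2$; both are equivalent elementary bounds yielding $b^{\top}x\ge\min_j b_j$.
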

\begin{proof} When $b^-\neq0$, for $\forall u\in\R^k_+, \|u\|^2 = 1, \|u\|_0\leq p$, we obtain
\begin{eqnarray*}
	b^{\top}u & = & (b^+-b^-)^{\top}u \geq  -(b^-)^{\top}u  \geq -\|b_p^-\|,
\end{eqnarray*}
where the equality is achieved at $u = \frac{b_p^-}{\|b_p^-\|}$. 

When $b^-=0$, i.e., $b\geq0,$ we have
\begin{eqnarray}
b^{\top}u = \sum_i b_iu_i \geq \sum_i b_iu_i^2 \geq  b_{\min}\sum_i u_i^2 = b_{\min}, \nonumber
\end{eqnarray}
where $b_{\min} = \min_j b_j$, and the equality is achieved at $u = e_{j_0}, j_0 =
\arg\min_j b_j.$ \end{proof}

Our proximal RBR algorithm is outlined in Algorithm \ref{alg:RBR}.

\begin{algorithm2e}[H]\label{alg:RBR} \caption{A Row-By-Row (RBR) Algorithm} 
	Give $U^0$, $\sigma>0$. Set $t = 0$.\\
	\While{Not converging }{
		\For{i = 1,...,n}{
		$u_i^{t+1} = \arg\min_{x\in\cU_i}f(u_1^{t+1},...,u_{i-1}^{t+1},x,u_{i+1}^t,...,u_n^t)+\frac{\sigma}{2}\|x-u_i^t\|^2$.\\	
    	}
        $t \leftarrow t+1$.
	}
\end{algorithm2e}

Note that the objective function is smooth with a Lipschitz continuous gradient. When $p=k$, the $\ell_0$ constraints hold trivially and thus can be removed. Then problem \eqref{prob:CP_DCk} becomes a smooth nonlinear program. Therefore, the KKT condition can be written as
$$\begin{cases}
 2CU^*+2\Lambda U^{*}-V = 0,\\
\|u_i^{*}\| = 1, ~~~u_i^{*}\geq 0 , ~~~~ v_i\geq 0, & \mbox{ for } i = 1,...,n,\\ 
(u_i^{*})_j(v_i)_j = 0, & \mbox{ for } i = 1,...,n \mbox{ and } j = 1,...,k,
\end{cases}$$
where $\Lambda = {\rm diag}\{\lambda\}, V = \left(v_1,...,v_n\right)^{\top}$ and
$\lambda_i\in\R, v_i\in\R^k_+$ are the Lagrange multipliers associated with the
constraints $\|u_i\|^2=1, u_i\geq0$, respectively.  Then we have the following convergence result.

\begin{theorem}
	Suppose that the sequence $\{U^0,U^1,...,U^N\}$ is generated by Algorithm \ref{alg:RBR} with a chosen proximal parameter $\sigma>0$ and $p = k$. Let 
	\be
	\label{thm:convergence-0}
	t^* := \arg\min_{1\leq t\leq N}\bigg\{\|U^{t-1}-U^t\|_F^2\bigg\}.
	\ee
	Then there exist Lagrange multipliers $\lambda_i\in\R, v_i\in\R^k_+, i =
    1,...,n$, such that 
	$$\begin{cases}
	\big\|2CU^{t^*}+2\Lambda U^{t^*}-V\big\|_F\leq 2\sqrt{\frac{2\|C\|_1(4\|C\|_F^2+\sigma^2)}{N\sigma}},\\
	\|u_i^{t^*}\| = 1, ~~~u_i^{t^*}\geq 0 , ~~~~ v_i\geq 0, & \mbox{ for } i = 1,...,n,\\ 
	(u_i^{t^*})_j(v_i)_j = 0, & \mbox{ for } i = 1,...,n \mbox{ and } j = 1,...,k,
	\end{cases}$$
	where $\|C\|_1$ is the component-wise $\ell_1$ norm of $C$.
\end{theorem}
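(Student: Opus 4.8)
The plan is to exploit the \emph{exact} first-order optimality of each RBR subproblem to read off the multipliers $\lambda_i,v_i$, and then to show that the KKT residual of the full problem at the sweep-endpoint $U^{t^*}$ is nothing but a Gauss--Seidel-plus-proximal discrepancy controlled entirely by the one-sweep change $\|U^{t^*}-U^{t^*-1}\|_F$. A separate sufficient-decrease argument then forces the \emph{minimal} one-sweep change over $N$ sweeps to be $\mathcal{O}(1/N)$, and combining the two estimates yields the claimed bound.

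First I would write the stationarity condition of the $i$th subproblem. Since the proximal objective $g_i(x)=f(u_1^{t+1},\dots,x,\dots,u_n^t)+\frac{\sigma}{2}\|x-u_i^t\|^2$ has gradient differing from the vector $b$ of \eqref{prob:sphere} only by a multiple of $x$ (indeed $\nabla g_i(x)=b+(2C_{ii}+\sigma)x$), the minimizer furnished by the preceding lemma is a KKT point of $g_i$ on $\cU_i$: there exist $\lambda_i\in\R$ (for $\|x\|^2=1$) and $v_i\in\R^k_+$ (for $x\geq 0$) with $(u_i^{t+1})_j(v_i)_j=0$ and
\be\label{eq:subKKT}
2\sum_{j<i}C_{ij}u_j^{t+1}+2C_{ii}u_i^{t+1}+2\sum_{j>i}C_{ij}u_j^t+\sigma(u_i^{t+1}-u_i^t)+2\lambda_i u_i^{t+1}-v_i=0.
\ee
These are the multipliers in the theorem, and the relations $\|u_i^{t^*}\|=1$, $u_i^{t^*}\geq 0$, $v_i\geq 0$ and complementarity hold \emph{exactly}. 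Specializing \eqref{eq:subKKT} to the sweep producing $U^{t^*}$ from $U^{t^*-1}$ and subtracting it from the target $2\sum_j C_{ij}u_j^{t^*}+2\lambda_i u_i^{t^*}-v_i$, the $i$th row of $2CU^{t^*}+2\Lambda U^{t^*}-V$ collapses to $2\sum_{j>i}C_{ij}(u_j^{t^*}-u_j^{t^*-1})-\sigma(u_i^{t^*}-u_i^{t^*-1})$, all diagonal and already-updated contributions cancelling. Writing $\Delta=U^{t^*}-U^{t^*-1}$ and letting $L$ be the strictly upper-triangular part of $C$, the first sum is the $i$th row of $L\Delta$, whence $\|L\Delta\|_F\leq\|C\|_F\|\Delta\|_F$; with $(a+b)^2\leq 2(a^2+b^2)$ this yields $\|2CU^{t^*}+2\Lambda U^{t^*}-V\|_F^2\leq 2(4\|C\|_F^2+\sigma^2)\|\Delta\|_F^2$.

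It remains to bound $\|\Delta\|_F$. Because $u_i^{t+1}$ minimizes $g_i$ while $u_i^t$ is feasible, $g_i(u_i^{t+1})\leq g_i(u_i^t)$ gives the per-block decrease $f(\dots,u_i^{t+1},\dots)\leq f(\dots,u_i^t,\dots)-\frac{\sigma}{2}\|u_i^{t+1}-u_i^t\|^2$; telescoping across a full sweep yields $f(U^{t})\leq f(U^{t-1})-\frac{\sigma}{2}\|U^{t}-U^{t-1}\|_F^2$. Summing over $t=1,\dots,N$ and using that $0\leq (UU^\top)_{ij}=u_i^\top u_j\leq 1$ for any feasible $U$ (unit nonnegative rows), so that $|f(U)|\leq\|C\|_1$ and hence $f(U^0)-f(U^N)\leq 2\|C\|_1$, gives $\sum_{t=1}^N\|U^t-U^{t-1}\|_F^2\leq 4\|C\|_1/\sigma$. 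By the definition of $t^*$ as the minimizing index, $\|\Delta\|_F^2\leq 4\|C\|_1/(N\sigma)$, and substituting into the residual bound produces exactly $\|2CU^{t^*}+2\Lambda U^{t^*}-V\|_F\leq 2\sqrt{2\|C\|_1(4\|C\|_F^2+\sigma^2)/(N\sigma)}$.

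The main obstacle is the careful bookkeeping in \eqref{eq:subKKT}: one must track which rows are already updated ($j<i$, at iterate $t^*$) versus stale ($j>i$, still at $t^*-1$), so that subtracting from the full-gradient target leaves precisely the ``future-rows'' term $2\sum_{j>i}C_{ij}(u_j^{t^*}-u_j^{t^*-1})$ and the proximal term. A secondary subtlety is the norm split: bounding via $\|C\|_F$ on the strictly upper-triangular block and invoking $(a+b)^2\leq 2(a^2+b^2)$, rather than the triangle inequality, is what produces the exact constant $4\|C\|_F^2+\sigma^2$ instead of a spurious cross term.
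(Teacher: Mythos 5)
Your proposal is correct and follows essentially the same two-step route as the paper: exact subproblem KKT conditions furnish the multipliers, the full KKT residual at $U^{t^*}$ reduces to the ``stale rows plus proximal term'' discrepancy bounded by $2(4\|C\|_F^2+\sigma^2)\|U^{t^*}-U^{t^*-1}\|_F^2$, and the sufficient-decrease telescoping with $|f(U)|\leq\|C\|_1$ forces $\|U^{t^*}-U^{t^*-1}\|_F^2\leq 4\|C\|_1/(N\sigma)$. The only cosmetic difference is that you isolate the strictly upper-triangular part $L$ explicitly, whereas the paper bounds $\|2C_i(U^{t^*}-\tilde U^{t^*,i})\|$ row by row; both yield the identical constant.
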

\begin{proof} The proof consists mainly of two steps.  First, we show that $\sum_{t=1}^N\|U^t-U^{t-1}\|^2_F$ is moderately bounded. By the optimality of the subproblems, we have that for $ i = 1,...,n, $
\begin{equation*}
f(u_1^{t},...,u_{i-1}^{t},u_{i}^{t-1},...,u_n^{t-1})-f(u_1^{t},...,u_{i}^{t},u_{i+1}^{t-1},...,u_n^{t-1}) \geq \frac{\sigma}{2}\|u_{i}^{t}-u_{i}^{t-1}\|^2.
\end{equation*}
Summing these inequalities up over $i$ gives
$$
f(U^{t-1}) - f(U^{t}) \geq \frac{\sigma}{2}\|U^{t}-U^{t-1}\|_F^2,
$$
which yields
\be
\label{thm:convergence-1}
\sum_{t=1}^N\|U^t-U^{t-1}\|^2_F\leq\frac{2}{\sigma}\left(f(U^0)-f(U^N)\right).
\ee
Note that $|\langle u_i,u_j\rangle|\leq \|u_i\|\|u_j\| = 1$, and so for any feasible $U$,
\beaa
f(U) = \langle CU, U\rangle = \sum_{i,j} C_{i,j}\langle u_i,u_j\rangle\in[-\|C\|_1, \|C\|_1].
\eeaa
Combining with \eqref{thm:convergence-1} and the definition of $t^*$ in \eqref{thm:convergence-0}, it holds that
\be
\label{thm:convergence-2}
\|U^{t^*}-U^{t^*-1}\|^2_F\leq\frac{4}{N\sigma}\|C\|_1.
\ee

Second, also by the optimality of the subproblems, there exist KKT multipliers $\lambda_i\in\R$ and $v_i\in\R^k_+$ for each subproblem such that, 
\be
\label{thm:convergence-subkkt}
\begin{cases}
\nabla_if(u_1^{{t^*}},...,u_i^{t^*},u_{i+1}^{{t^*}-1},...,u_n^{{t^*}-1})+\sigma(u_i^{t^*}-u_i^{{t^*}-1})+2\lambda_iu_i^{t^*}-v_i = 0,\\
\|u_i^{t^*}\|^2 = 1, u_i^{t^*}\geq0, v_i\geq0,\\
(u_i^{t^*})_j(v_i)_j = 0, \mbox{ for } j = 1,...,k.
\end{cases}
\ee
Define $\tilde{U}^{{t^*},i} = \left[u_1^{t^*},...u_i^{t^*},u_{i+1}^{{t^*}-1},...,u_n^{{t^*}-1}\right]^{\top}$, then  
\bea
& & \|\nabla_if(U^{t^*})-\nabla_if(u_1^{{t^*}},...,u_i^{t^*},u_{i+1}^{{t^*}-1},...,u_n^{{t^*}-1})\|^2 \nonumber\\
& = & \|2C_i\left(U^{t^*} - \tilde{U}^{{t^*},i}\right)\|^2 \nonumber \\
& \leq & 4\|C_i\|^2\|U^{t^*} - \tilde{U}^{{t^*},i}\|_F^2\\
& \leq & 4\|C_i\|^2\|U^{t^*} - {U}^{{t^*}-1}\|_F^2,\nonumber
\eea
where $C_i$ is the $i$th row of matrix $C$. Consequently,
\bea
\label{thm:convergence-main}
& & \big\|2CU^{t^*}+2\Lambda U^{t^*}-V\big\|_F^2 \nonumber\\
& = & \sum_{i=1}^n \big\|\nabla_if(U^{t^*})+2\lambda_i u_i^{t^*}-v_i\big\|^2 \nonumber\\
& = & \sum_{i=1}^n\big\|\nabla_if(U^{t^*}) -\nabla_if(u_1^{{t^*}},...,u_i^{t^*},u_{i+1}^{{t^*}-1},...,u_n^{{t^*}-1})-\sigma(u_i^{t^*}-u_i^{{t^*}-1})\big\|^2\nonumber\\
& \leq &2\sum_{i=1}^n(\|\nabla_if(U^{t^*})
-\nabla_if(u_1^{{t^*}},...,u_i^{t^*},u_{i+1}^{{t^*}-1},...,u_n^{{t^*}-1})\|^2+\sigma^2\|u_i^{t^*}-u_i^{{t^*}-1}\|^2)\\
& \leq & 2(4\|C\|_F^2+ \sigma^2)\|U^{t^*}-U^{t^*-1}\|_F^2 \nonumber\\
& \leq & \frac{8\|C\|_1(4\|C\|_F^2+\sigma^2)}{T\sigma},\nonumber
\eea
where the second equality is due to \eqref{thm:convergence-subkkt} and the last inequality is due to \eqref{thm:convergence-2}. 
Combining the bounds in \eqref{thm:convergence-main} and \eqref{thm:convergence-subkkt} proves the theorem.
\end{proof}

\subsection{An Asynchronous Parallel Proximal RBR Scheme}
In this subsection, we briefly discuss the parallelization of the RBR Algorithm \ref{alg:RBR}. 
The overall parallel setup for the algorithm is a shared memory model with many threads. The variable $U$ is in the shared memory so that it can be accessed by all threads. The memory locking is not imposed throughout the process. Namely, even when a thread is updating some row $u_i$ of $U$, the other threads can still access $U$ whenever needed.


Before explaining the parallel implementation, let us first make clear the sequential case. The main computational cost of updating one row $u_i$ by solving the subproblem (\ref{prob:sphere}) is due to the computation of $b =
2C_{-i}^iU_{-i}-\sigma\bar{u}_i$, where $\bar{u}_i$ and $U$ are the current iterates. Note that the definition of $C$ in \eqref{eq:defC} yields
\be \label{eq:b-update} b^{\top} = -2A_{-i}^iU_{-i}+2\lambda d_id_{-i}^{\top}U_{-i}-\sigma\bar{u}_i,\ee
where $A_{-i}^i$ is the $i$th row of $A$ without the $i$th component. By the
sparsity of $A$, computing $-A_{-i}^iU_{-i}$ takes $\mathcal{O}(d_ip)$ flops. As
for the term $\lambda d_id_{-i}^{\top}U_{-i}$, we can compute the vector $d^{\top}U =
\sum_{i = 1}^{|V|}d_iu_i^{\top}$ once, then store it throughout all iterations. For each time after a row $u_i$ is updated, we can evaluate this vector at a cost of
$\mathcal{O}(p)$ flops. Hence, the total computational cost of one full sweep of all rows is $\mathcal{O}(2(k+p)|V|+p|E|)$.

 Our parallel implementation is outlined in  Algorithm \ref{alg:pRBR} where many
 threads working at the same time. The vector $d^{\top}U$ and matrix $U$ are
 stored in the \emph{shared} memory and they can be accessed and updated by all
 threads. Each thread picks up one row $u_i$ at a time and then it reads $U$ and
 the vector $d^{\top}U$. Then an individual copy of the vector $b^{\top}$ is
 calculated, i.e., $b^{\top}$ is owned \emph{privately} and cannot be accessed
 by other threads. Thereafter, the variable  $u_i$ is updated and $d^{\top}U$ is
 set  as $d^{\top}U \leftarrow d^{\top}U + d_i(u_i-\bar{u}_i)$ in the shared
 memory. Immediately without waiting for other threads to finish their
 computation, it proceeds to another row. Hence, unlike the situation in
 Algorithm \ref{alg:RBR},  other blocks of variables $u_j,j\neq i$ are not
 necessarily up to date when a thread is updating a row $u_i$ and $d^{\top}U$.
 Moreover, if  another thread is just modifying some row $u_j$ or the vector
 $d^{\top}U$ when this thread is reading these memories, it will make the update
 of $u_i$ with the partially updated data. A possible way to avoid the partially
 updated conflict is to add memory locking when a thread is changing the
 memories of $U$ and $d^{\top}U$. In this way,   other threads will not be able
 to access these memories and will wait until the update completes. However, this process may cause many cores to be idle so that it only provides limited speedups or even slows the algorithm down. In our implementation, these memory locking are completely removed.  Consequently, our method may be able to provide near-linear speedups.

In order to increase the chance of finding a good solution, we perform the
simple  rounding scheme \eqref{rounding} in Section \ref{sec:rounding} whenever it is
necessary.  
By applying this technique, we obtain a binary solution after each
iteration as a valid community partition. Although the convergence is not guaranteed in this case,
it is numerically robust throughout our experiments. Note that the rounding technique can also be asynchronously parallelized because the update of all rows are completely independent. 

It is worth noting that our algorithm is similar to the HOGWILD! algorithm \cite{AsynParallel-Hogwild-2011}, where HOGWILD! minimizes the objective function in a stochastic gradient descend style while we adopt the exact block coordinate minimization style. There are no conflicts in our method when updating the variable $U$ compared to the HOGWILD!, since each row can only be modified by up to one thread. However, the issue of conflicts arises when updating the intermediate vector $d^{\top}U$. Another similar asynchronous parallel framework is CYCLADES \cite{AsynParallel-Cyclades-2016}. Consider a ``variable conflict graph'', of which the nodes are grouped as blocks of variables and an edge between two blocks of variables exists if they are coupled in the objective function. The CYCLADES carefully utilizes the sparsity of the variable conflict graph and designs a particular parallel updating order so that no conflict emerges due to the asynchronism with high probability. This enables the algorithm to be asynchronously implemented while keeping the sequence of iterates equivalent to that generated by a sequential algorithm. However, because the block variables are all coupled together in formulation \eqref{prob:CP_DCk}, no variable sparsity can be exploited in our case. 

\begin{algorithm2e}[H]\caption{Asynchronous parallel RBR algorithm}
	\label{alg:pRBR}
	Give $U^0$, set $t = 0$\\
	\While{Not converging }{
		\For{each row $i$ asynchronously}{
			Compute the vector $b_i^{\top} = -2A_{-i}^iU_{-i}+2\lambda
			d_id_{-i}^{\top}U_{-i}-\sigma u_i$, and save previous iterate  $\bar{u}_i$ in the private memory.\\
			Update   $u_i \leftarrow
			\argmin_{x\in\mathcal{U}_i}b_i^{\top}x$ in the shared memory. \\
			Update the vector $d^{\top}U \leftarrow d^{\top}U + d_i(u_i-\bar{u}_i)$ in the shared memory.
		}
		\If{rounding is activated}{
			\For{each row $i$ asynchronously}{
				Set $u_i=e_{j_0}$ where $j_0=\arg\max (u_i)_j$.\\
			}
			Compute and update $d^{\top}U$.
		}
	}
\end{algorithm2e}

\section{Theoretical Error Bounds Under The DCSBM}\label{section:StatTheories}
In this section, we establish theoretical results of our method. Throughout the
discussion, we focus mainly on the DCSBM, as these assumptions and results for
the SBM can be easily derived from those of the DCSBM as special cases. Define 
$$G_a = \sum_{i\in C_a^*}\theta_i,~~~~ J_a = \sum_{i\in C_a^*}\theta_i^2,~~~~M_a = \sum_{i\in C_a^*}\theta_i^3,  ~~~~ n_a = |C_a^*|,$$
$$H_a = \sum_{b=1}^kB_{ab}G_b,\text{ and } f_i = H_a\theta_i, \text{ given } i\in C_a^*.$$
A direct computation gives 
$\mathbb{E}d_i = \theta_iH_a-\theta_i^2B_{aa}\approx f_i.$ 
For the ease of notation, we define $n_{\max} = \max_{1\leq a\leq k}n_a$, and
define $n_{\min}, H_{\max},H_{\min}, J_{\max}, J_{\min}, M_{\max}, M_{\min}$ in a
similar fashion. We adopt the \emph{density gap} assumption in
\cite{CMM-X.Li-2015} to guarantee the model's identifiability.
\begin{assumption}\label{assumption:densityGap} (Chen, Li and Xu, 2016,\cite{CMM-X.Li-2015})
Under the DCSBM, the density gap condition holds that 
	$$\max_{1\leq a<b\leq k}\frac{B_{ab}}{H_aH_b}<\min_{1\leq a\leq k}\frac{B_{aa}}{H_a^2}.$$	
\end{assumption}


Define 
$\|X\|_{1,\theta} = \sum_{ij}|X_{ij}|\theta_i\theta_j$ as the weighted $\ell_1$ norm. Then we have the following lemma which bounds the difference between the approximate partition matrix $U^*(U^*)^{\top}$ and the true partition matrix $\Phi^*(\Phi^*)^{\top}$, where $U^*$ is the optimal solution to problem \eqref{prob:CP_DCk}.

\begin{lemma}\label{lemma:cmm-translate}
	Suppose that Assumption \ref{assumption:densityGap} holds and the tuning parameter $\lambda$ satisfies 
	\be
	\label{lm:cmm-translate-1}
	\max_{1\leq a<b\leq k}\frac{B_{ab}+\delta}{H_aH_b}<\lambda<\min_{1\leq a\leq k}\frac{B_{aa}-\delta}{H_a^2}
	\ee
	for some $\delta>0$. Let $U^*$ be the global optimal solution to problem
    (\ref{prob:CP_DCk}), and define $\Delta = U^*(U^*)^{\top}-\Phi^*(\Phi^*)^{\top}$. Then
    with probability at least $0.99-2(e/2)^{-2n}$, we have 
	$$\|\Delta\|_{1,\theta}\leq \frac{C_0}{\delta}\left(1+\left(\max_{1\leq a\leq k}\frac{B_{aa}}{H_a^2}\|f\|_1\right)\right)(\sqrt{n\|f\|_1}+n),$$
	where $C_0>0$ is some absolute constant that does not depend on problem
    scale and parameter selections.
\end{lemma}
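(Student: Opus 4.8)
The plan is to translate the statement into the convex-relaxation framework of \cite{CMM-X.Li-2015}, using the nesting of feasible sets recorded in Section \ref{sec:intro}, and then to run the deterministic-plus-random decomposition underlying the CMM error bound, specialized to the DCSBM.

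First I would record two elementary feasibility facts. The true assignment matrix $\Phi^*$ is feasible for (\ref{prob:CP_DCk}): each of its rows is a unit coordinate vector, hence nonnegative, of unit $\ell_2$ norm, and of $\ell_0$ norm $1\le p$. Conversely, for the global optimizer $U^*$ the lift $X^*:=U^*(U^*)^\top$ is feasible for the SDP (\ref{prob:SDP}), since $X^*_{ii}=\|u_i^*\|^2=1$, $X^*\succeq 0$ as a Gram matrix, $X^*_{ij}=\langle u_i^*,u_j^*\rangle\ge 0$ by $U^*\ge 0$, and $X^*_{ij}\le\|u_i^*\|\|u_j^*\|=1$ by Cauchy--Schwarz. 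In particular every entry of $\Delta=X^*-\Phi^*(\Phi^*)^\top$ obeys $|\Delta_{ij}|\le 1$, with $\Delta_{ii}=0$. Because $U^*$ is a global minimizer while $\Phi^*$ is feasible, I obtain the objective inequality
$$\langle C,\Delta\rangle=\langle C,X^*\rangle-\langle C,\Phi^*(\Phi^*)^\top\rangle\le 0.$$
This is precisely the inequality that initiates the CMM analysis; there it is applied to the SDP optimizer, whereas here it holds for our (generally SDP-suboptimal) $X^*$. The point worth emphasizing is that the CMM argument uses only SDP-feasibility of its candidate together with this objective inequality, never SDP-optimality, so the whole derivation transfers once these two facts are in hand.

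Next I would split $C=-(A-\lambda dd^\top)$ into deterministic and fluctuating parts. Writing $\bar A=\mathbb{E}A$ (entrywise $\theta_i\theta_jB_{ab}$ for $i\in C_a^*$, $j\in C_b^*$) and letting $f$ be the expected-degree vector, I decompose
$$\langle C,\Delta\rangle=\langle -\bar A+\lambda ff^\top,\Delta\rangle-\langle A-\bar A,\Delta\rangle+\lambda\langle dd^\top-ff^\top,\Delta\rangle.$$
In the first (signal) term the coefficient of $\Delta_{ij}$ on block $(a,b)$ equals $\theta_i\theta_j(\lambda H_aH_b-B_{ab})$; on diagonal blocks $\Delta_{ij}\le 0$ while $\lambda H_a^2-B_{aa}<-\delta$, and on off-diagonal blocks $\Delta_{ij}\ge 0$ while $\lambda H_aH_b-B_{ab}>\delta$, both margins coming exactly from the admissible range (\ref{lm:cmm-translate-1}). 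Hence each contribution has a favorable sign, $\Delta_{ii}=0$ kills the diagonal, and the signal term is bounded below by $\delta\sum_{ij}\theta_i\theta_j|\Delta_{ij}|=\delta\|\Delta\|_{1,\theta}$. Combined with $\langle C,\Delta\rangle\le 0$ this gives
$$\delta\|\Delta\|_{1,\theta}\le|\langle A-\bar A,\Delta\rangle|+\lambda|\langle dd^\top-ff^\top,\Delta\rangle|.$$

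It remains to control the two noise terms with high probability, which is where the DCSBM concentration enters and which I expect to be the main obstacle. For the adjacency fluctuation I would bound $|\langle A-\bar A,\Delta\rangle|$ using $|\Delta_{ij}|\le 1$ together with a spectral/Grothendieck-type estimate on $A-\bar A$; in expectation this is of order $\sqrt{\|f\|_1}$ (the largest expected degree), and a Markov step converts it to a $0.99$-probability bound of the advertised order $\sqrt{n\|f\|_1}+n$. For the degree term I would expand $dd^\top-ff^\top=(d-f)f^\top+f(d-f)^\top+(d-f)(d-f)^\top$ and invoke a Bernstein/Chernoff concentration for $\|d-f\|$, whose failure probability furnishes the $2(e/2)^{-2n}$ term after a union bound over nodes, while the prefactor $\lambda\le\max_aB_{aa}/H_a^2$ paired with $\|f\|_1$ produces the factor $\big(1+\max_a\frac{B_{aa}}{H_a^2}\|f\|_1\big)$. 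Dividing by $\delta$ and combining the two failure probabilities yields the claim with an absolute constant $C_0$. The delicate points are the sharp spectral control of $A-\bar A$ for sparse, heterogeneous graphs and keeping the degree-fluctuation estimate in the weighted geometry $\|\cdot\|_{1,\theta}$ rather than an unweighted norm; both are established in \cite{CMM-X.Li-2015}, so I would cite them directly, verifying only that the hypotheses there match Assumption \ref{assumption:densityGap} and the $\lambda$-range (\ref{lm:cmm-translate-1}).
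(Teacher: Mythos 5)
Your proposal is correct and follows essentially the same route as the paper's proof: both start from $\langle C,\Delta\rangle\le 0$ (optimality of $U^*$ versus feasibility of $\Phi^*$), use the SDP-feasibility of $U^*(U^*)^\top$ to get the sign pattern $0\le X^*_{ij}\le 1$ and hence the block-wise signs of $\Delta$, split $C$ into the signal term $\lambda ff^\top-\mathbb{E}A$ plus the two fluctuation terms, lower-bound the signal by $\delta\|\Delta\|_{1,\theta}$ via the density-gap window for $\lambda$, and import the probabilistic bounds on $\langle A-\mathbb{E}A,\Delta\rangle$ (Grothendieck) and $\lambda\langle dd^\top-ff^\top,\Delta\rangle$ from \cite{CMM-X.Li-2015}. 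Your explicit observation that the CMM argument uses only SDP-feasibility plus the objective inequality, never SDP-optimality, is exactly the point the paper relies on when transferring Theorem 1 of \cite{CMM-X.Li-2015} to the nonconvex optimizer.
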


\begin{proof} Lemma \ref{lemma:cmm-translate} is proved directly from Theorem 1
in \cite{CMM-X.Li-2015} for the convex SDP relaxation \eqref{prob:SDP}. Here we
only show the main steps of the proof  and how they can be translated from the original result of  Chen, Li and Xu in \cite{CMM-X.Li-2015}.  By the optimality of $U^*$ and the feasibility of $\Phi^*$ to problem \eqref{prob:CP_DCk}, we have
\bea
\label{lm:cmm-translate-0}
0 & \leq &  \langle  \Delta,A-\lambda dd^{\top}\rangle
 =  \underbrace{\langle\Delta, \mathbb{E}A-\lambda ff^{\top}\rangle}_{S_1}+ \underbrace{\lambda\langle\Delta,ff^{\top}-dd^{\top}\rangle}_{S_2} + \underbrace{\langle\Delta, A-\mathbb{E}A\rangle}_{S_3}.
\eea
The remaining task is to bound the three terms separately.

First, we bound the term $S_1$. Note that $U^*(U^*)^{\top}$ is feasible to the
SDP problem \eqref{prob:SDP}, we still have for $\forall i,j\in C^*_a, a\in[k]$,
$\Delta_{ij}\leq 0$; for $\forall i\in C^*_a, \forall j\in C^*_b, a\neq b,
a,b\in[k], \Delta_{ij}\geq 0$. Combining with \eqref{lm:cmm-translate-1}, the
proof in \cite{CMM-X.Li-2015} is still valid and yields 
$$\Delta_{ij}(\mathbb{E}A-\lambda ff^{\top})_{ij}\leq -\delta\theta_i\theta_j|\Delta_{ij}|.$$
Hence we have 
$$S_1\leq -\delta\|\Delta\|_{1,\theta}.$$
Similarly, the proof goes through for the bounds of $S_2$ and $S_3$. It can be shown that
$$S_2\leq C\left(\max_{1\leq a\leq k}\frac{B_{aa}}{H_a^2}\right)\|f\|_1\left(\sqrt{n\|f\|_1}+n\right)$$
holds with probability at least 0.99, where $C>0$ is some absolute constant. 
Let $K_G \leq 1.783$ be the constant for the original Grothendieck's inequality. Then 
$$S_3\leq 2K_G\sqrt{8n\|f\|_1}+\frac{16K_G}{3}n$$
holds with probability at least $1-2(e/2)^{-2n}$. Combining the bounds for
$S_1,S_2,S_3$ with the inequality \eqref{lm:cmm-translate-0} proves the theorem.
\end{proof}

Lemma \ref{lemma:cmm-translate} indicates that $U^*(U^*)^{\top}$ is close to
$\Phi^*(\Phi^*)^{\top}$. Naturally, we want to bound the difference between the
factors $U^*$ and $\Phi^*$. However, note that for any orthogonal matrix $Q$,
$(U^*Q)(U^*Q)^{\top} = U^*(U^*)^{\top}$. This implies that in general $U^*$ is not
necessarily close to $\Phi^*$ unless multiplied by a proper orthogonal matrix.
To establish this result, we use a matrix perturbation lemma 
in \cite{Book-Matrix-2012}.
\begin{lemma}\label{lemma:perturb}(Corollary 6.3.8, in \cite{Book-Matrix-2012} on page 407)
	Let $A,E\in\mathbb{C}^{n\times n}$. Assume that $A$ is Hermitian and $A+E$ is normal. Let $\lambda_1\leq\lambda_2\leq\cdots\leq\lambda_n$ be the eigenvalues of $A$, and let $\hat{\lambda}_1, \hat{\lambda}_2,...,\hat{\lambda}_n$ be the eigenvalues of $A+E$ arranged in order $Re(\hat{\lambda}_1)\leq Re(\hat{\lambda}_2)\leq\cdots\leq Re(\hat{\lambda}_n)$. Then 
	$$\sum_{i = 1}^n\|\lambda_i-\hat{\lambda}_i\|^2\leq\|E\|_F^2,$$
	where $\|\cdot\|_F$ is the matrix Frobenius norm.
\end{lemma}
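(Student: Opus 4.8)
The plan is to recognize this statement as a corollary of the \emph{Hoffman--Wielandt inequality}, which asserts that for any two normal matrices $A$ and $A+E$ with eigenvalues $\{\lambda_i\}$ and $\{\hat\lambda_i\}$ listed in any fixed order, there exists a permutation $\sigma$ of $\{1,\ldots,n\}$ such that $\sum_{i=1}^n |\lambda_i-\hat\lambda_{\sigma(i)}|^2 \le \|E\|_F^2$. Since $A$ is Hermitian and $A+E$ is normal by hypothesis, both matrices are normal, so the Hoffman--Wielandt inequality applies and furnishes some matching permutation $\sigma$. The only work left is then to show that the \emph{specific} pairing stipulated in the lemma --- sorting the (real) eigenvalues of $A$ increasingly against the eigenvalues of $A+E$ ordered by increasing real part --- is at least as good as any other matching, and in particular as good as $\sigma$.

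To carry this out I would write $\hat\lambda_j = a_j + i\,b_j$ with $a_j=\mathrm{Re}(\hat\lambda_j)$ and $b_j=\mathrm{Im}(\hat\lambda_j)$, and use that each $\lambda_i$ is real because $A$ is Hermitian. Then for every permutation $\pi$,
\[
\sum_{i=1}^n |\lambda_i-\hat\lambda_{\pi(i)}|^2 = \sum_{i=1}^n (\lambda_i-a_{\pi(i)})^2 + \sum_{i=1}^n b_{\pi(i)}^2 .
\]
The crucial observation is that the second sum equals $\sum_{j=1}^n b_j^2$ regardless of $\pi$, so the imaginary parts decouple and contribute a constant. Hence minimizing the left-hand side over $\pi$ is equivalent to minimizing $\sum_{i}(\lambda_i-a_{\pi(i)})^2$, and after expanding the square and discarding the $\pi$-independent terms $\sum_i\lambda_i^2$ and $\sum_j a_j^2$, this is in turn equivalent to \emph{maximizing} $\sum_{i}\lambda_i\,a_{\pi(i)}$.

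By the rearrangement inequality, $\sum_i \lambda_i a_{\pi(i)}$ is maximized precisely when the sequence $(a_{\pi(i)})$ is sorted in the same increasing order as $(\lambda_i)$, which is exactly the ordering imposed in the lemma, namely $\mathrm{Re}(\hat\lambda_1)\le\cdots\le\mathrm{Re}(\hat\lambda_n)$ matched against $\lambda_1\le\cdots\le\lambda_n$. Consequently the stipulated pairing minimizes $\sum_i |\lambda_i-\hat\lambda_{\pi(i)}|^2$ over all permutations, so in particular
\[
\sum_{i=1}^n |\lambda_i-\hat\lambda_i|^2 \le \sum_{i=1}^n |\lambda_i-\hat\lambda_{\sigma(i)}|^2 \le \|E\|_F^2,
\]
which is the claim.

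The genuine content of the lemma lies entirely in the Hoffman--Wielandt inequality itself, whose proof relies on the Birkhoff--von Neumann theorem that the doubly stochastic matrices have the permutation matrices as their extreme points, applied to the doubly stochastic matrix formed from the squared moduli of the entries of the unitary transition matrix between the two eigenbases. That is where I would expect the real difficulty to sit. Since the excerpt cites the statement as a textbook corollary, I would invoke Hoffman--Wielandt as an established result and treat only the rearrangement reduction above as needing verification; that reduction is elementary once one notices that the imaginary parts of the eigenvalues of $A+E$ separate out into a permutation-invariant constant.
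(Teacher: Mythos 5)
Your argument is correct. Note, however, that the paper does not prove this lemma at all --- it is quoted verbatim as Corollary 6.3.8 of Horn and Johnson's \emph{Matrix Analysis} and used as an imported black box --- so there is no in-paper proof to compare against. What you have written is essentially the standard textbook derivation of that corollary: invoke the Hoffman--Wielandt theorem for the pair of normal matrices $A$ and $A+E$ to obtain some matching permutation $\sigma$, then observe that because the $\lambda_i$ are real and the imaginary parts $b_{\pi(i)}^2$ sum to a permutation-invariant constant, the optimal matching reduces via the rearrangement inequality to sorting the real parts of the $\hat\lambda_j$ against the increasingly ordered $\lambda_i$, which is exactly the pairing stipulated in the statement. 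Both steps are sound, so the proof is complete given Hoffman--Wielandt, which you are entitled to assume since the lemma is explicitly presented as a corollary of a cited textbook result.
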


In the real symmetric case, this lemma states that we can bound the difference between every eigenvalue of $A$ and perturbed matrix $A+E$ if the error term $\|E\|_F^2$ can be properly bounded. 
\begin{theorem}\label{theorem:dcsbm-Frobenius-err}
	Let $\Theta = {\rm diag}\{\theta_1,...,\theta_n\}$, and let $U^*, \Phi^*$, $\Delta$ be defined according to Lemma \ref{lemma:cmm-translate}. Then there exist orthogonal matrices $Q_1^*$ and $Q_2^*$ such that  
	$$\|\Theta(U^*Q_1^*-\Phi^*)\|_F^2\leq2\sqrt{k}\frac{M_{\max}}{J_{\min}}\|\Delta\|_{1,\theta}^{\half},$$
	$$\|U^*Q_2^*-\Phi^*\|_F^2\leq2\sqrt{k}\frac{n_{\max}}{n_{\min}}\|\Delta\|_{1}^{\half}\leq2\sqrt{k}\frac{n_{\max}}{\theta_{\min}n_{\min}}\|\Delta\|_{1,\theta}^{\half}.$$
\end{theorem}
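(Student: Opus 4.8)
The plan is to recover the factors from the Gram-matrix estimate of Lemma \ref{lemma:cmm-translate} by an orthogonal Procrustes argument combined with the Hoffman--Wielandt-type eigenvalue bound of Lemma \ref{lemma:perturb}. I would prove the first (weighted) inequality in full and obtain the second by running the identical argument with the weights $\theta_i$ removed. Set $P=\Theta U^*$ and $R=\Theta\Phi^*$, so that $PP^\top-RR^\top=\Theta\Delta\Theta$, and note that $\|\Theta(U^*Q_1^*-\Phi^*)\|_F^2=\|PQ_1^*-R\|_F^2$. Minimizing over orthogonal $Q_1^*$ and using the classical Procrustes identity gives $\min_{Q^\top Q=I}\|PQ-R\|_F^2=\|P\|_F^2+\|R\|_F^2-2\|R^\top P\|_*$, where $\|\cdot\|_*$ is the nuclear norm. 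Since every $u_i$ and every $\phi_i^*$ is a unit vector, $\|P\|_F^2=\|R\|_F^2=\sum_i\theta_i^2=\sum_{a}J_a=:\tr(X)$, so the whole task reduces to upper bounding $\tr(X)-\|R^\top P\|_*$.

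Next I would pass to the $k\times k$ Gram matrices $G:=R^\top PP^\top R=(R^\top P)(R^\top P)^\top$ and $G_0:=R^\top RR^\top R=(R^\top R)^2={\rm diag}(J_1^2,\dots,J_k^2)$, both PSD. The eigenvalues $\mu_i$ of $G$ are exactly the squared singular values of $R^\top P$, so $\|R^\top P\|_*=\sum_i\sqrt{\mu_i}$, whereas $\tr(X)=\sum_a J_a$ is the sum of the square roots of the eigenvalues of $G_0$. Pairing the sorted eigenvalues and applying $|\sqrt a-\sqrt b|=|a-b|/(\sqrt a+\sqrt b)$ with the spectral lower bound $\sigma_{\min}^2(R)=J_{\min}$ on the denominator yields $\tr(X)-\|R^\top P\|_*\le J_{\min}^{-1}\sum_i|\mu_i-\lambda_i(G_0)|$. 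A Cauchy--Schwarz step over the $k$ indices, followed by Lemma \ref{lemma:perturb} applied to $G_0$ and $G=G_0+E$ with $E:=R^\top(\Theta\Delta\Theta)R$, gives $\sum_i|\mu_i-\lambda_i(G_0)|\le\sqrt k\,\|E\|_F$; this is precisely where the factor $\sqrt k$ enters.

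It then remains to bound $\|E\|_F$ so as to produce both $M_{\max}$ and the square root of $\|\Delta\|_{1,\theta}$. Here $E_{ab}=\sum_{i\in C_a^*,\,j\in C_b^*}\theta_i^2\theta_j^2\Delta_{ij}$, and the key maneuver is to split $\theta_i^2\theta_j^2=(\theta_i^{3/2}\theta_j^{3/2})(\theta_i^{1/2}\theta_j^{1/2})$ \emph{before} applying Cauchy--Schwarz to each entry, which produces the factor $\sqrt{M_aM_b}$ rather than the cruder $\sqrt{J_aJ_b}$. Using $|\Delta_{ij}|\le1$ (valid since $\langle u_i,u_j\rangle\in[0,1]$ and the partition entries lie in $\{0,1\}$), hence $\Delta_{ij}^2\le|\Delta_{ij}|$, one gets $\|E\|_F^2\le M_{\max}^2\sum_{ij}\theta_i\theta_j|\Delta_{ij}|=M_{\max}^2\|\Delta\|_{1,\theta}$. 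Chaining the three displays gives $\min_{Q^\top Q=I}\|PQ-R\|_F^2=2(\tr(X)-\|R^\top P\|_*)\le 2\sqrt k\,(M_{\max}/J_{\min})\|\Delta\|_{1,\theta}^{\half}$, the first claim. For the second inequality I would repeat everything with $P=U^*$, $R=\Phi^*$: now $G_0={\rm diag}(n_1^2,\dots,n_k^2)$, $\sigma_{\min}^2(R)=n_{\min}$, and the same Cauchy--Schwarz gives $\|E\|_F\le n_{\max}\|\Delta\|_1^{\half}$, whence the $n_{\max}/n_{\min}$ bound; the last inequality follows from $\|\Delta\|_1\le\theta_{\min}^{-2}\|\Delta\|_{1,\theta}$.

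The main obstacle is the transition from Gram-level to factor-level closeness: because $U^*$ is determined only up to an orthogonal rotation, one cannot compare $U^*$ and $\Phi^*$ directly, and the whole argument hinges on reducing to the small $k\times k$ matrices $G,G_0$, on the square-root identity together with the spectral floor $J_{\min}$ (resp.\ $n_{\min}$), and on invoking Lemma \ref{lemma:perturb} to control the eigenvalue perturbation. A secondary subtlety is the weighting bookkeeping of the third step: the nonstandard split of $\theta_i^2\theta_j^2$ is what sharpens the expected $J_{\max}$ into $M_{\max}$, while the entrywise bound $|\Delta_{ij}|\le1$ is what converts $\|\Delta\|_{1,\theta}$ into its square root, matching exactly the exponent in the statement.
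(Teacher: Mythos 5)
Your proposal is correct and follows essentially the same route as the paper's proof: the Procrustes reduction to $2\bigl(\sum_i\theta_i^2-\|(U^*)^{\top}\Theta^2\Phi^*\|_*\bigr)$, passing to the $k\times k$ Gram matrices with unperturbed spectrum $\{J_a^2\}$, invoking Lemma \ref{lemma:perturb} on the perturbation $E=(\Phi^*)^{\top}\Theta^2\Delta\Theta^2\Phi^*$ with $\|E\|_F^2\leq M_{\max}^2\|\Delta\|_{1,\theta}$, and specializing to $\Theta=I$ for the second bound. Your only (harmless) deviations are cosmetic: you bound $\|E\|_F$ by an entrywise Cauchy--Schwarz split of $\theta_i^2\theta_j^2$ rather than the operator-norm inequality, and your identity $|\sqrt{a}-\sqrt{b}|=|a-b|/(\sqrt{a}+\sqrt{b})$ handles the square-root comparison a bit more cleanly than the paper's expansion, which must separately address the case $J_i^2-\delta_i<0$.
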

\begin{proof}
(i). We first prove the bound for $Q_1^*$. A direct calculation yields 
\bea
\label{thm:dcsbm-Frobenius-err:nuclear}
v(\theta) & := &  \min_{Q^{\top}Q=I} \|\Theta(U^*Q-\Phi^*)\|_F^2 \nonumber\\
& = & \min_{Q^{\top}Q=I} \|\Theta U^*Q\|_F^2 + \|\Theta\Phi^*\|_F^2 -2\iprod{\Theta U^*Q}{\Theta\Phi^*} \\
& = & 2\sum_{i = 1}^n\theta_i^2 - 2\max_{Q^{\top}Q=I}\iprod{Q}{(U^*)^{\top}\Theta^2\Phi^*} \nonumber \\
& = & 2\left(\sum_{i = 1}^n\theta_i^2-\|(U^*)^{\top}\Theta^2\Phi^*\|_*\right),\nonumber
\eea
where $\|\cdot\|_*$ denotes the nuclear norm of a matrix, and the last equality
is due to the following argument. Note that
\bea
Q_1^* & = & \arg\max_{Q^{\top}Q=I}\iprod{Q}{(U^*)^{\top}\Theta^2\Phi^*} \nonumber\\
& = & \arg\min_{Q^{\top}Q=I}\|Q-(U^*)^{\top}\Theta^2\Phi^*\|_F^2. \nonumber
\eea
Suppose that we have the singular value decomposition $(U^*)^{\top}\Theta^2\Phi^* = R\Sigma P^{\top},$
then $Q_1^* = RP^{\top}$ solves the above nearest orthogonal matrix problem. Hence, 
$$\iprod{Q_1^*}{(U^*)^{\top}\Theta^2\Phi^*} = {\rm Tr}(\Sigma) = \|(U^*)^{\top}\Theta^2\Phi^*\|_*.$$ 
For any matrix $Z$, denote by $\sigma_i(Z)$ its $i$th singular value, and $\lambda_i(Z)$ its $i$th eigenvector. For a positive semidefinite matrix $Z$, $\sigma_i(Z) = \lambda_i(Z)$. Then
\bea
\label{lm:factor_err_1}
\sigma_i^2((U^*)^{\top}\Theta^2\Phi^*) & = & \lambda_i((\Phi^*)^{\top}\Theta^2U^*(U^*)^{\top}\Theta^2\Phi^*) \nonumber\\
& = & \lambda_i((\Phi^*)^{\top}\Theta^2\Phi^*(\Phi^*)^{\top}\Theta^2\Phi^*+(\Phi^*)^{\top}\Theta^{1.5}\Delta_{\Theta}\Theta^{1.5}\Phi^*), 
\eea
where 
$\Delta_{\Theta} = \Theta^{\half}(U^*(U^*)^{\top}-\Phi^*(\Phi^*)^{\top})\Theta^{\half} =
\Theta^{\half}\Delta\Theta^{\half}.$ The theorem is proved by the following
three steps and by applying Lemma \ref{lemma:perturb} to bound every $\sigma_i^2((U^*)^{\top}\Theta^2\Phi^*)$.

\textbf{Step (a)}. We derive a tight estimation for every eigenvalue
$\lambda_i((\Phi^*)^{\top}\Theta^2\Phi^*(\Phi^*)^{\top}\Theta^2\Phi^*),$ $i = 1,...,k$.  Let
$v_a,1\leq a\leq k$, be the $a$th column of $\Phi^*$. Then we have
$$v_a(t) = \begin{cases}
1, & t\in C_a^*, \\
0, & t\notin C_a^*.
\end{cases}$$
Therefore, we obtain
\beaa
((\Phi^*)^{\top}\Theta^2\Phi^*)_{ab}  =  \sum_{t=1}^n\theta_t^2v_a(t)v_b(t)  =  \begin{cases}
0, & a\neq b,\\
J_a, & a = b,
\end{cases}
\eeaa
which yields $$((\Phi^*)^{\top}\Theta^2\Phi^*)^2 = {\rm diag}\{J_1^2,...,J_k^2\} \text{ and } \lambda_i\left(((\Phi^*)^{\top}\Theta^2\Phi^*)^2\right) = J_i^2.$$

\textbf{Step (b).}  We bound the error term $\|(\Phi^*)^{\top}\Theta^{1.5}\Delta_{\Theta}\Theta^{1.5}\Phi^*\|_F^2$ for (\ref{lm:factor_err_1}): 
\be \label{eq:w1}
\|(\Phi^*)^{\top}\Theta^{1.5}\Delta_{\Theta}\Theta^{1.5}\Phi^*\|_F^2 \leq   \|\Delta_{\Theta}\|_F^2\|\Theta^{1.5}\Phi^*(\Phi^*)^{\top}\Theta^{1.5}\|_2^2 
 \leq  \|\Delta\|_{1,\theta}M_{\max}^2, 
\ee
where the second inequality is due to the following argument. 

By the feasibility of $U^*$ and $\Phi^*$, it is not hard to see that 
$\max_{1\leq i,j\leq n}|\Delta_{ij}|\leq 1$. Therefore  
\beaa
\|\Delta_{\Theta}\|_F^2   =   \sum_{1\leq i,j\leq n}\Delta_{ij}^2\theta_i\theta_j 
 \leq   \sum_{1\leq i,j\leq n}|\Delta_{ij}| \theta_i\theta_j  
  =   \|\Delta\|_{1,\theta}. 
\eeaa
By properly permuting the row index, the true assignment matrix $\Phi^*$ can be written as a block diagonal matrix ${\rm diag}\{\mathbf{1}_{n_1\times1},...,\mathbf{1}_{n_k\times1}\}$,
where $\mathbf{1}_{n_i\times1}, 1\leq i\leq k$ is an $n_i\times1$ vector of all
ones. Consequently, $\Phi^*(\Phi^*)^{\top} = {\rm diag}\{\mathbf{1}_{n_1\times
n_1},...,\mathbf{1}_{n_k\times n_k}\}$ is also a block diagonal matrix, where
$\mathbf{1}_{n_i\times n_i}, 1\leq i\leq k$ is an $n_i\times n_i$ all-one square
matrix. The term $\Theta^{1.5}\Phi^*(\Phi^*)^{\top}\Theta^{1.5}$ is also a
block diagonal matrix. Let $\theta_{(a)} = (\theta_{i_1},...,\theta_{i_{n_a}})^{\top}, \Theta_{(a)}={\rm diag}\{\theta_{(a)}\}$, where $\{i_1,...,i_{n_a}\} = C_a^*$. Then the $a$th block of $\Theta^{1.5}\Phi^*(\Phi^*)^{\top}\Theta^{1.5}$ is 
$$\Theta_{(a)}^{1.5}\mathbf{1}\mathbf{1}^{\top}\Theta_{(a)}^{1.5} = \theta_{(a)}^{1.5}(\theta_{(a)}^{1.5})^{\top},$$
whose eigenvalues are $M_a$ and zeros. Hence, we have 
$$\|\Theta^{1.5}\Phi^*(\Phi^*)^{\top}\Theta^{1.5}\|_2^2 = \max_{1\leq a\leq k}M_a^2 = M_{\max}^2.$$
Therefore,  the inequality \eqref{eq:w1} is proved.

\textbf{Step (c).}  We next bound the objective function $v(\theta)$ defined in
(\ref{thm:dcsbm-Frobenius-err:nuclear}). Note that both matrices
$(\Phi^*)^{\top}\Theta^2\Phi^*(\Phi^*)^{\top}\Theta^2\Phi^*$ and
$(\Phi^*)^{\top}\Theta^{1.5}\Delta_{\Theta}\Theta^{1.5}\Phi^*$ are real
symmetric, hence are  Hermitian and normal. We can apply Lemma \ref{lemma:perturb}. For the ease of notation, let the $k$ singular values of $(U^*)^{\top}\Theta^2\Phi^*$ be $\sigma_1,...,\sigma_k$, and define $\delta_i = |\sigma_i^2-J_i^2|$. Lemma \ref{lemma:perturb} implies that 
$$\sum_{i=1}^k\delta_i^2\leq \|(\Phi^*)^{\top}\Theta^{1.5}\Delta_{\Theta}\Theta^{1.5}\Phi^*\|_F^2 \leq \|\Delta\|_{1,\theta}M_{\max}^2.$$
Together with 
\begin{equation}
\label{eq-2333}
\sigma_i \geq \sqrt{J_i^2-\delta_i} = J_i\sqrt{1-\frac{\delta_i}{J_i^2}} \geq J_i(1-\frac{\delta_i}{J_i^2}) = J_i - \frac{\delta_i}{J_i},
\end{equation}
we have
\beaa
v(\theta) & = & 2\left(\sum_{i=1}^n\theta_i^2 - \sum_{i = 1}^k\sigma_i\right) \leq 2\left(\sum_{i=1}^kJ_i - \sum_{i = 1}^k(J_i - \frac{\delta_i}{J_i})\right) \\
& = & 2\sum_{i = 1}^k \frac{\delta_i}{J_i} \leq 2\left(\sum_{i = 1}^k\frac{1}{J^2_i}\right)^{\half}\left(\sum_{i = 1}^k\delta_i^2\right)^{\half} \\
& \leq & 2\sqrt{k}\frac{M_{\max}}{J_{\min}}\|\Delta\|_{1,\theta}^{\half}.
\eeaa

We need to mention that in \eqref{eq-2333}, $\sigma_i>\sqrt{2}J_i>J_i-\frac{\delta_i}{J_i}$ when $J_i^2-\delta_i<0$. Consequently, the first inequality of Theorem \ref{theorem:dcsbm-Frobenius-err} is established.

(ii) Applying the first inequality with $\Theta = I$ gives 
$$\|U^*Q^*_2-\Phi^*\|_F^2\leq 2\sqrt{k}\frac{n_{\max}}{n_{\min}}\|\Delta\|_1^{\half}.$$
Together with $$\|\Delta\|_{1,\theta} = \sum_{ij}|\Delta_{ij}|\theta_i\theta_j\geq\theta_{\min}^2\|\Delta\|_1,$$
we prove the second inequality. \end{proof}


The bound on $\|U^*Q_2^*-\Phi^*\|_F^2$ implies that with a proper orthogonal
transform, $U^*$ is close to the true community assignment matrix $\Phi^*$. Then
if we apply K-means under $\ell_2$ norm metric over the rows of $U^*$, which is
equivalent to clustering over rows of $U^*Q_2^*$, the clustering result will
intuitively be close to the true community structure. Similar intuition applies
to the weighted K-means method (\ref{prob:w-k-means}). Suppose that the solution to the K-means clustering problem (\ref{prob:k-means}) is $\bar{\Phi}$. For an arbitrary permutation $\Pi$ of the columns of $\bar{\Phi}$, we define the set
$$Err_{\Pi}(\bar{\Phi}) = \{i \mid (\bar{\Phi}\Pi)_i\neq\phi^*_i\},$$
where $\Pi$ also stands for the corresponding permutation matrix and
$(\bar{\Phi}\Pi)_i$ denotes the $i$th row of the matrix $\bar{\Phi}\Pi$. This
set contains the indices of the rows of $\bar{\Phi}\Pi$ and $\Phi^*$ that do not
match. Since applying any permutation $\Pi$ to the columns of $\bar{\Phi}$ does not change the cluster structure at all, a fair error measure should be 
$$\min_{\Pi}|Err_{\Pi}(\bar{\Phi})|,$$
where the minimization is performed over the set of all permutation matrix. Under this measure, the following theorem guarantees the quality of our clustering result. 
\begin{theorem}\label{theorem:dcsbm-kmeans}
	Suppose that $\bar{\Phi}$ is generated by some K-means clustering algorithm
    with cluster number $k$ equal to the number of communities. Then under the DCSBM,
	$$\min_{\Pi}|Err_{\Pi}(\bar{\Phi})|\leq \frac{8\sqrt{k}n_{\max}}{\theta_{\min}n_{\min}}\left(1+\frac{n_{\max}}{n_{\min}}\right)(1+\alpha(k))\|\Delta\|_{1,\theta}^{\half},$$
	where $\alpha(k)$ is the approximation ratio of the polynomial time K-means approximation algorithm, and $\|\Delta\|_{1,\theta}$ is bounded with high probability by Lemma \ref{lemma:cmm-translate}.
\end{theorem}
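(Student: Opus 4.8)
The plan is to reduce everything to controlling $\eta^2:=\|\bar\Phi\bar X_c-\Phi^*(Q_2^*)^\top\|_F^2$, the Frobenius distance between the clustered matrix $\bar\Phi\bar X_c$ (with $\bar X_c$ the centers returned alongside $\bar\Phi$) and a rotated copy of the ground-truth assignment matrix, and then to convert this Frobenius bound into a node count. Set $B:=2\sqrt k\,\frac{n_{\max}}{\theta_{\min}n_{\min}}\|\Delta\|_{1,\theta}^{\half}$. Since $(\Phi^*,(Q_2^*)^\top)$ is feasible for \eqref{prob:k-means}, the optimal $K$-means value obeys
\[
\mathrm{OPT}\le\|\Phi^*(Q_2^*)^\top-U^*\|_F^2=\|\Phi^*-U^*Q_2^*\|_F^2\le B,
\]
using orthogonal invariance of $\|\cdot\|_F$ and the second bound of Theorem \ref{theorem:dcsbm-Frobenius-err}. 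Because $\bar\Phi$ comes from an $\alpha(k)$-approximation, $\|\bar\Phi\bar X_c-U^*\|_F^2\le\alpha(k)\,\mathrm{OPT}\le\alpha(k)B$, and combining with $\|x-y\|^2\le 2\|x\|^2+2\|y\|^2$ gives $\eta^2\le 2\alpha(k)B+2B=2(1+\alpha(k))B$.

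Next I would exploit the geometry of the target centers $c^*_a$, which are exactly the columns of the orthogonal matrix $Q_2^*$ and hence orthonormal, so any two distinct ground-truth centers are at distance $\sqrt2$. Write $\beta(\ell)$ for the ground-truth center closest to the $\ell$th estimated center $\bar c_\ell$. A triangle-inequality argument shows that whenever a node $i$ of true community $a$ is placed in an estimated cluster $\ell$ with $\beta(\ell)\neq a$, one has $\|\bar c_\ell-c^*_a\|\ge\tfrac1{\sqrt2}\|c^*_{\beta(\ell)}-c^*_a\|=\tfrac1{\sqrt2}$, so it contributes at least $\tfrac12$ to $\eta^2$. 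Summing, the number of such ``$\beta$-misclassified'' nodes is at most $2\eta^2$. If $\beta$ were a genuine permutation this would already yield $\min_\Pi|Err_\Pi(\bar\Phi)|\le 2\eta^2$.

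The main obstacle is precisely that $\beta$ need not be injective: several estimated clusters may share the same closest center, leaving some communities ``missed.'' I would resolve this by building a bona fide permutation $\pi$ that agrees with $\beta$ on one representative cluster per community in $\mathrm{range}(\beta)$ (choosing the representative richest in $\beta$-correct nodes) and assigns the excess clusters to the missed communities arbitrarily. Its error splits into the $\beta$-misclassified nodes (at most $2\eta^2$) plus the $\beta$-correct nodes stranded in non-representative clusters and thereby relabeled. Every missed community is entirely $\beta$-misclassified, so the number of missed — hence of excess — clusters is at most $2\eta^2/n_{\min}$, and each excess cluster can strand at most $n_{\max}$ correctly-oriented nodes. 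This is the step where the factor $n_{\max}/n_{\min}$ is born, producing the extra $2\tfrac{n_{\max}}{n_{\min}}\eta^2$ term and
\[
\min_\Pi|Err_\Pi(\bar\Phi)|\le 2\Big(1+\tfrac{n_{\max}}{n_{\min}}\Big)\eta^2.
\]

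Finally I would substitute $\eta^2\le 2(1+\alpha(k))B$ and the definition of $B$:
\[
\min_\Pi|Err_\Pi(\bar\Phi)|\le 2\Big(1+\tfrac{n_{\max}}{n_{\min}}\Big)\cdot 2(1+\alpha(k))\cdot 2\sqrt k\frac{n_{\max}}{\theta_{\min}n_{\min}}\|\Delta\|_{1,\theta}^{\half},
\]
which is exactly the claimed inequality. The only genuinely delicate point is the permutation-repair counting above; everything else is bookkeeping with the already-established Frobenius bound of Theorem \ref{theorem:dcsbm-Frobenius-err} and the approximation ratio. I would double-check that choosing each representative cluster to be the one richest in $\beta$-correct nodes keeps the relabeling loss within the $n_{\max}\cdot(\#\text{excess})$ budget, since that is where the constant and the $n_{\max}/n_{\min}$ factor are simultaneously determined.
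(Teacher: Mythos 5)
Your proof is correct and reaches the paper's bound with the identical constant, but the final combinatorial step is organized differently from the paper's. The shared skeleton is the same: feasibility of $(\Phi^*,(Q_2^*)^{\top})$ for \eqref{prob:k-means} plus the $\alpha(k)$ ratio gives $\eta^2\le 2(1+\alpha(k))\|U^*Q_2^*-\Phi^*\|_F^2$, the rotated true centers (columns of $Q_2^*$) are orthonormal hence $\sqrt{2}$-separated, and a counting argument pays the extra $n_{\max}/n_{\min}$. Where you diverge is in how that count is done. The paper works node-side: it collects the set $E$ of nodes whose fitted row is at squared distance $\ge\half$ from its rotated truth ($|E|\le 2\eta^2$), and then partitions the communities into $B_1$ (all good nodes lost), $B_2$ (good nodes land in a single estimated cluster), and $B_3$ (good nodes split), proving $|B_3|\le|B_1|\le|E|/n_{\min}$ and charging $n_{\max}$ per community in $B_3$. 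You work cluster-side: you map each estimated center to its nearest true center $\beta(\ell)$, show each $\beta$-misclassified node contributes $\ge\half$ to $\eta^2$, and repair the non-injectivity of $\beta$ by electing representatives, with the $m$ excess clusters matched to the $m$ missed (entirely $\beta$-misclassified) communities so that $m\le 2\eta^2/n_{\min}$ and each excess cluster strands at most $n_{\max}$ correct nodes. Both decompositions yield $2(1+n_{\max}/n_{\min})\eta^2$; yours arguably makes the role of the permutation more explicit (the paper's "proper permutation $\Pi^*$" is asserted rather than constructed), while the paper's avoids reasoning about the estimated centers' geometry at all, comparing only pairs of fitted rows. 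Two small remarks: the intermediate constant in your separation step is garbled --- the triangle inequality gives $\|\bar c_\ell-c^*_a\|\ge\tfrac12\|c^*_{\beta(\ell)}-c^*_a\|=\tfrac{1}{\sqrt2}$ (not a factor $\tfrac{1}{\sqrt{2}}$ applied to $\sqrt{2}$), though the resulting contribution of $\tfrac12$ to $\eta^2$ is what you use and is right; and the "richest representative" election is not actually needed, since any excess cluster with $\beta(\ell)=b$ contains at most $|C_b^*|\le n_{\max}$ $\beta$-correct nodes regardless of which member of $\beta^{-1}(b)$ is elected.
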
 

\emph{Proof.} Consider the K-means problem (\ref{prob:k-means}). Suppose that the solution generated by some polynomial time approximation algorithm is denoted by $\bar{\Phi},\bar{X}_c$, and the global optimal solution is denoted by $\Phi^{opt}, X_c^{opt}$. Suppose that the approximation ratio of the algorithm is $\alpha(k)$, where $k$ is the cluster number in the algorithm. Then directly
$$\|\bar{\Phi}\bar{X}_c-U^*\|_F^2\leq \alpha(k)\|\Phi^{opt}X_c^{opt}-U^*\|_F^2.$$
By the optimality of $\Phi^{opt}, X_c^{opt}$ and the feasibility of $\Phi^*, (Q_2^*)^{\top}$, where $Q_2^*$ is defined in Theorem \ref{theorem:dcsbm-Frobenius-err}, we have 
$$\|\Phi^{opt}X_c^{opt}-U^*\|_F^2\leq \|\Phi^*(Q_2^*)^{\top}-U^*\|_F^2.$$
Therefore,
\beaa
\|\bar{\Phi}\bar{X}_c-\Phi^*(Q_2^*)^{\top}\|_F^2 & = & \|\bar{\Phi}\bar{X}_c-U^*+U^*-\Phi^*(Q_2^*)^{\top}\|_F^2 \\
 & \leq & 2\|\bar{\Phi}\bar{X}_c-U^*\|_F^2+2\|\Phi^*(Q_2^*)^{\top}-U^*\|_F^2 \\
 & \leq & 2\alpha(k)\|\Phi^{opt}X_c^{opt}-U^*\|_F^2+2\|\Phi^*(Q_2^*)^{\top}-U^*\|_F^2 \\
 & \leq & 2(1+\alpha(k))\|\Phi^*(Q_2^*)^{\top}-U^*\|_F^2.
\eeaa

Denote by $Z_i$ the $i$th row of a matrix $Z$. We define the set that contains the potential errors
$$E_a = \bigg\{i\in C_a^*| \|\left(\bar{\Phi}\bar{X}_c\right)_i-\left(\Phi^*(Q_2^*)^{\top}\right)_i\|^2\geq\half\bigg\},$$
$$E = \cup_{a=1}^kE_a, S_a = C_a^*\backslash E_a.$$
A straightforward bound for the cardinality of $E$ gives
$$|E|\leq2\|\bar{\Phi}\bar{X}_c-\Phi^*(Q_2^*)^{\top}\|_F^2.$$
Now we define a partition of the community index set $[k] = \{1,...,k\},$
\beaa
& & B_1 = \{a \mid S_a = \emptyset\},\\
& & B_2 = \{a \mid \forall i,j\in S_a, (\bar{\Phi}\bar{X}_c)_i = (\bar{\Phi}\bar{X}_c)_j \},\\
& & B_3 = [k]\backslash(B_1\cup B_2).
\eeaa
For $\forall i,j \in E^c = \cup_{a\in B_2\cup B_3}S_a,$ and $i\in C_a^*, j\in C_b^*, a\neq b$, 
\beaa
& & \|(\bar{\Phi}\bar{X}_c)_i - (\bar{\Phi}\bar{X}_c)_j\| \\ & \geq & 
\|(\Phi^*(Q_2^*)^{\top})_i-(\Phi^*(Q_2^*)^{\top})_j\|  -\|(\bar{\Phi}\bar{X}_c)_i - (\Phi^*(Q_2^*)^{\top})_i \| \\ & & -  \|(\bar{\Phi}\bar{X}_c)_j - (\Phi^*(Q_2^*)^{\top})_j\|  \nonumber\\
 & > & \sqrt{2}-\frac{\sqrt{2}}{2}-\frac{\sqrt{2}}{2} =  0.
\eeaa

That is, $(\bar{\Phi}\bar{X}_c)_i \neq (\bar{\Phi}\bar{X}_c)_i$, or equivalently $\bar{\Phi}_i\neq \bar{\Phi}_j$. This implies that for all $S_a, a\in B_2$, they belong to different clusters. It further indicates that all nodes in $\cup_{a\in B_2}S_a$ are successfully classified with a proper permutation $\Pi^*$. In other words, 
$$Err_{\Pi^*}(\bar{\Phi})\subset \cup_{a\in B_3}S_a\cup E.$$ 

The above argument also reveals that if $a\neq b, S_a,S_b\neq \emptyset$, then $S_a$ and $S_b$ will correspond to different rows from $\bar{X}_c$. If $a\in B_2$, then $S_a$ corresponds to only one row of $\bar{X}_c$. If $a\in B_3$, then $S_a$ corresponds to at least two different rows of $\bar{X}_c$. But note that $\bar{X}_c$ contains at most $k$ different rows. Therefore,
$$|B_2|+2|B_3|\leq k = |B_1|+|B_2|+|B_3|,$$
which further implies 
$$|B_3|\leq|B_1|.$$
Note that $\cup_{a\in B_1}C_a^*\subset E$, we have $|B_1|n_{\min}\leq |E|$, or equivalently
$$|B_1|\leq \frac{1}{n_{\min}}|E|.$$
Combining all previous results yields
\beaa
|Err_{\Pi^*}(\bar{\Phi})| & \leq & |E| + |\cup_{a\in B_3}S_a| \leq  |E| + |B_3|n_{\max} \\
& \leq & (1+\frac{n_{\max}}{n_{\min}})|E| \\
& \leq & 4(1+\frac{n_{\max}}{n_{\min}})(1+\alpha(k))\|U^*Q_2^*-\Phi^*\|_F^2\\
& \leq & \frac{8\sqrt{k}n_{\max}}{\theta_{\min}n_{\min}}\left(1+\frac{n_{\max}}{n_{\min}}\right)(1+\alpha(k))\|\Delta\|_{1,\theta}^{\half}.
\eeaa
This completes the proof.   $\blacksquare$

Note that the factor $\frac{1}{\theta_{\min}}$ appears in the bound. This term
vanishes under the SBM where all $\theta_i=1$. However, it is undesirable under the
DCSBM. To avoid the dependence on this term, we propose to solve the weighted K-means problem (\ref{prob:w-k-means}). In this case, suppose that the solution to problem (\ref{prob:w-k-means}) is $\bar{\Phi}$, then the fair error measure will be  
$$\min_{\Pi}\sum_{i \in Err_{\Pi}(\bar{\Phi})}\theta_i^2,$$
where $\Pi$ is a permutation matrix. This measure means that when a node has smaller $\theta_i$, it potentially has few edges and provides less information for clustering, then misclassifying this node is somewhat forgivable and suffers less penalty. In contrast, if a node has large $\theta_i$, then misclassifying this node should incur more penalty. Now we present the theorem that address the above issue.

\begin{theorem}\label{theorem:dcsbm-w-kmeans}
	Suppose that $\bar{\Phi}$ is generated by some weighted K-means clustering
    algorithm with the number of clusters  $k$ equal to the number of
    communities. Then under the DCSBM,
	$$\min_{\Pi}\sum_{i \in Err_{\Pi}(\bar{\Phi})}\theta_i^2 \leq \frac{16\sqrt{k}J_{\max}}{H^2_{\min}J_{\min}}\left(C_H+ H^2_{\max}\right) (1+\alpha(k))\|\Delta\|_{1,\theta}^{\half},$$
	where $\alpha(k)$ is the approximation ratio of the polynomial time approximation algorithm for problem (\ref{prob:w-k-means}) and the quantity $C_H\ll H^2_{\max}$. 
\end{theorem}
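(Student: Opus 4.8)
The plan is to mirror the proof of Theorem~\ref{theorem:dcsbm-kmeans} for the weighted K-means problem~\eqref{prob:w-k-means}, the essential change being that the diagonal weight $D = {\rm diag}\{d_i\}$ in the objective matches the scale $d_i\approx f_i = H_a\theta_i$ of the degree-corrected model, so I would pair it with the $\Theta$-weighted factor bound of Theorem~\ref{theorem:dcsbm-Frobenius-err}(i) rather than the unweighted bound (ii). First I would set up the approximation/optimality chain exactly as before: letting $\bar{\Phi},\bar{X}_c$ be the output of an $\alpha(k)$-approximate weighted K-means and comparing it against the feasible pair $(\Phi^*,(Q_1^*)^\top)$ with $Q_1^*$ from Theorem~\ref{theorem:dcsbm-Frobenius-err}, the approximation guarantee together with the triangle inequality gives
$$\|D(\bar{\Phi}\bar{X}_c - \Phi^*(Q_1^*)^\top)\|_F^2 \leq 2(1+\alpha(k))\|D(\Phi^*(Q_1^*)^\top - U^*)\|_F^2.$$
Here I would use that right-multiplication by the orthogonal matrix $Q_1^*$ preserves the Frobenius norm, so $\|D(\Phi^*(Q_1^*)^\top - U^*)\|_F = \|D(U^*Q_1^* - \Phi^*)\|_F$.

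The heart of the argument, and the step that is genuinely new compared with the unweighted case, is to exchange the random weights $d_i$ for the deterministic $\theta_i$ on both sides. On the comparison side I would use a high-probability \emph{upper} bound $d_i^2 \leq (C_H + H_{\max}^2)\theta_i^2$, where $C_H$ absorbs the degree fluctuation $d_i^2 - f_i^2$ and is lower order because $\mathbb{E}d_i \approx f_i = H_a\theta_i$; combined with Theorem~\ref{theorem:dcsbm-Frobenius-err}(i) this yields
$$\|D(U^*Q_1^* - \Phi^*)\|_F^2 \leq (C_H + H_{\max}^2)\|\Theta(U^*Q_1^* - \Phi^*)\|_F^2 \leq 2\sqrt{k}\,(C_H + H_{\max}^2)\frac{M_{\max}}{J_{\min}}\|\Delta\|_{1,\theta}^{\half}.$$
On the error-counting side I would instead use the matching \emph{lower} bound $d_i \geq H_{\min}\theta_i$, which is what converts the $d_i$-weighted objective into the $\theta_i^2$-weighted error measure and produces the $1/H_{\min}^2$ factor.

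For the combinatorial part I would reuse verbatim the partition of $[k]$ into $B_1,B_2,B_3$ from the proof of Theorem~\ref{theorem:dcsbm-kmeans}, with the error sets $E_a = \{i\in C_a^* : \|(\bar{\Phi}\bar{X}_c)_i - (\Phi^*(Q_1^*)^\top)_i\|^2 \geq \half\}$ taken with the \emph{same} threshold, since the inter-community separation $\|Q_1^* e_a - Q_1^* e_b\| = \sqrt{2}$ is unchanged by the orthogonal rotation. The only modification is that every count is now the weighted counter $\sum_i\theta_i^2$: using $\sum_{i\in C_a^*}\theta_i^2 = J_a$ one gets $\sum_{a\in B_3}\sum_{i\in S_a}\theta_i^2 \leq |B_3|J_{\max}$ and $|B_1|J_{\min} \leq \sum_{i\in E}\theta_i^2$, which together with $|B_3|\leq|B_1|$ gives $\sum_{i\in Err_{\Pi^*}(\bar{\Phi})}\theta_i^2 \leq (1+\tfrac{J_{\max}}{J_{\min}})\sum_{i\in E}\theta_i^2$. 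Bounding $\sum_{i\in E}\theta_i^2 \leq \tfrac{2}{H_{\min}^2}\|D(\bar{\Phi}\bar{X}_c - \Phi^*(Q_1^*)^\top)\|_F^2$ through the threshold and the lower bound on $d_i$, and then chaining the three displayed estimates, produces a bound of the claimed form.

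The main obstacle I anticipate is exactly this two-sided degree concentration: the random degrees $d_i = \sum_j A_{ij}$ must be controlled \emph{simultaneously} for all $n$ nodes, both from above (to obtain a clean $C_H + H_{\max}^2$ factor with $C_H \ll H_{\max}^2$) and from below (to justify $d_i \gtrsim H_{\min}\theta_i$ and hence the $1/H_{\min}^2$ factor). Obtaining a uniform estimate whose deviation term is genuinely lower order requires a Bernstein- or Chernoff-type tail bound on each $d_i$ together with a union bound, and is delicate in the sparse regime where $f_i = H_a\theta_i$ may be small; this concentration analysis is precisely the ingredient that is entirely absent from the unweighted Theorem~\ref{theorem:dcsbm-kmeans}.
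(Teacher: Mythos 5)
The paper gives no proof of this theorem to compare against: it says only that the argument is ``similar in nature to that of Theorem \ref{theorem:dcsbm-kmeans}, but is more technically involved'' and omits it. Your reconstruction is the natural one, and its skeleton --- the $\alpha(k)$-approximation chain against the feasible pair $(\Phi^*,(Q_1^*)^\top)$, the threshold sets $E_a$ with the same $\tfrac{1}{2}$ cutoff, the $B_1,B_2,B_3$ partition with $|B_3|\le|B_1|$, and the weighted counts $|B_1|J_{\min}\le\sum_{i\in E}\theta_i^2$ and $\sum_{a\in B_3}\sum_{i\in S_a}\theta_i^2\le|B_3|J_{\max}$ --- is sound and almost certainly what the authors intend. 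But two concrete issues remain. First, chaining your displayed estimates yields
\begin{equation*}
\min_{\Pi}\sum_{i\in Err_{\Pi}(\bar{\Phi})}\theta_i^2\;\le\;16\sqrt{k}\,\frac{J_{\max}}{H_{\min}^2 J_{\min}}\,\bigl(C_H+H_{\max}^2\bigr)(1+\alpha(k))\cdot\frac{M_{\max}}{J_{\min}}\,\|\Delta\|_{1,\theta}^{1/2},
\end{equation*}
i.e.\ an extra factor $M_{\max}/J_{\min}$ relative to the stated bound, inherited from Theorem \ref{theorem:dcsbm-Frobenius-err}(i). Since $M_a=\sum_{i\in C_a^*}\theta_i^3$ is not dominated by $J_a$ without an upper bound on the $\theta_i$, this factor cannot simply be absorbed into a constant; either the theorem's constant is loose as stated or the intended argument uses a sharper weighted analogue of Theorem \ref{theorem:dcsbm-Frobenius-err}, and your write-up should flag the mismatch rather than claim the stated form.

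Second --- and this is the genuine gap --- the two-sided uniform degree bounds $d_i\gtrsim H_{\min}\theta_i$ and $d_i^2\le(C_H+H_{\max}^2)\theta_i^2$ for all $i$ simultaneously are asserted but never established, and they are not merely technical: $d_i$ is a sum of independent Bernoullis with mean $\approx H_a\theta_i$, so the lower bound fails with non-negligible probability (indeed $d_i=0$ can occur) unless $\min_i f_i\gtrsim\log n$, an assumption that appears nowhere in the theorem. You correctly identify this as the key missing ingredient, but without either supplying the Chernoff-plus-union-bound argument under an explicit density assumption, or defining $C_H$ precisely enough to absorb the deviation event into the probability already spent in Lemma \ref{lemma:cmm-translate}, the proof is incomplete at exactly the step that distinguishes it from Theorem \ref{theorem:dcsbm-kmeans}. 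The upper-bound side is more forgiving, since it enters only through a worst-case ratio, but it too needs a quantified high-probability statement before the final bound can be claimed.
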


The proof of this theorem is similar in nature to that of Theorem \ref{theorem:dcsbm-kmeans}, but is more technically involved. For the sake of being succinct, we omit the proof here. 

For large scale networks, solving problem (\ref{prob:CP_DCk}) can be very
efficient when the $\ell_0$ penalty parameter $p$ is set to be moderate and the
asynchronous parallel scheme is applied. However, performing the K-means or
weighted K-means clustering will be a bottleneck when the number of communities is relatively large. Therefore, the proposed direct rounding scheme will be desirable. In the following theorem, we discuss the theoretical guarantees for this rounding scheme. 
To invoke the analysis, the following assumption is needed. 
\begin{assumption}\label{assumption:round}
	Define the set $Err(Q,\delta) = \{i:\|(U^*Q)_i-\phi^*_i\|^2\geq \delta\}$ and define $T_a(Q,\delta) = C_a^*\backslash Err(Q,\delta)$. 
	Suppose $Q_1^*, Q_2^*$ are defined according to Theorem \ref{theorem:dcsbm-Frobenius-err}, we assume that all $T_a(Q_1^*,\frac{1}{32p^2})\neq \emptyset, T_a(Q_2^*,\frac{1}{32p^2}) \neq \emptyset.$
\end{assumption} 

This assumption states that for each community $C_a^*$, there is at least one node $i\in C_a^*$ such that $U_i^*$ is close to the true assignment $\phi_i^*$ after the rotation $Q_1^*$ or $Q_2^*$. In our extensive numerical experiments, this assumption actually always holds.

\begin{theorem}\label{theorem:dcsbm-rounding}
	Suppose that all Assumption \ref{assumption:round} holds and the community assignment $\bar{\Phi}$ is given by a directly rounding $U^*$. Then under the DCSBM, the error is bounded by
	
	$$\min_{\Pi}|Err_{\Pi}(\bar{\Phi})| \leq C_2\frac{p^{2}\sqrt{k}n_{\max}}{n_{\min}\theta_{\min}}\|\Delta\|_{1,\theta}^{\half}, $$
	$$\min_{\Pi}\sum_{i \in Err_{\Pi}(\bar{\Phi})}\theta_i^2 \leq C_2\frac{p^{2}\sqrt{k}M_{\max}}{J_{\min}}\|\Delta\|_{1,\theta}^{\half},$$
	where $C_2>0$ is some absolute constant.
\end{theorem}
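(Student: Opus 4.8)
The plan is to reduce the misclassification count to the size of a \emph{badly aligned} node set, which I can control directly through the Frobenius bounds of Theorem~\ref{theorem:dcsbm-Frobenius-err}. I fix $\delta=\frac{1}{32p^2}$ and write $q_a^{(\ell)}$ for the $a$-th column of the orthogonal matrix $Q_\ell^*$; since $\phi_i^*=e_a$ for $i\in C_a^*$ and $\|(U^*Q_\ell^*)_i-\phi_i^*\|=\|Q_\ell^{*\top}u_i^*-e_a\|=\|u_i^*-q_a^{(\ell)}\|$, the bad set of Assumption~\ref{assumption:round} is $Err(Q_\ell^*,\delta)=\{i\in C_a^*:\|u_i^*-q_a^{(\ell)}\|^2\ge\delta\}$. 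A Markov-type inequality then gives, for $\ell=2$,
\be
|Err(Q_2^*,\delta)|\,\delta \le \sum_{i\in Err(Q_2^*,\delta)}\|(U^*Q_2^*)_i-\phi_i^*\|^2 \le \|U^*Q_2^*-\Phi^*\|_F^2,
\ee
and, for $\ell=1$, the weighted analogue $\sum_{i\in Err(Q_1^*,\delta)}\theta_i^2\,\delta \le \|\Theta(U^*Q_1^*-\Phi^*)\|_F^2$. Substituting the two bounds of Theorem~\ref{theorem:dcsbm-Frobenius-err} together with $\tfrac1\delta=32p^2$ already produces the two desired right-hand sides (with $C_2$ essentially $64$). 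It therefore remains only to show that every node \emph{outside} the bad set is classified correctly under a single, well-chosen permutation.

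Next I would build that permutation from the representatives supplied by Assumption~\ref{assumption:round}. For each community $a$ I pick $i_a\in T_a(Q_\ell^*,\delta)$, so $\|u_{i_a}^*-q_a^{(\ell)}\|<\sqrt\delta$, and set $\pi(a)=\argmax_j (u_{i_a}^*)_j$, the label that the rounding \eqref{rounding} assigns to the representative. The crucial structural fact is a \emph{cross-community separation} property: if $i\in C_a^*$ and $i'\in C_b^*$ with $a\ne b$ are both good, their rounded labels differ. Indeed, since the columns of an orthogonal matrix are orthonormal, $\langle q_a^{(\ell)},q_b^{(\ell)}\rangle=0$, and Cauchy--Schwarz gives
\be
\langle u_i^*,u_{i'}^*\rangle \le \langle q_a^{(\ell)},q_b^{(\ell)}\rangle + 2\sqrt\delta+\delta = 2\sqrt\delta+\delta < \tfrac1p,
\ee
the last inequality being exactly what the choice $\delta=\frac1{32p^2}$ secures. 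On the other hand every feasible $u_i^*$ is nonnegative, unit-norm and $p$-sparse, so its largest entry is at least $1/\sqrt p$; if $u_i^*$ and $u_{i'}^*$ shared an argmax $j_0$, then by nonnegativity of \emph{all} coordinates $\langle u_i^*,u_{i'}^*\rangle\ge (u_i^*)_{j_0}(u_{i'}^*)_{j_0}\ge \frac1p$, a contradiction. Applying this to the $k$ representatives shows $\pi$ takes $k$ distinct values and is thus a genuine permutation of $[k]$.

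The classification of every good node then follows by a pigeonhole argument that sidesteps any delicate coordinate-gap estimate. Since $\pi$ is a bijection of $[k]$, its image is all of $[k]$; so if some good node $i\in C_a^*$ were rounded to a label $s\ne\pi(a)$, that label would equal $\pi(b)=\argmax_j(u_{i_b}^*)_j$ for some $b\ne a$, forcing the two good nodes $i$ and $i_b$ (which lie in different communities) to share a rounded label and contradicting the separation property. Hence every good node $i\in C_a^*$ rounds to $\pi(a)$, so under the permutation induced by $\pi$ we have $Err_\pi(\bar\Phi)\subseteq Err(Q_\ell^*,\delta)$. Taking $\ell=2$ for the unweighted count and $\ell=1$ for the weighted count, and combining with the first paragraph, yields both inequalities.

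I expect the main obstacle to be precisely the interface between the orthogonal alignment $Q_\ell^*$ and the \emph{alignment-free} rounding \eqref{rounding}: because $Q_\ell^*$ is merely orthogonal, its columns $q_a^{(\ell)}$ are neither nonnegative nor close to coordinate vectors, so one cannot argue coordinate-wise that $u_i^*$ lies near a standard basis vector, and a naive gap argument would collapse whenever $q_a^{(\ell)}$ has near-tied coordinates. The resolution is to avoid comparing $u_i^*$ with $Q_\ell^*$ entrywise and instead exploit nonnegativity and $p$-sparsity through the inner-product inequality above, converting the geometric near-orthogonality of the $q_a^{(\ell)}$ into combinatorial distinctness of rounded labels. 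The remaining technical care lies in pinning down the constant in $2\sqrt\delta+\delta<\frac1p$ and in checking that the identical separation argument runs simultaneously for $Q_2^*$ (the unweighted measure) and $Q_1^*$ (the weighted measure).
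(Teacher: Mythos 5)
Your proof is correct, and its first and last steps (the Markov-type reduction $|Err(Q_2^*,\delta_0)|\le \delta_0^{-1}\|U^*Q_2^*-\Phi^*\|_F^2$, its weighted analogue for $Q_1^*$, and the substitution of the bounds from Theorem \ref{theorem:dcsbm-Frobenius-err} with $\delta_0^{-1}=32p^2$) coincide with the paper's. Where you genuinely diverge is in showing that every good node is correctly classified. The paper first establishes (its Step (a)) exactly your argmax-collision argument --- a shared maximizing coordinate would force $\langle v_i,v_j\rangle\ge \tfrac1p$ by nonnegativity, unit norm and $p$-sparsity --- but only for the $k$ representatives; it then runs a two-round bootstrapping computation (Steps (b)--(c)) to obtain the coordinate-wise estimates $(v_i)_i>\sqrt{1-\tfrac{1}{3p}}$ and $(v_i)_j<\tfrac{1}{\sqrt{3}p}$, and finally transfers these to an arbitrary good node through its distance to the representative, checking that the resulting gap keeps the argmax in the correct coordinate (a check the paper states only for $p\ge 2$). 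You bypass Steps (b)--(c) entirely: by applying the collision argument to \emph{every} pair of good nodes lying in distinct communities --- deriving $\langle u_i^*,u_{i'}^*\rangle\le 2\sqrt{\delta_0}+\delta_0<\tfrac1p$ directly from $\langle q_a^{(\ell)},q_b^{(\ell)}\rangle=0$ rather than from the paper's triangle inequality through $\|\phi_i^*-\phi_j^*\|=\sqrt2$ --- you conclude that cross-community good nodes never share a rounded label, and the pigeonhole step then forces each good node in $C_a^*$ to round to $\pi(a)$. This is cleaner, avoids the delicate coordinate-gap estimates, works uniformly for all $p\ge1$, and yields the same constant $C_2=64$; the only difference is the marginally different (still sufficient) numerical threshold $2\sqrt{\delta_0}+\delta_0<\tfrac1p$ in place of the paper's $2(\sqrt{2\delta_0}-\delta_0)\le\tfrac{1}{2p}$.
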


\emph{Proof.} 
(i) We prove the first inequality by showing that all nodes in $\cup_{a=1}^kT_a(Q_2^*,\frac{1}{32p^2})$ are correctly classified under a proper permutation matrix $\Pi$.  Define $\delta_0 = \frac{1}{32p^2}$. By the Assumption \ref{assumption:round}, we have 
$$|Err(Q_2^*,\delta_0)| \leq \frac{1}{\delta_0}\|U^*Q_2^*-\Phi^*\|_F^2, \text{ and }T_a(Q_2^*,\delta_0)\neq \emptyset, 1\leq a \leq k.$$

First, for $\forall i,j\in T_a(Q_2^*,\delta_0)$, and for some $a\in[k]$, we have $\phi^*_i=\phi_j^*$. Recall that $(u^*_i)^{\top}$ stands for the $i$th row of the solution $U^*$ to problem (\ref{prob:CP_DCk}), we further have  
\beaa
\|u_i^*-u_j^*\|  & = & \|(Q_2^*)^{\top}(u_i^*-u_j^*)\| \\ 
& = & \|(Q_2^*)^{\top}u_i^*-\phi^*_i+\phi_j^*-(Q_2^*)^{\top}u_j^*\| \\
& \leq & \|(Q_2^*)^{\top}u_i^*-\phi^*_i\| + \|(Q_2^*)^{\top}u_j^* -\phi_j^*\|\\
& < & 2\sqrt{\delta_0}.
\eeaa 
Second, for $\forall i, j, a\neq b, i\in T_a(Q_2^*,\delta_0), j\in T_b(Q_2^*,\delta_0)$, we have $\phi^*_i\neq\phi_j^*$. Similarly, 
\beaa
\|u_i^*-u_j^*\|  & = & \|(Q_2^*)^{\top}(u_i^*-u_j^*)\| \\ 
& = & \|(Q_2^*)^{\top}u_i^*-\phi^*_i+\phi^*_i-\phi^*_j+\phi_j^*-(Q_2^*)^{\top}u_j^*\| \\
& \geq & \|\phi_i^*-\phi_j^*\|-\|(Q_2^*)^{\top}u_i^*-\phi^*_i\| - \|(Q_2^*)^{\top}u_j^*-\phi_j^*\|\\
& > & \sqrt{2}-2\sqrt{\delta_0}.
\eeaa
That is, 
\be\label{thm:round:sphere}
\|u_i^*-u_j^*\| \begin{cases}
	  < 2\sqrt{\delta_0}, & \forall a, \forall i,j \in T_a(Q_2^*,\delta_0), \\
	  >\sqrt{2}-2\sqrt{\delta_0}, & \forall a\neq b, \forall i\in T_a(Q_2^*,\delta_0), \forall j\in T_b(Q_2^*,\delta_0).
\end{cases}
\ee

From each set $T_a(Q_2^*,\delta_0)$, we take an arbitrary representative $v_a$. Then we have $k$ vectors in $\mathbb{R}^k$ such that 
$$\|v_i\|^2 = 1, \|v_i\|_0\leq p, v_i\geq 0, \forall 1\leq i\leq k, $$
$$\|v_i-v_j\|\geq \sqrt{2} - 2\sqrt{\delta_0}, \forall i\neq j,$$
which further implies that 
$$\langle v_i,v_j\rangle\leq \epsilon, \forall i\neq j,$$
where $\epsilon = \frac{1}{2p}\geq 2(\sqrt{2\delta_0}-\delta_0)$. The following proof consists of mainly three steps. 


\textbf{Step (a)}. Define $m(i) = \arg\max_{1\leq j\leq k}(v_i)_j$, where $(v_i)_j$ is the $j$th component of $v_i$. Then for any $i\neq j$, we prove $m(i)\neq m(j)$.  Suppose there exist $i\neq j$ such that $m(i) = m(j).$ Then by the definition of $m(i)$, it is straightforward that $$(v_i)_{m(i)}\geq\frac{1}{\sqrt{p}}.$$
Therefore, by $v_i,v_j\geq 0$, 
 $$\langle v_i,v_j\rangle \geq (v_i)_{m(i)}\cdot (v_j)_{m(j)}\geq \frac{1}{p}>\frac{1}{2p} = \epsilon,$$
 which leads to a contradiction. We can then choose a proper permutation of
 index $\Pi$ such that $m(i) = i$ for $1\leq i\leq k$. This means that all the
 $k$ representatives $v_1,...,v_j$ are correctly classified.

\textbf{Step (b)}. Suppose that after a proper permutation of indices, $m(i) = i, 1\leq i\leq k$. 
Then in this step, we prove that for all $1\leq i\leq k,$ $v_i$ is sufficiently close to $e_i$. First, by
\be\label{thm:round_1}
(v_1)_2\cdot \frac{1}{\sqrt{p}}\leq (v_1)_2\cdot (v_2)_2\leq\langle v_1,v_2\rangle\leq \frac{1}{2p},
\ee
we obtain, 
\bea
(v_1)_2\leq \frac{1}{2\sqrt{p}}.
\eea
Repeat the same argument for $(v_1)_3,...,(v_1)_k$ and we have them all less
than or equal to $\frac{1}{2\sqrt{p}}$. It follows from $\|v_1\|_0\leq p$ and
$\|v_1\|^2 = 1$ that 
\be
\label{thm:round_3}
(v_1)_1^2 = 1-\sum_{j=2}^k(v_1)_j^2\geq 1-\frac{p-1}{4p}>\frac{3}{4}.
\ee
By repeating the same argument for $v_2,...,v_k$, we have for $1\leq i\leq k$,
\be
\label{thm:round_4}
\begin{cases}
(v_i)_i>\frac{\sqrt{3}}{2},\\
(v_i)_j<\frac{1}{2\sqrt{p}},  & \text{ for } \forall j\neq i.
\end{cases}
\ee
Now based on the new lower bounds of $(v_i)_i,1\leq i\leq k,$ we repeat the
process (\ref{thm:round_1}), (\ref{thm:round_3}) and (\ref{thm:round_4}) again. We obtain that for all $1\leq i\leq k$,
$$\begin{cases}
(v_i)_i>\sqrt{1-\frac{1}{3p}},\\
(v_i)_j<\frac{1}{\sqrt{3}p}, \text{ for } \forall j\neq i.
\end{cases}$$

\textbf{Step (c)}. Now we argue that all nodes in $\cup_{1\leq a\leq k}T_a(Q_2^*,\delta_0)$ are correctly classified in this step.
For any other nodes $s\in T_a(Q_2^*,\delta_0)$, due to its proximity to $v_a$ (\ref{thm:round:sphere}), we know that 
$$\begin{cases}
(u^*_s)_a>\sqrt{1-\frac{1}{3p}}-\frac{\sqrt{2}}{4p},\\
(u^*_s)_j<\frac{1}{\sqrt{3}p}+\frac{\sqrt{2}}{4p}, \forall j\neq a.
\end{cases}$$
For all $p\geq 2$, $u^*_s(a)>u^*_s(j)$ for $j\neq a$. This means that all nodes in $\cup_{1\leq a\leq k}T_a(Q_2^*,\delta_0)$ are correctly classified. That is, 
\beaa
\min_{\Pi}|Err_{\Pi}(\bar{\Phi})| & \leq & |Err(Q_2^*,\delta_0)|\leq \frac{1}{\delta_0}\cdot\|U^*Q_2^*-\Phi^*\|_F^2 \\
& \leq & C_2\frac{p^{2}\sqrt{k}n_{\max}}{n_{\min}\theta_{\min}}\|\Delta\|_{1,\theta}^{\half},
\eeaa
where the last inequality follows from the result of Theorem \ref{theorem:dcsbm-Frobenius-err}.

(ii). The second inequality of this theorem can be extended directly from the first one: 
\beaa
\sum_{i \in Err(Q_1^*,\delta_0)}\theta_i^2 \leq \sum_{i \in Err(Q_1^*,\delta_0)}\theta_i^2\cdot\frac{1}{\delta_0}\cdot\|(Q_1^*)^{\top}u_i^*-\phi_i^*\|^2 
 \leq \frac{1}{\delta_0}\|\Theta(U^*Q_1^*-\Phi^*)\|_F^2.
\eeaa
Then by following the bound in Theorem \ref{theorem:dcsbm-Frobenius-err}, the
proof is completed. 
$\blacksquare$

\section{Numerical Result}\label{section:numericalEXP}
In this section, we evaluate the performance of our RBR
method on both synthetic and real datasets, where
 the parameter $\lambda$ is set to $1/\|d\|_1$ in
all cases.
 Our RBR algorithm with rounding and K-means are referred to  as RBR(r) and RBR(K), 
 respectively.  For
synthetic and small-sized real networks, we compare them with the CMM algorithm in \cite{CMM-X.Li-2015}, 
the SCORE algorithm in \cite{Spectral-SCORE-Jin-2015} and the OCCAM algorithm in \cite{Spectral-OCCAM-Zhang-2014}. For large-sized real datasets, 
 we only compare the asynchronous parallel RBR algorithm with the LOUVAIN algorithm in
 \cite{2008JSMTE..10..008B}, which is an extremely fast algorithm to process large networks.

\subsection{Solvers and Platform}
The algorithm RBR(r) is implemented in C, with  multi-threading
support using OpenMP. Due to the usage of K-means, the version
RBR(K) is written in MATLAB. The CMM, SCORE and OCCAM for synthetic and small-sized
real networks are also written in  MATLAB.  
For large-sized
networks, we use the LOUVAIN solver hosted
on Google Sites\footnote{See \url{https://sites.google.com/site/findcommunities/}},
which is implemented in C++.
 The parameters in all methods are set to their default values unless otherwise specified. Since $k$, the number of communities of the synthetic and small-sized real networks, is known (usually very small), the parameter $r$, i.e., the number of columns of $U$ is set to $k$. The parameter $p$ in the $\ell_0$ constraints of our RBR algorithm is also set to $k$ in these tests.  
 A few different values of $p$ are tested on large-size real networks. 

All numerical experiments are performed on a workstation with two twelve-core Intel Xeon E5-2680 v3 processors at 2.5 GHz
 and a total amount of 128 GB shared memory. The efficiency of OpenMP
may be affected by NUMA systems greatly. It is much slower for a CPU to 
access the memory of another CPU. To prevent such allocations of
threads, 
we bind all programs to one CPU, i.e., up to 12 physical cores are available
per task.

Other programming environments are:
\begin{itemize}
	\item gcc/g++ version 6.2.0 is used for compiling our C programs and LOUVAIN executables.
	\item MATLAB version R2015b is used for running CMM, SCORE, OCCAM and RBR with K-means solvers.
	\item Intel MKL 2017 for BLAS routines, linked with {\ttfamily libmkl\_sequential.so}.
\end{itemize}
We should mention that there are threaded and sequential versions in the Intel MKL's library.
 The threaded MKL library should be disabled in favor of our manual parallelization.
The reason is that when updating one row, the threaded library will automatically utilize all
available computing threads for BLAS, which may 
interfere with the cores processing other rows.
Besides, the sequential library is usually faster than
the parallel library with one thread.
 The reported runtimes are wall-clock times in seconds.

\subsection{Results on Synthetic Data}
To set up the experiments, we generate the synthetic graph of $n=m\times k$ nodes and split them into $k$ groups $C_1^*,\ldots, C_k^*$, each group with $m$ nodes. These groups are then used as the ground truth of the model. The edges of the graph are generated by sampling. For each pair of nodes $i\in C_a^*$ and $j\in C_b^*$, $i$
and $j$ are connected
with probability $\min\{1,\theta_i\theta_jB_{ab}\}$, where
\begin{equation}
B_{ab}= \begin{cases} 
q, & a = b, \\
0.3q, & a \neq b.
\end{cases}   
\end{equation}
For each $i$, $\theta_i$ is sampled independently from a $\mathrm{Pareto}(\alpha, \beta)$ distribution
with probability density function $f(x;\alpha,\beta)=\frac{\alpha\beta^\alpha}{x^{\alpha+1}\mathbf{1}_{\{x\geqslant \beta\}}}$.
Here $\alpha$ and $\beta$ are called the \emph{shape} and \emph{scale} parameters respectively.
In our testing model, we choose different shape
parameter $\alpha$ first, and select the scale parameter
$\beta$ such that $\mathbb{E}(\theta_i)=1$ for all $i$.
These parameters determine the strength of the cluster structure of the graph.
With larger $q$ and $\alpha$, it is easier to detect the community structure. Otherwise
all these algorithms may fail. Since the ground truths of the synthetic networks are known, we compute the misclassification
rate to check the correctness of these algorithms. Suppose that $C_1,\ldots,C_k$ are the detected communities provided
by any algorithm, then the misclassification rate can be defined as
\begin{equation} \label{eqn:mis}
\mathrm{err}:=1-\frac{\sum_{i}^{k}\max_j|C_i\cap C_{j}^*|}{n}.
\end{equation}
We shall mention that permuting
the columns of $U$ does not change its objective function value. 
Since it is impractical to  enumerate all possible permutations and choose the best one to match the
detected communities and the ground truth when
$k$ is large, we still use the definition of misclassification rate in \eqref{eqn:mis}, and it works in most cases during our numerical experiments. 

\begin{figure}[htb]
	\caption{Numerical results on synthetic data.} \label{fig:syn}
	\centering
	\hfill
	\subfigure[2 communities, 200 nodes per community]{
		\begin{tikzpicture}[scale=0.7]
		
\begin{axis}[
grid,
xmin=1.1,
xlabel=shape,
ylabel=misclassification rate,
legend style={nodes={inner sep=0.5pt}}
]

\addplot[color=red,mark=x] coordinates{(1.1, 0.41975) (1.2, 0.35713) (1.3, 0.22625) (1.4, 0.16187) (1.5, 0.14338) (1.6, 0.10112) (1.7000000000000002, 0.09025) (1.8, 0.080625) (1.9, 0.09025)};
\addlegendentry[font=\tiny,anchor=west]{{\ttfamily q=0.05,CMM}}

\addplot[color=green,mark=x] coordinates{(1.1, 0.3815) (1.2, 0.19587) (1.3, 0.078375) (1.4, 0.046375) (1.5, 0.0325) (1.6, 0.022125) (1.7000000000000002, 0.01575) (1.8, 0.01675) (1.9, 0.0115)};
\addlegendentry[font=\tiny,anchor=west]{{\ttfamily q=0.1,CMM}}

\addplot[color=blue,mark=x] coordinates{(1.1, 0.3025) (1.2, 0.092) (1.3, 0.034) (1.4, 0.018375) (1.5, 0.010625) (1.6, 0.009) (1.7000000000000002, 0.004375) (1.8, 0.003375) (1.9, 0.0025)};
\addlegendentry[font=\tiny,anchor=west]{{\ttfamily q=0.15,CMM}}

\addplot[color=black,mark=x] coordinates{(1.1, 0.1915) (1.2, 0.06425) (1.3, 0.02025) (1.4, 0.0085) (1.5, 0.00375) (1.6, 0.002625) (1.7000000000000002, 0.001375) (1.8, 0.001375) (1.9, 0.00125)};
\addlegendentry[font=\tiny,anchor=west]{{\ttfamily q=0.2,CMM}}

\addplot[color=red,mark=o] coordinates{(1.1, 0.39449999999999996) (1.2, 0.25649999999999995) (1.3, 0.153) (1.4, 0.1025) (1.5, 0.0875) (1.6, 0.0605) (1.7000000000000002, 0.0685) (1.8, 0.07050000000000001) (1.9, 0.058499999999999996)};
\addlegendentry[font=\tiny,anchor=west]{{\ttfamily q=0.05,RBR(r)}}

\addplot[color=green,mark=o] coordinates{(1.1, 0.2845) (1.2, 0.11000000000000001) (1.3, 0.052500000000000005) (1.4, 0.027500000000000004) (1.5, 0.0315) (1.6, 0.018000000000000002) (1.7000000000000002, 0.014000000000000002) (1.8, 0.01) (1.9, 0.009)};
\addlegendentry[font=\tiny,anchor=west]{{\ttfamily q=0.1,RBR(r)}}

\addplot[color=blue,mark=o] coordinates{(1.1, 0.19849999999999998) (1.2, 0.0485) (1.3, 0.025500000000000002) (1.4, 0.015000000000000003) (1.5, 0.009) (1.6, 0.003999999999999999) (1.7000000000000002, 0.0035000000000000005) (1.8, 0.0055) (1.9, 0.0025)};
\addlegendentry[font=\tiny,anchor=west]{{\ttfamily q=0.15,RBR(r)}}

\addplot[color=black,mark=o] coordinates{(1.1, 0.11599999999999999) (1.2, 0.0395) (1.3, 0.018500000000000003) (1.4, 0.008000000000000002) (1.5, 0.002) (1.6, 0.0035000000000000005) (1.7000000000000002, 0.003) (1.8, 0.0) (1.9, 0.0015)};
\addlegendentry[font=\tiny,anchor=west]{{\ttfamily q=0.2,RBR(r)}}

\end{axis}

		\end{tikzpicture}
	}
	\hfill
	\subfigure[2 communities, 450 nodes per community]{
		\begin{tikzpicture}[scale=0.7]
		
\begin{axis}[
grid,
xmin=1.1,
xlabel=shape,
ylabel=misclassification rate,
legend style={nodes={inner sep=0.5pt}}
]

\addplot[color=red,mark=x] coordinates{(1.1, 0.35978) (1.2, 0.11889) (1.3, 0.074889) (1.4, 0.044444) (1.5, 0.023111) (1.6, 0.013778) (1.7000000000000002, 0.012) (1.8, 0.0084444) (1.9, 0.0073333)};
\addlegendentry[font=\tiny,anchor=west]{{\ttfamily q=0.05,CMM}}

\addplot[color=green,mark=x] coordinates{(1.1, 0.22) (1.2, 0.026) (1.3, 0.014) (1.4, 0.004) (1.5, 0.0011111) (1.6, 0.00088889) (1.7000000000000002, 0.00066667) (1.8, 0.00022222) (1.9, 0.00044444)};
\addlegendentry[font=\tiny,anchor=west]{{\ttfamily q=0.1,CMM}}

\addplot[color=blue,mark=x] coordinates{(1.1, 0.11444) (1.2, 0.030444) (1.3, 0.0026667) (1.4, 0.00066667) (1.5, 0.00044444) (1.6, 0.00022222) (1.7000000000000002, 0.00022222) (1.8, 0.00022222) (1.9, 0)};
\addlegendentry[font=\tiny,anchor=west]{{\ttfamily q=0.15,CMM}}

\addplot[color=black,mark=x] coordinates{(1.1, 0.071778) (1.2, 0.0055556) (1.3, 0.0055556) (1.4, 0.00044444) (1.5, 0.00044444) (1.6, 0.00022222) (1.7000000000000002, 0.00088889) (1.8, 0.00066667) (1.9, 0.00022222)};
\addlegendentry[font=\tiny,anchor=west]{{\ttfamily q=0.2,CMM}}

\addplot[color=red,mark=o] coordinates{(1.1, 0.2355556) (1.2, 0.07977759999999999) (1.3, 0.052000000000000005) (1.4, 0.0304446) (1.5, 0.013333399999999999) (1.6, 0.012) (1.7000000000000002, 0.008666799999999999) (1.8, 0.0071112) (1.9, 0.0084446)};
\addlegendentry[font=\tiny,anchor=west]{{\ttfamily q=0.05,RBR(r)}}

\addplot[color=green,mark=o] coordinates{(1.1, 0.1275558) (1.2, 0.0215556) (1.3, 0.0104446) (1.4, 0.004222) (1.5, 0.001111) (1.6, 0.0004444) (1.7000000000000002, 0.0008888) (1.8, 0.0002222) (1.9, 0.0004444)};
\addlegendentry[font=\tiny,anchor=west]{{\ttfamily q=0.1,RBR(r)}}

\addplot[color=blue,mark=o] coordinates{(1.1, 0.083111) (1.2, 0.0184446) (1.3, 0.0024441999999999997) (1.4, 0.0008888) (1.5, 0.0004444) (1.6, 0.0002222) (1.7000000000000002, 0.0002222) (1.8, 0.0) (1.9, 0.0)};
\addlegendentry[font=\tiny,anchor=west]{{\ttfamily q=0.15,RBR(r)}}

\addplot[color=black,mark=o] coordinates{(1.1, 0.0444446) (1.2, 0.004222000000000001) (1.3, 0.0039998) (1.4, 0.0004444) (1.5, 0.0004444) (1.6, 0.0002222) (1.7000000000000002, 0.0008888) (1.8, 0.0006666) (1.9, 0.0002222)};
\addlegendentry[font=\tiny,anchor=west]{{\ttfamily q=0.2,RBR(r)}}

\end{axis}

		\end{tikzpicture}
	}
	
	\vspace{.75cm}
	
	\centering
	\hfill
	\subfigure[3 communities, 200 nodes per community]{
		\begin{tikzpicture}[scale=0.7]
		
\begin{axis}[
grid,
xmin=1.1,
xlabel=shape,
ylabel=misclassification rate,
legend style={nodes={inner sep=0.5pt}}
]

\addplot[color=red,mark=x] coordinates{(1.1, 0.384) (1.2, 0.142) (1.3, 0.046333) (1.4, 0.027333) (1.5, 0.015333) (1.6, 0.015667) (1.7000000000000002, 0.0093333) (1.8, 0.0053333) (1.9, 0.006)};
\addlegendentry[font=\tiny,anchor=west]{{\ttfamily q=0.15,CMM}}

\addplot[color=green,mark=x] coordinates{(1.1, 0.226) (1.2, 0.054) (1.3, 0.013333) (1.4, 0.009) (1.5, 0.0036667) (1.6, 0.001) (1.7000000000000002, 0.00066667) (1.8, 0.00066667) (1.9, 0.00066667)};
\addlegendentry[font=\tiny,anchor=west]{{\ttfamily q=0.25,CMM}}

\addplot[color=blue,mark=x] coordinates{(1.1, 0.178) (1.2, 0.020333) (1.3, 0.003) (1.4, 0.0026667) (1.5, 0.003) (1.6, 0.001) (1.7000000000000002, 0.0016667) (1.8, 0.0013333) (1.9, 0.001)};
\addlegendentry[font=\tiny,anchor=west]{{\ttfamily q=0.35,CMM}}

\addplot[color=black,mark=x] coordinates{(1.1, 0.11433) (1.2, 0.014333) (1.3, 0.0036667) (1.4, 0.0013333) (1.5, 0.0026667) (1.6, 0.002) (1.7000000000000002, 0.001) (1.8, 0.0023333) (1.9, 0.0016667)};
\addlegendentry[font=\tiny,anchor=west]{{\ttfamily q=0.45,CMM}}

\addplot[color=red,mark=o] coordinates{(1.1, 0.32633340000000005) (1.2, 0.1210002) (1.3, 0.048666600000000004) (1.4, 0.026666600000000002) (1.5, 0.016666800000000002) (1.6, 0.014333400000000001) (1.7000000000000002, 0.0106666) (1.8, 0.0053334) (1.9, 0.0053332)};
\addlegendentry[font=\tiny,anchor=west]{{\ttfamily q=0.15,RBR(r)}}

\addplot[color=green,mark=o] coordinates{(1.1, 0.1826666) (1.2, 0.05233340000000001) (1.3, 0.013666800000000002) (1.4, 0.0093334) (1.5, 0.004) (1.6, 0.0013334) (1.7000000000000002, 0.0006668) (1.8, 0.0006668) (1.9, 0.0006668)};
\addlegendentry[font=\tiny,anchor=west]{{\ttfamily q=0.25,RBR(r)}}

\addplot[color=blue,mark=o] coordinates{(1.1, 0.1419998) (1.2, 0.022) (1.3, 0.0033334000000000003) (1.4, 0.003) (1.5, 0.0026668000000000004) (1.6, 0.001) (1.7000000000000002, 0.0016666) (1.8, 0.0013332) (1.9, 0.0010002000000000001)};
\addlegendentry[font=\tiny,anchor=west]{{\ttfamily q=0.35,RBR(r)}}

\addplot[color=black,mark=o] coordinates{(1.1, 0.08933340000000001) (1.2, 0.0116666) (1.3, 0.0046666) (1.4, 0.0013334) (1.5, 0.0026666000000000003) (1.6, 0.0026668) (1.7000000000000002, 0.0003334) (1.8, 0.0023331999999999997) (1.9, 0.0016668000000000002)};
\addlegendentry[font=\tiny,anchor=west]{{\ttfamily q=0.45,RBR(r)}}

\end{axis}

		\end{tikzpicture}
	}
	\hfill
	\subfigure[4 communities, 200 nodes per community]{
		\begin{tikzpicture}[scale=0.7]
		
\begin{axis}[
grid,
xmin=1.1,
xlabel=shape,
ylabel=misclassification rate,
legend style={nodes={inner sep=0.5pt}}
]

\addplot[color=red,mark=x] coordinates{(1.1, 0.604) (1.2, 0.24675) (1.3, 0.13675) (1.4, 0.0685) (1.5, 0.0575) (1.6, 0.051) (1.7000000000000002, 0.04025) (1.8, 0.02375) (1.9, 0.024)};
\addlegendentry[font=\tiny,anchor=west]{{\ttfamily q=0.1,CMM}}

\addplot[color=green,mark=x] coordinates{(1.1, 0.2655) (1.2, 0.083) (1.3, 0.03375) (1.4, 0.0155) (1.5, 0.00875) (1.6, 0.007) (1.7000000000000002, 0.0025) (1.8, 0.00275) (1.9, 0.0015)};
\addlegendentry[font=\tiny,anchor=west]{{\ttfamily q=0.2,CMM}}

\addplot[color=blue,mark=x] coordinates{(1.1, 0.23375) (1.2, 0.0395) (1.3, 0.01475) (1.4, 0.00475) (1.5, 0.0025) (1.6, 0.00175) (1.7000000000000002, 0.00075) (1.8, 0.0005) (1.9, 0.001)};
\addlegendentry[font=\tiny,anchor=west]{{\ttfamily q=0.3,CMM}}

\addplot[color=black,mark=x] coordinates{(1.1, 0.1415) (1.2, 0.02375) (1.3, 0.00725) (1.4, 0.0035) (1.5, 0.00275) (1.6, 0.0015) (1.7000000000000002, 0.00125) (1.8, 0.0015) (1.9, 0.00175)};
\addlegendentry[font=\tiny,anchor=west]{{\ttfamily q=0.4,CMM}}

\addplot[color=red,mark=o] coordinates{(1.1, 0.5309999999999999) (1.2, 0.21575000000000003) (1.3, 0.11950000000000001) (1.4, 0.06899999999999999) (1.5, 0.06) (1.6, 0.04575) (1.7000000000000002, 0.04125) (1.8, 0.026500000000000003) (1.9, 0.025)};
\addlegendentry[font=\tiny,anchor=west]{{\ttfamily q=0.1,RBR(r)}}

\addplot[color=green,mark=o] coordinates{(1.1, 0.27075) (1.2, 0.0825) (1.3, 0.0305) (1.4, 0.016) (1.5, 0.007999999999999998) (1.6, 0.007250000000000001) (1.7000000000000002, 0.0025) (1.8, 0.0025) (1.9, 0.0015)};
\addlegendentry[font=\tiny,anchor=west]{{\ttfamily q=0.2,RBR(r)}}

\addplot[color=blue,mark=o] coordinates{(1.1, 0.21625) (1.2, 0.03525) (1.3, 0.013250000000000001) (1.4, 0.004750000000000001) (1.5, 0.0022500000000000003) (1.6, 0.0015) (1.7000000000000002, 0.00075) (1.8, 0.0005) (1.9, 0.00075)};
\addlegendentry[font=\tiny,anchor=west]{{\ttfamily q=0.3,RBR(r)}}

\addplot[color=black,mark=o] coordinates{(1.1, 0.12175) (1.2, 0.02425) (1.3, 0.00775) (1.4, 0.0032500000000000003) (1.5, 0.00275) (1.6, 0.002) (1.7000000000000002, 0.00125) (1.8, 0.0015) (1.9, 0.0017500000000000003)};
\addlegendentry[font=\tiny,anchor=west]{{\ttfamily q=0.4,RBR(r)}}

\end{axis}

		\end{tikzpicture}
	}
	
\end{figure}

The performance of our method is shown in Figure \ref{fig:syn}. The y-axis is
the misclassification rate while the x-axis is the shape parameter $\alpha$ by
which we generate the random graphs. The circle solid line is our RBR algorithm with rounding. The line with ``x'' marker stands for the
CMM algorithm. 
The $q$ in the figure denotes the intra-cluster connection probability. The inter-cluster connection probability is set to be $0.3q$.  It is clear that in all these cases, our approach outperforms the CMM algorithm, especially when in class connection probability is small, i.e., the class structure is not very strong.
Table \ref{tab:syn01} and \ref{tab:syn02} are the results under $q=0.1,\alpha=1.4$ and
$q=0.1,\alpha=1.8$, respectively. We can see that CMM and our method have similar
performances. Both of them are better than SCORE and OCCAM. If the shape parameter
is increased from 1.4 to 1.8, all of the four methods perform better, but our
method still has the smallest misclassification rate.
\begin{table}[!htpb]
	\centering
	\caption{Misclassification rate and runtime, $q=0.1,\alpha=1.4$} \label{tab:syn01}
	\begin{tabular}{r||cc||cc||cc||cc}
		\hline
        
 & \multicolumn{2}{c||}{$n=200,k=2$} & \multicolumn{2}{c||}{$n=450,k=2$} & \multicolumn{2}{c||}{$n=200,k=3$} & \multicolumn{2}{c}{$n=200,k=4$}\\ \cline{2-9}
solver & err & Time & err & Time & err & Time & err & Time\\ \hline
CMM &1.06\% & 5.10 & 0.04\% & 22.05 & 0.30\% & 10.43 & 0.25\% & 18.37\\
RBR(r) &0.90\% & 0.01 & 0.04\% & 0.03 & 0.27\% & 0.03 & 0.23\% & 0.07\\
RBR(K) &1.05\% & 1.35 & 0.04\% & 4.33 & 0.30\% & 3.00 & 0.25\% & 4.37\\
OCCAM &1.50\% & 0.02 & 20.02\% & 0.05 & 19.13\% & 0.04 & 22.35\% & 0.08\\
SCORE &1.50\% & 0.02 & 18.02\% & 0.03 & 7.43\% & 0.05 & 14.90\% & 0.09\\
\hline  

	\end{tabular}
\end{table}

\begin{table}[!htpb]
	\centering
	\caption{Misclassification rate and runtime, $q=0.1,\alpha=1.8$} \label{tab:syn02}
	\begin{tabular}{r||cc||cc||cc||cc}
		\hline

 & \multicolumn{2}{c||}{$n=200,k=2$} & \multicolumn{2}{c||}{$n=450,k=2$} & \multicolumn{2}{c||}{$n=200,k=3$} & \multicolumn{2}{c}{$n=200,k=4$}\\ \cline{2-9}
solver & err & Time & err & Time & err & Time & err & Time\\ \hline
CMM &0.25\% & 5.20 & 0.00\% & 22.77 & 0.10\% & 10.44 & 0.10\% & 18.66\\
RBR(r) &0.25\% & 0.01 & 0.00\% & 0.04 & 0.10\% & 0.03 & 0.07\% & 0.06\\
RBR(K) &0.15\% & 1.41 & 0.00\% & 4.76 & 0.10\% & 3.24 & 0.10\% & 5.08\\
OCCAM &0.45\% & 0.02 & 0.00\% & 0.04 & 0.13\% & 0.04 & 2.88\% & 0.06\\
SCORE &0.45\% & 0.02 & 0.02\% & 0.03 & 0.27\% & 0.04 & 2.12\% & 0.05\\
\hline 
	\end{tabular}
\end{table}

\subsection{Results on Small-Scaled Real World Data}

In this section, we test the empirical performances of CMM,
SCORE, OCCAM, RBR(r) and RBR(K) on the US political
blog network dataset from \cite{snapnets}, the Simmons College network 
and the Caltech network from Facebook dataset. The properties of these
networks are shown in Table \ref{tab:small-networks}, where $nc$ is
the number of real communities.

\begin{table}[h] 
\centering
\caption{Small-scale real-world networks with ground truth}
\label{tab:small-networks}
\begin{tabular}{|l||r|r|r|l|}
\hline
Name      & nodes & edges  & nc & feature of community stucture         \\ \hline
Polblogs  & 1222 & 16714 & 2  & political leaning                     \\ 
Simmons & 1168 & 24449 & 4  & graduation year between 2006 and 2009 \\ 
Caltech & 597  & 12823 & 8  & total 8 dorm numbers                  \\  \hline
\end{tabular}
\end{table}

We run our RBR algorithms for $10$ times as a batch starting from
 different randomly initial points. The one with the highest modularity
is chosen as the result in this batch. This procedure is repeated $300$ times.  The histograms of the misclassification rates of RBR(r) are shown in Figure
\ref{hist-all}. 
 A summary of the
 misclassification rates and the cpu time of all algorithms is presented in Table
 \ref{comp_s}, where the results for RBR(r) and RBR(K) are the averaged values.
 \begin{table}[!htb]
\setlength{\abovecaptionskip}{0.cm}
\setlength{\belowcaptionskip}{-0.cm}
\centering
\caption{Overall comparison between the five methods}
\label{comp_s}
\begin{tabular}{|l||c|c||c|c||c|c|}
\hline
\multirow{2}{*}{} & \multicolumn{2}{c||}{Polblogs} & \multicolumn{2}{c||}{Simmons} & \multicolumn{2}{c|}{Caltech} \\ \cline{2-7} 
                  & err    & Time    & err    & Time   & err    & Time   \\ \hline
CMM               & 4.99\%        & 50.12        & 13.10\%       & 53.74       & 21.94\%       & 14.22        \\ 
SCORE             & 4.75\%        & 0.05          & 23.92\%       & 0.11         & 27.18\%       & 0.17         \\ 
OCCAM             & 5.32\%        & 0.10          & 24.36\%       & 0.16         & 35.53\%       & 0.10         \\ 
RBR(K)     & 5.09\%        & 1.42          & 12.56\%       & 10.53        & 16.51\%       & 1.08         \\
RBR(r)      & 4.75\%        & 0.04          & 14.00\%       & 0.08         & 21.03\%       & 0.06         \\ 
\hline
\end{tabular}
\end{table}

 On the polblogs network, the smallest misclassification rate  of  RBR(r) method is 4.4\% while its averaged value is 4.75\%.
 For the  Caltech network, we use dorms as the ground truth cluster
labels according to \cite{Traud}. The nodes are partitioned into 8 groups. The
averaged misclassfication rate of RBR(K) and RBR(r) are  16.54\% and 21.03\%,
respectively, whereas  CMM, SCORE and OCCAM have higher error rate of 21.94\%,
27.18\% and 35.53\%, respectively. In particular, the smallest error by RBR(K)
is 15.55\%.
  For the Simmons College network, the graduation year ranging from 2006 to 2009 is
  treated as cluster labels for its strong correlation with the community
  structure observed in \cite{Traud}. RBR(r), RBR(K), CMM, SCORE and OCCAM
  misclassified 14.00\%, 12.56\%, 13.10\%, 23.92\% and 24.36\% of the nodes on
  average, respectively. The smallest error of RBR(K) reaches 11.5\%.
\begin{figure}[!htb]
	\caption{Histograms of misclassification rates}
	\label{hist-all}
		\includegraphics[width=0.45\textwidth]{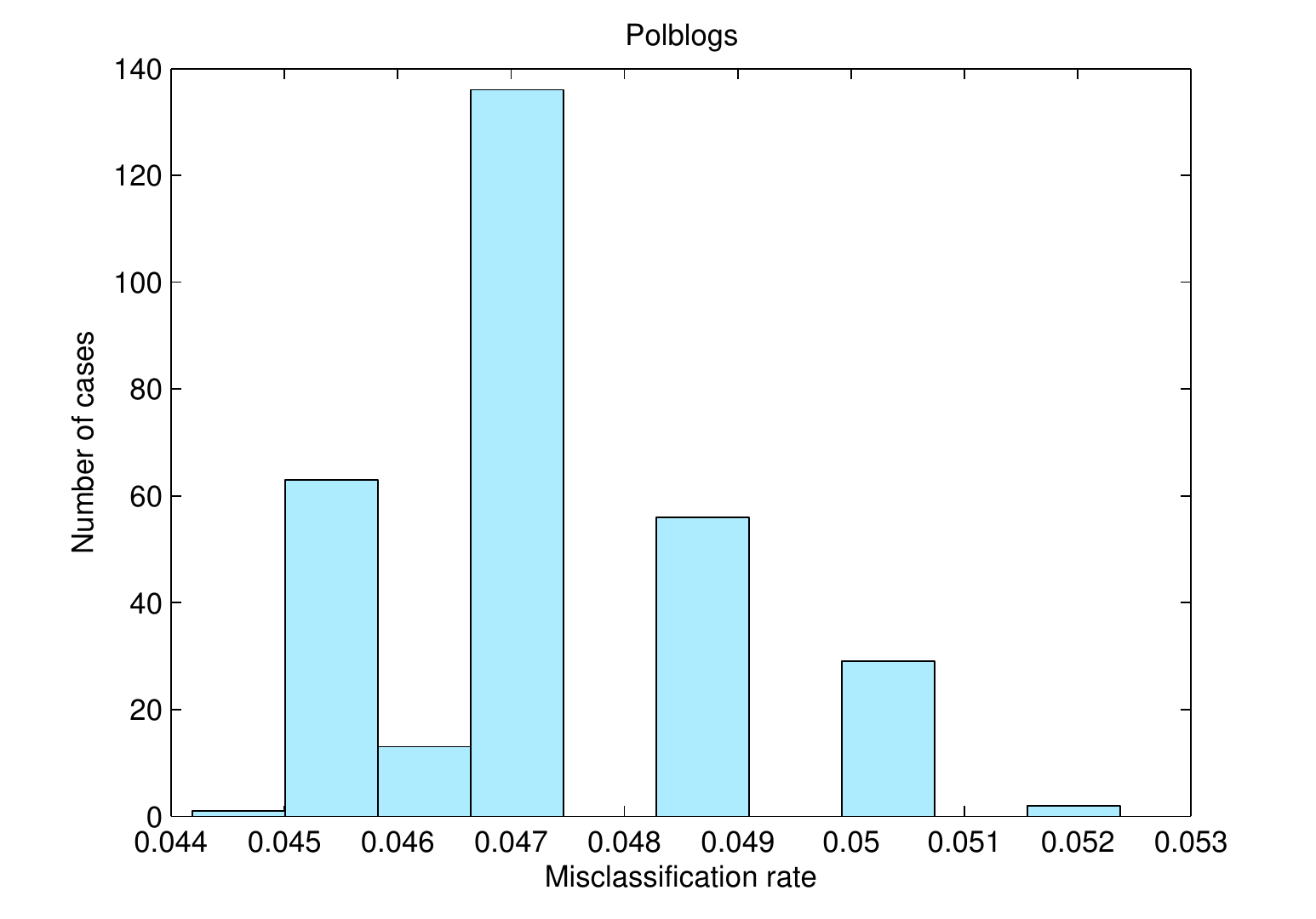}
		\includegraphics[width=0.45\textwidth]{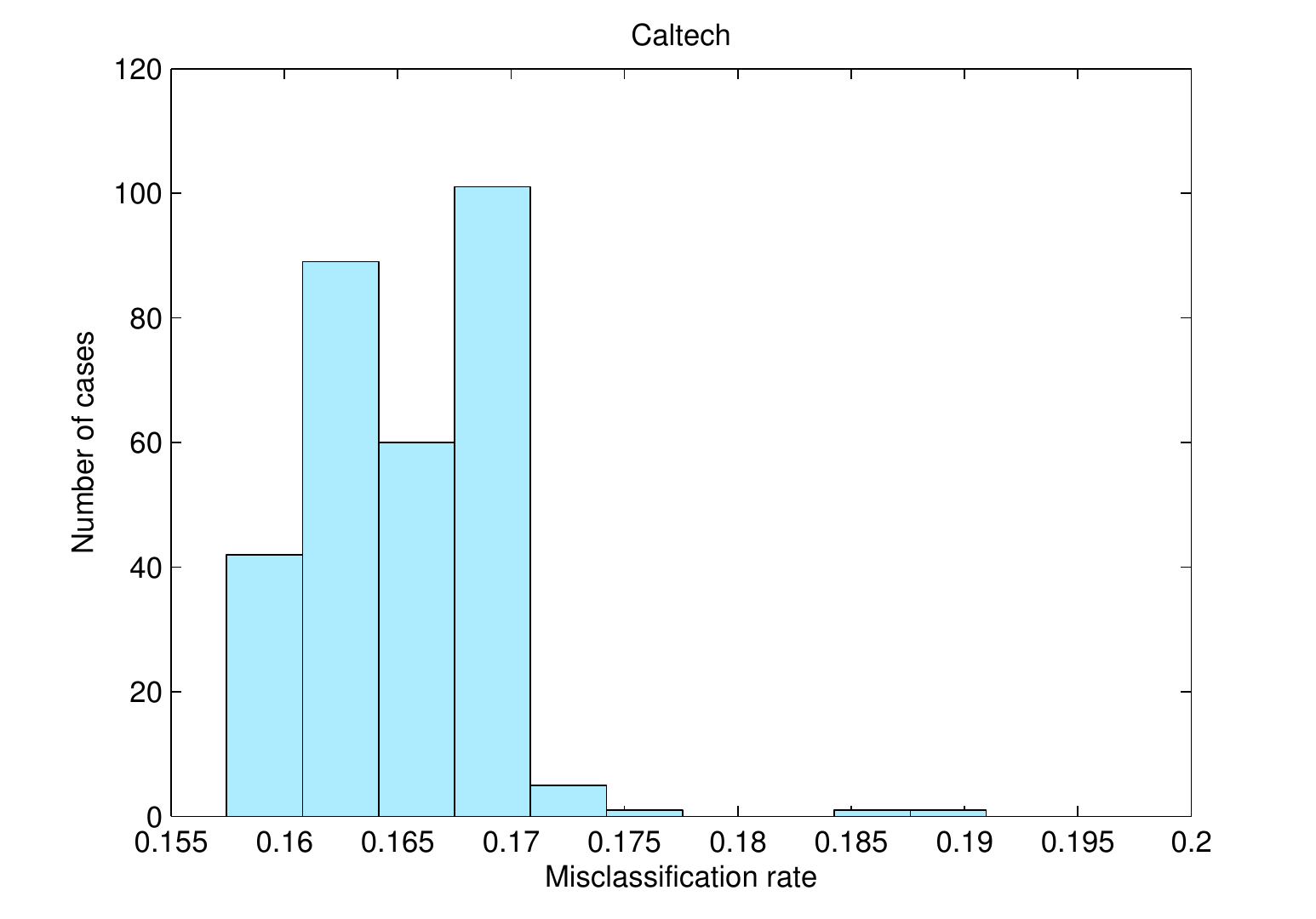}\\
        \begin{center} \includegraphics[width=0.45\textwidth]{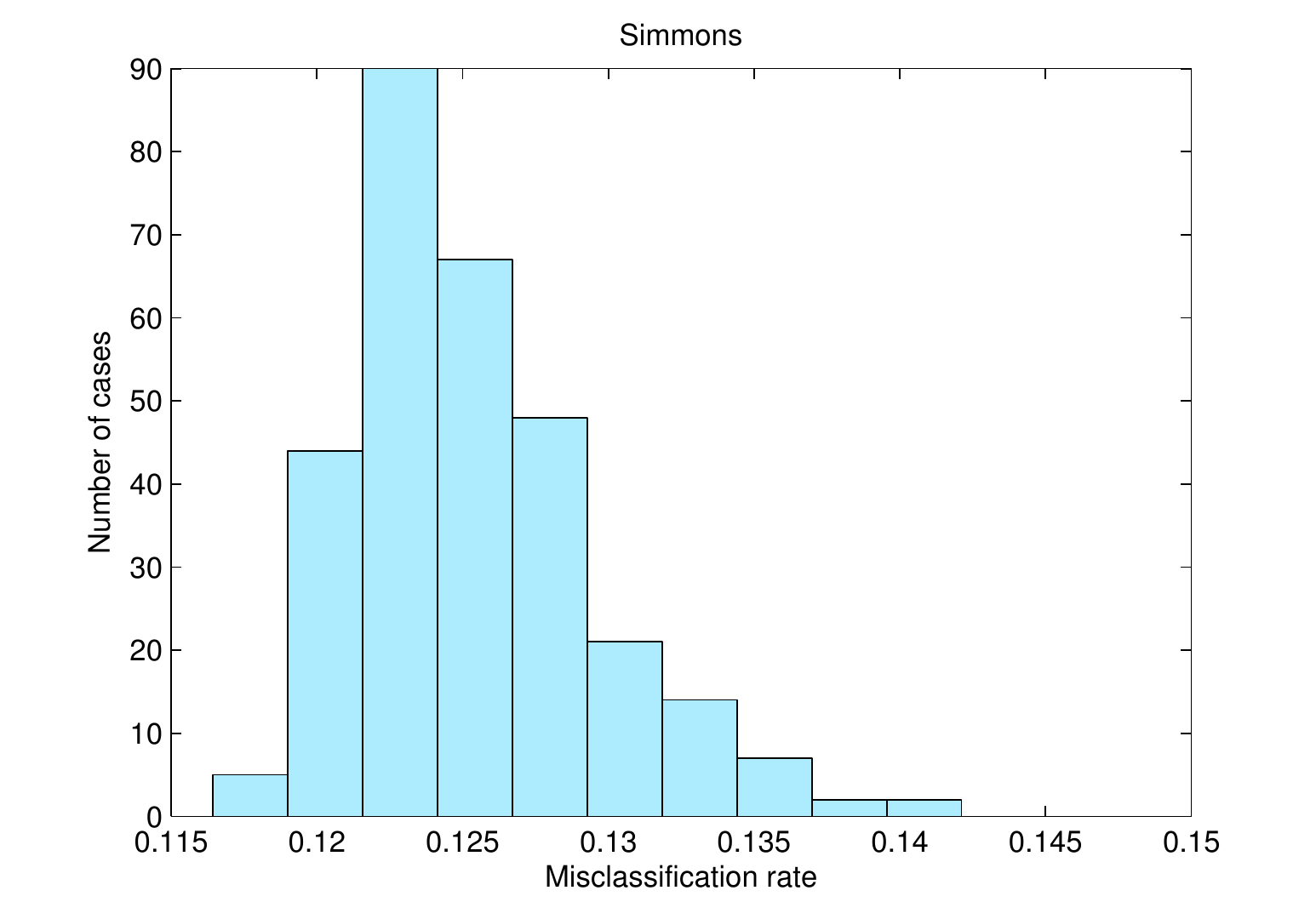}
        \end{center}
\end{figure}



Finally, the confusion matrices of CMM, SCORE and RBR(r) on these networks are presented in
Figures \ref{pol_mat}, \ref{cal_mat} and \ref{sim_mat}, respectively. The
results for OCCAM and RBR(K) are not shown due to their similarity with SCORE
and RBR(r), respectively. We can see that
RBR(r) is indeed competitive on identifying good communities.

\begin{figure}[!htb]
\centering
	\includegraphics[width=.31\textwidth]{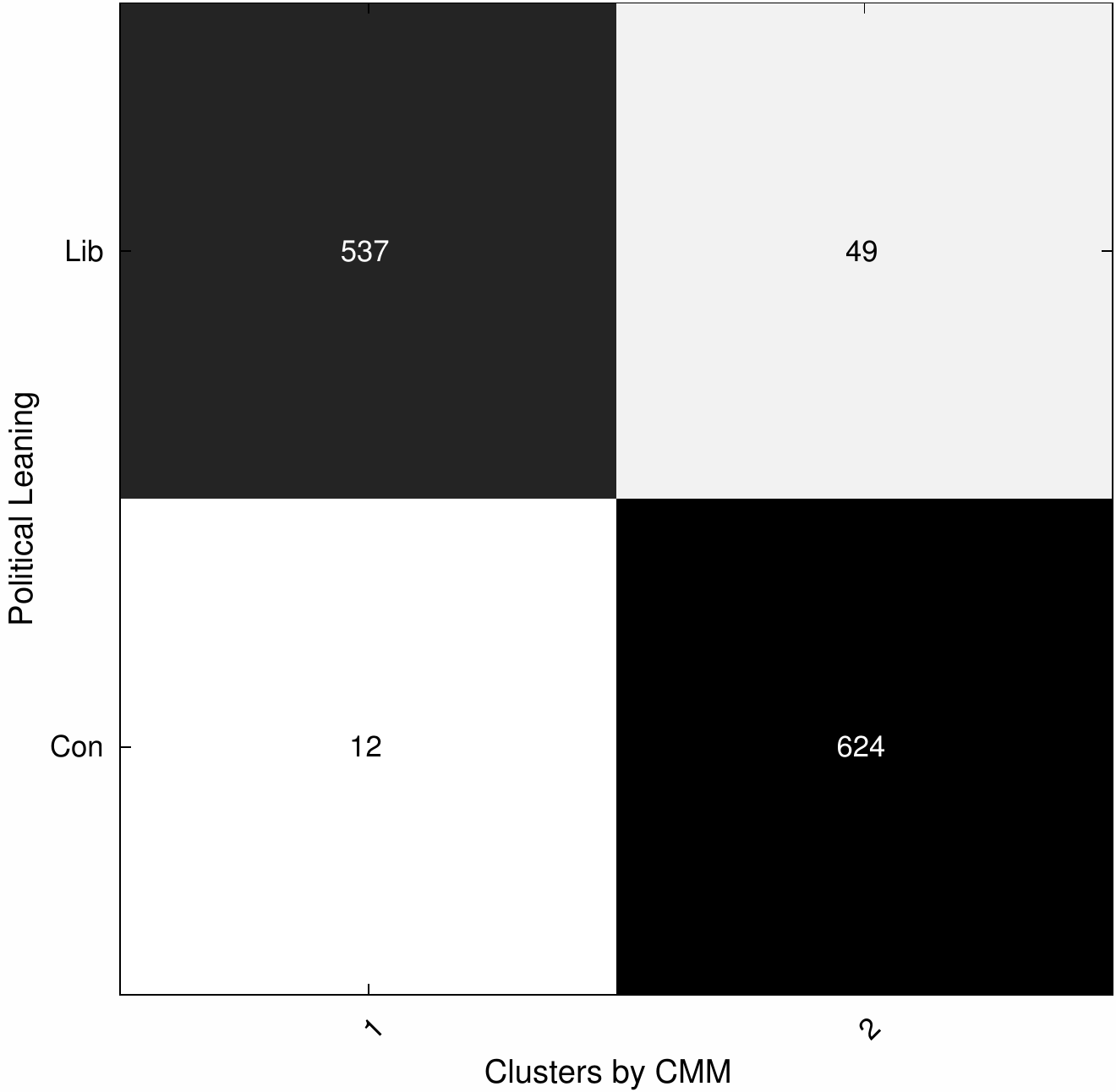}\hspace{1.1ex}
	\includegraphics[width=.31\textwidth]{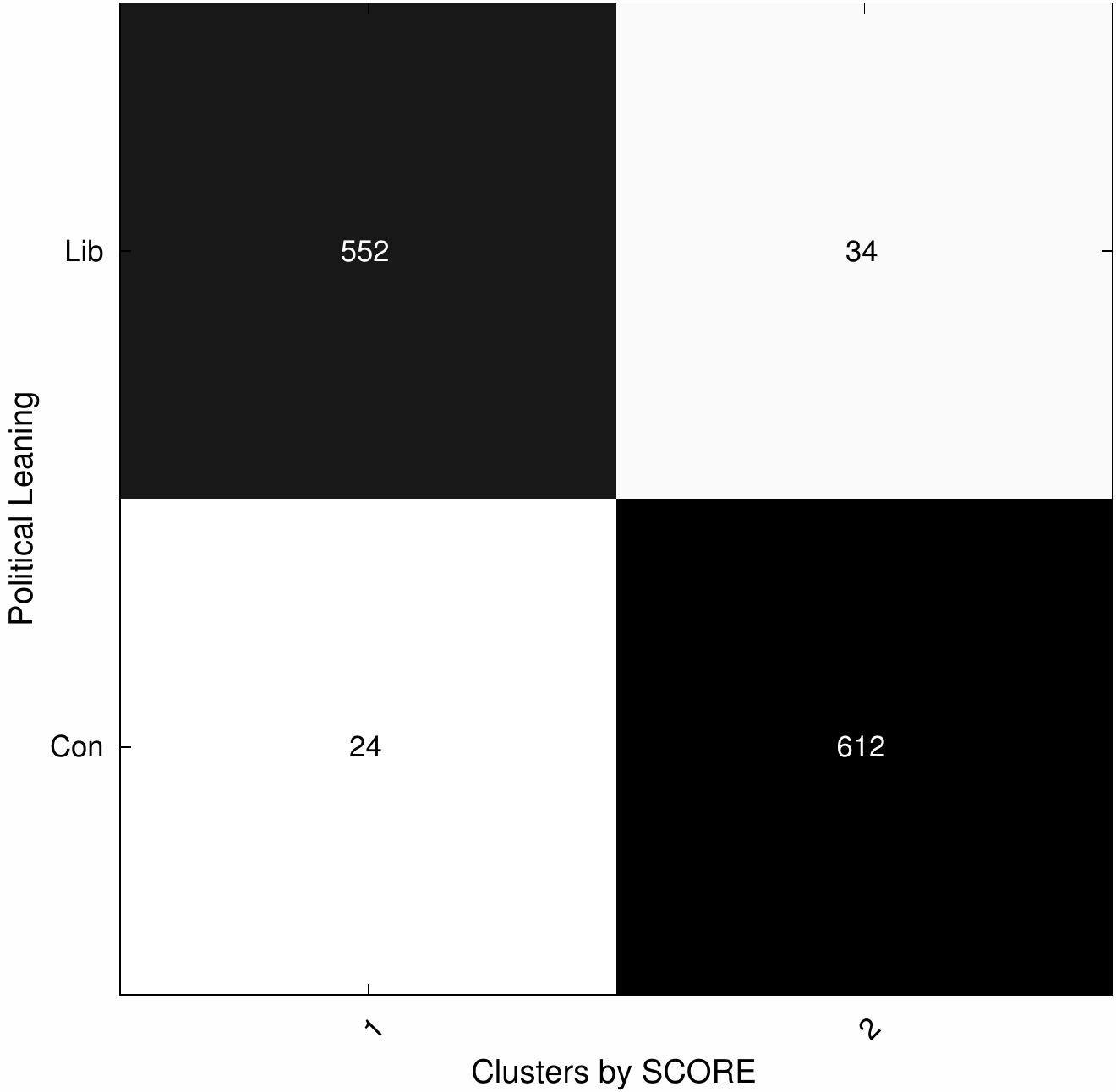}\hspace{1.1ex}
	\includegraphics[width=.31\textwidth]{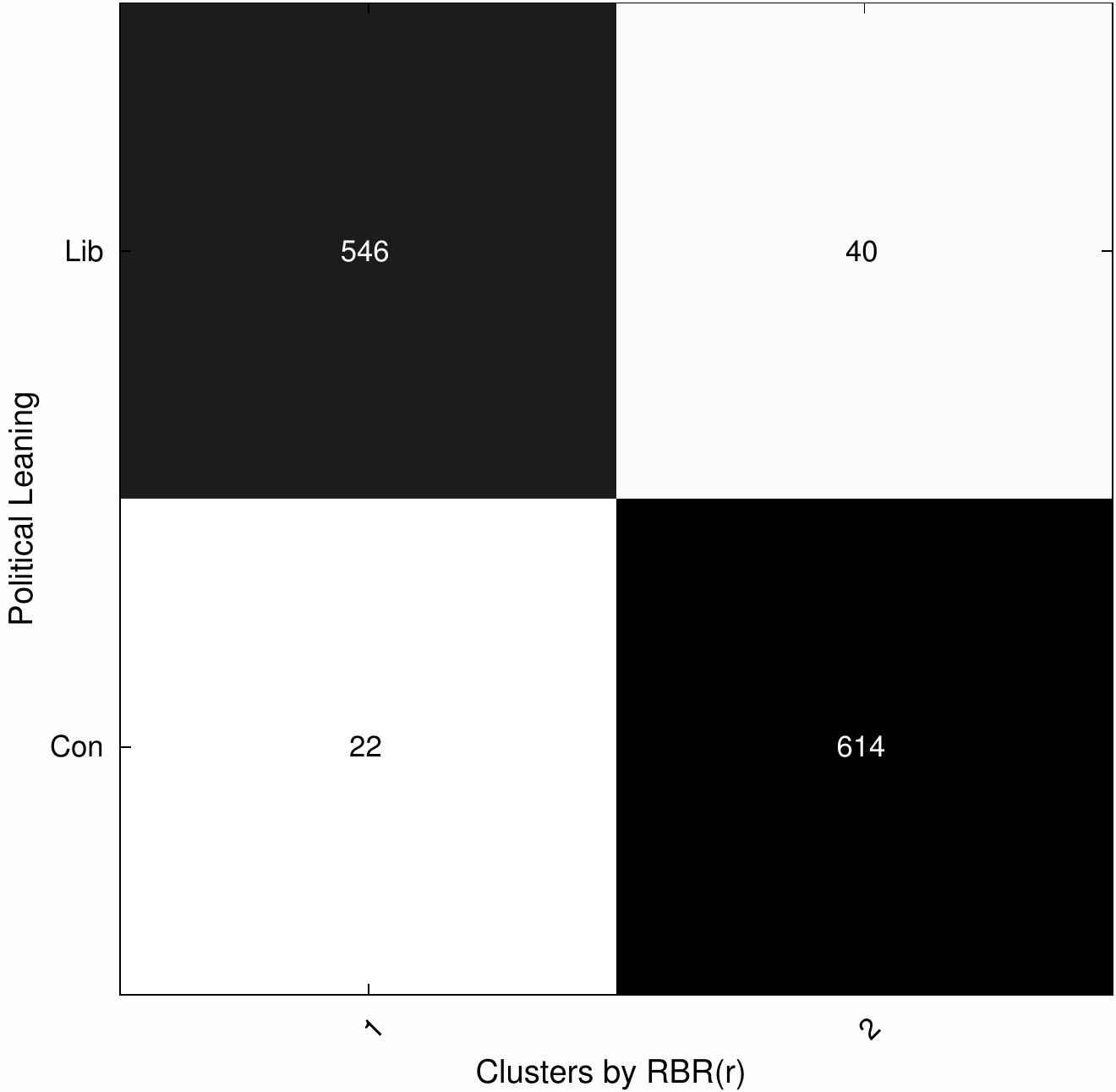}
	\caption{The confusion matrices of CMM, SCORE and RBR(r) on Polblogs network. }
\label{pol_mat}
\end{figure}

\begin{figure}[!htb]
\centering
	\includegraphics[width=.31\textwidth]{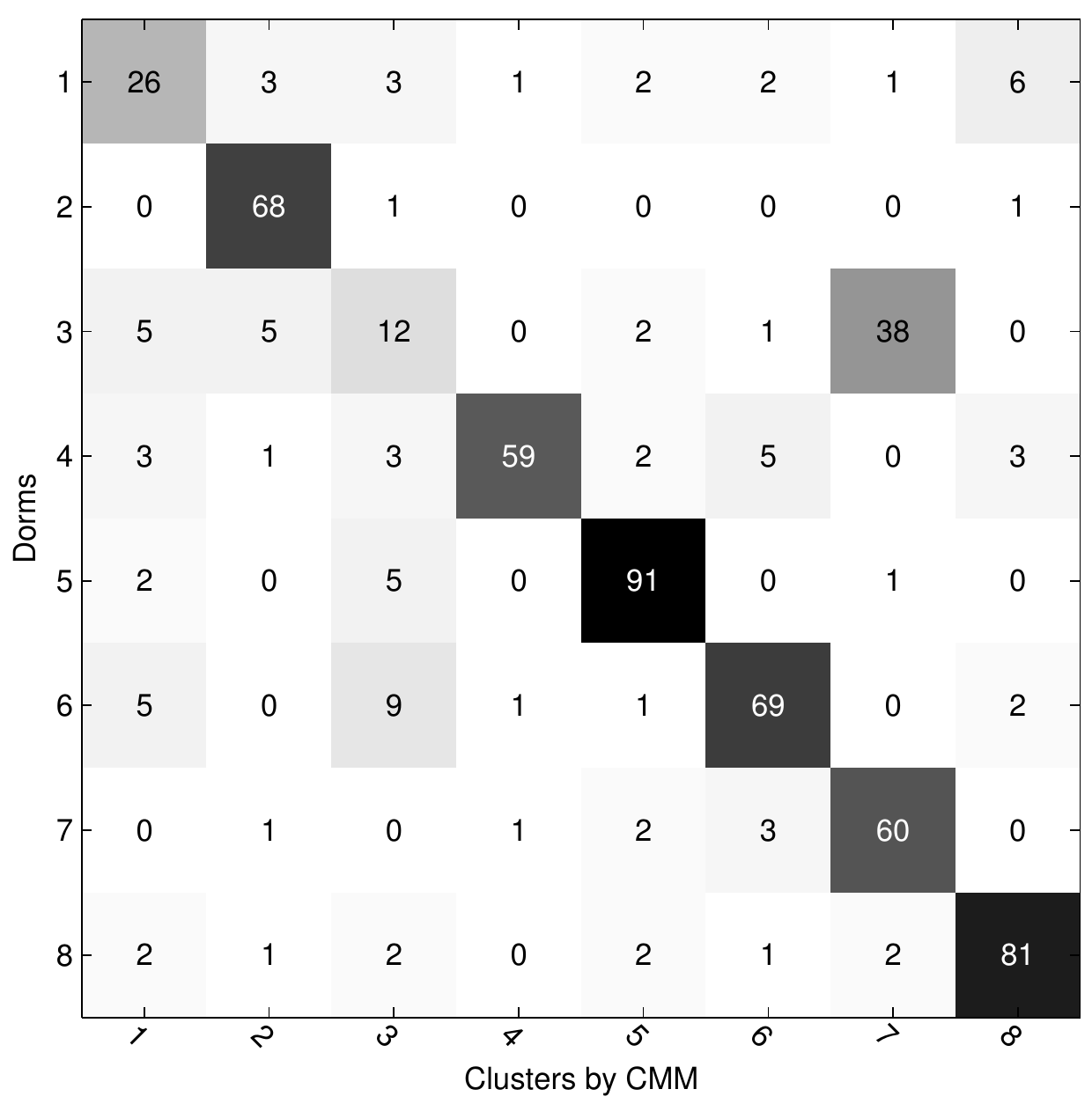}\hspace{1.1ex}
	\includegraphics[width=.31\textwidth]{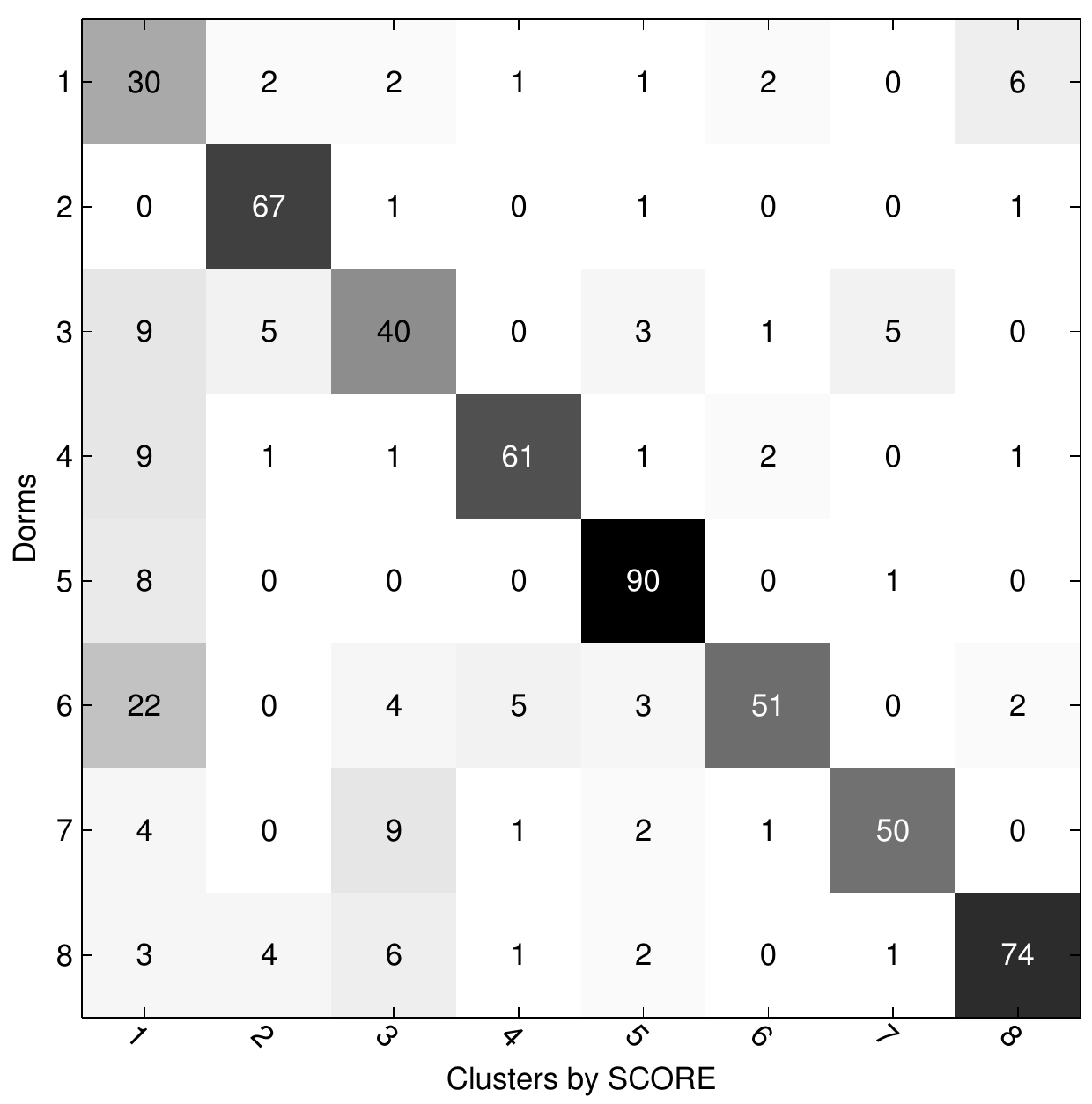}\hspace{1.1ex}
	\includegraphics[width=.31\textwidth]{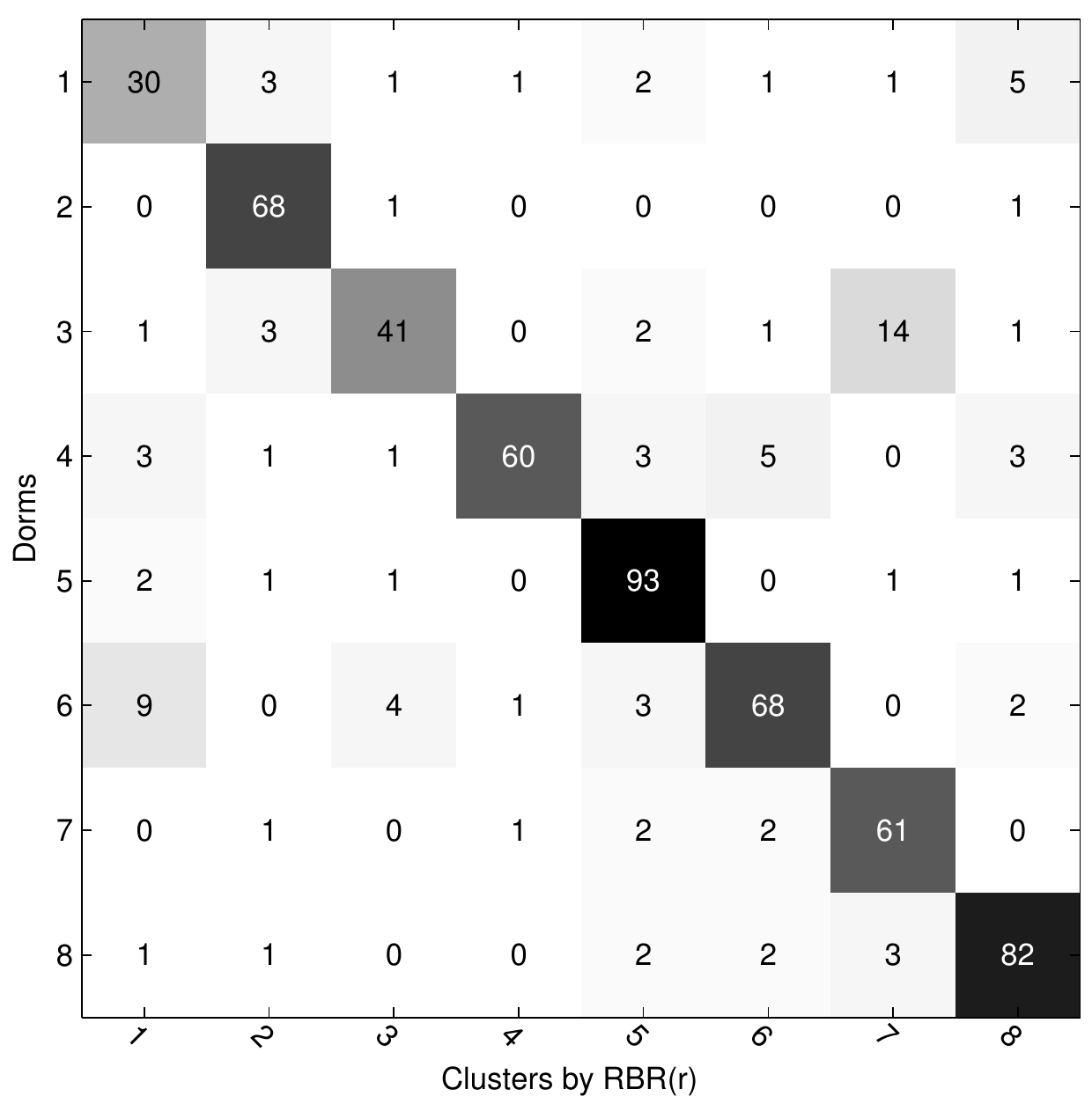}
	\caption{The confusion matrices of CMM, SCORE and RBR(r) on Caltech network. }
\label{cal_mat}
\end{figure}

\begin{figure}[!htb]
	\centering
	\includegraphics[width=.31\textwidth]{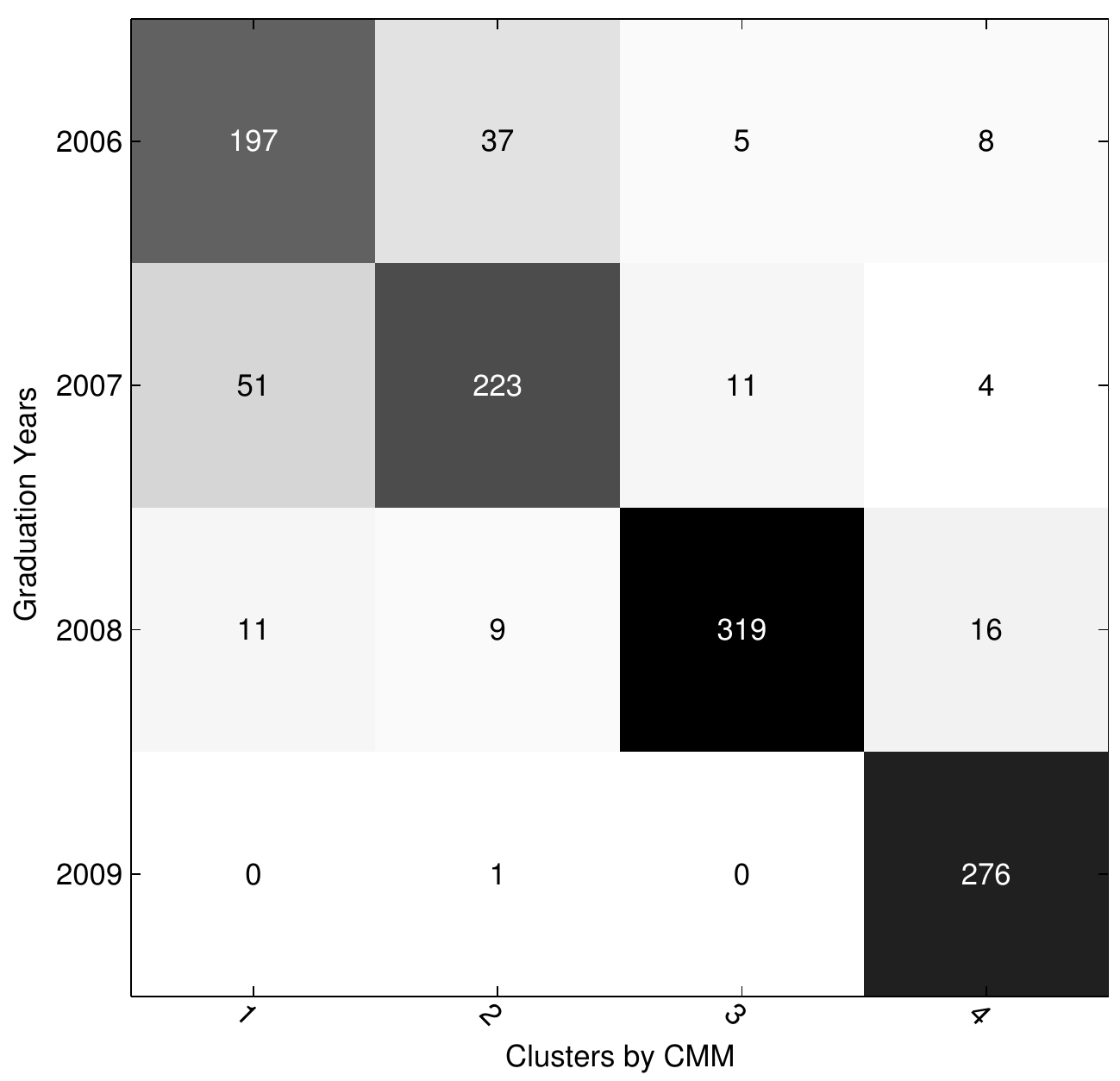}\hspace{1.1ex}
	\includegraphics[width=.31\textwidth]{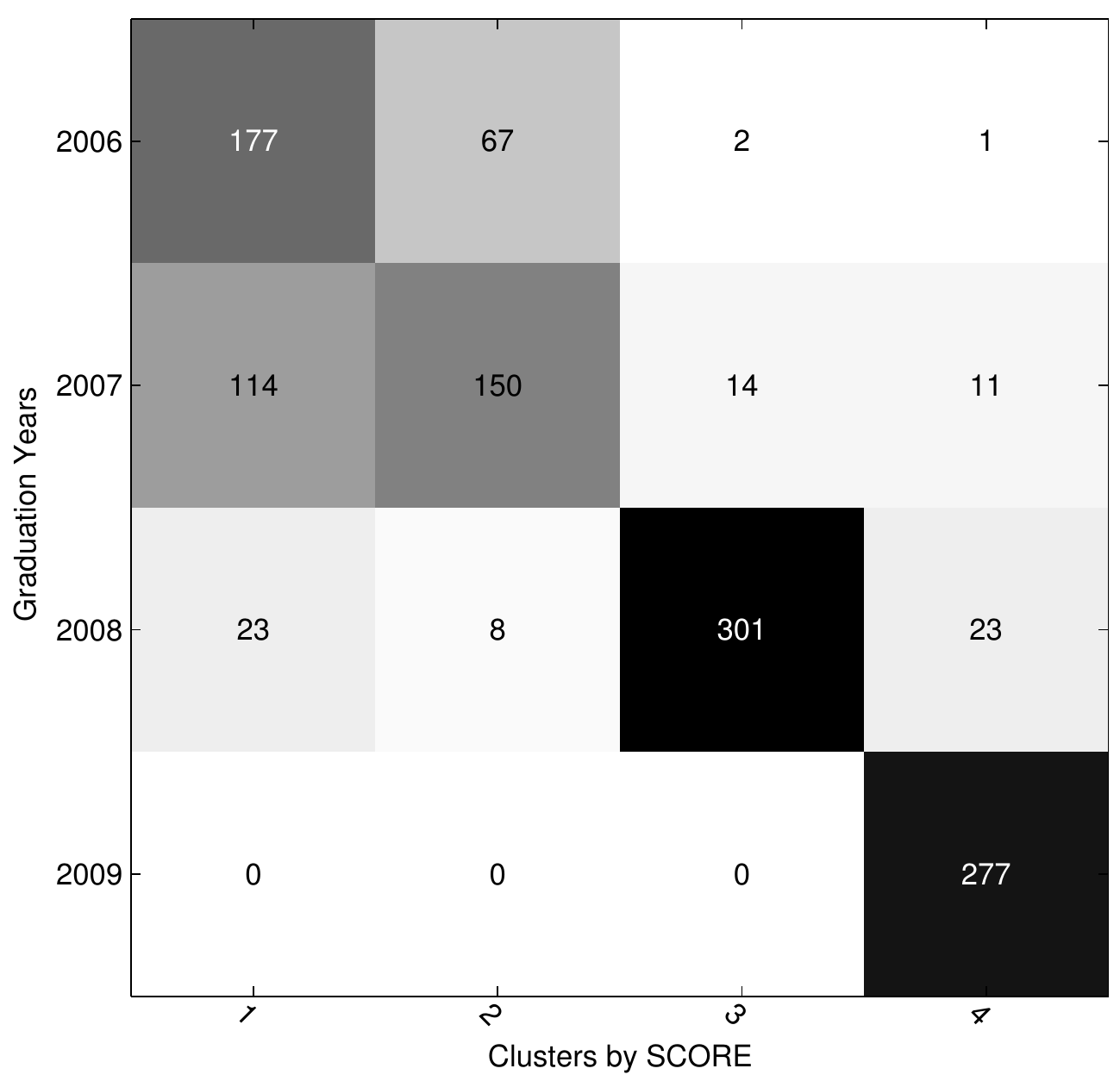}\hspace{1.1ex}
	\includegraphics[width=.31\textwidth]{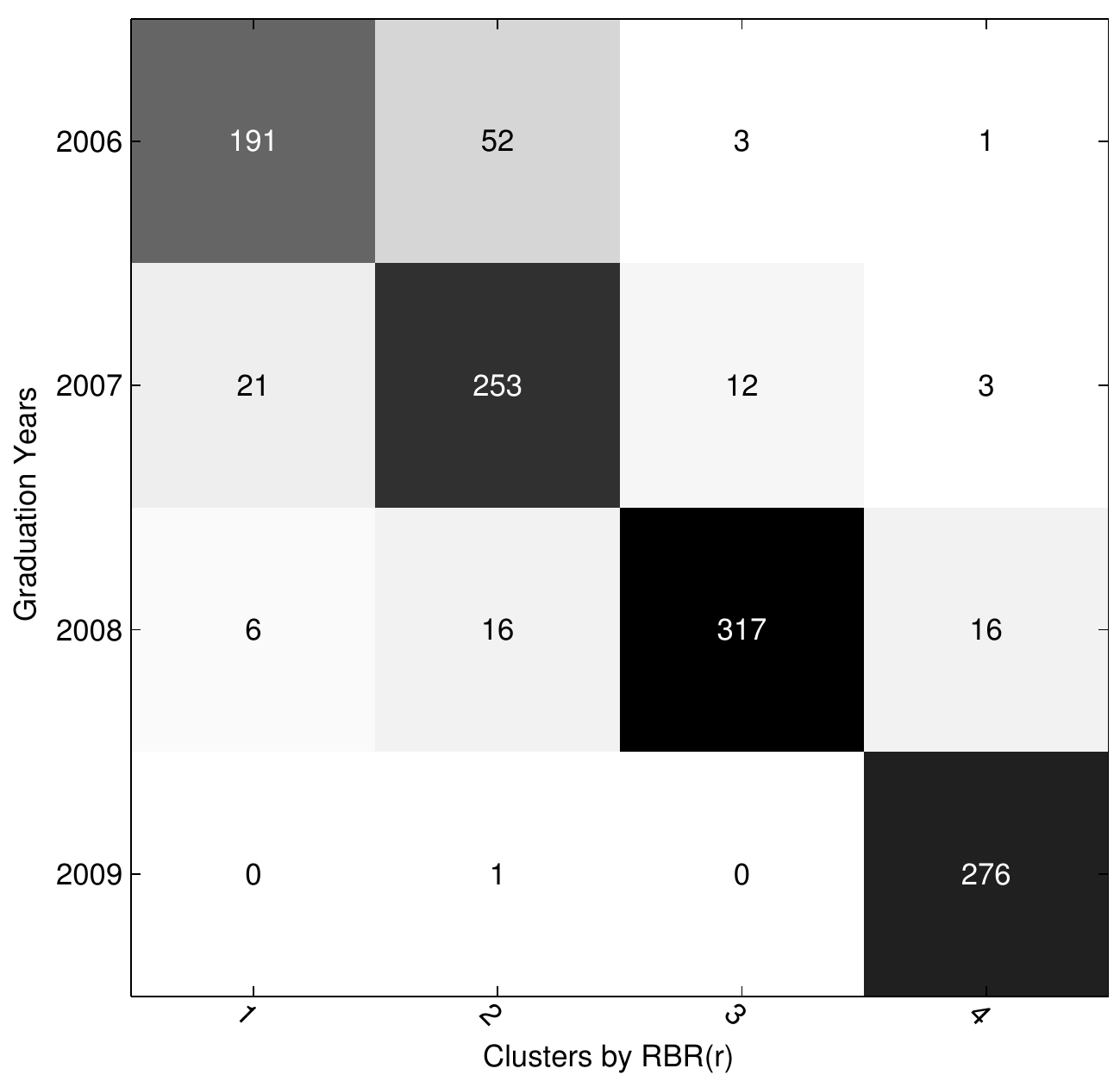}
	\caption{The confusion matrices of CMM, SCORE and RBR(r) on Simmons College network.}
	\label{sim_mat}
\end{figure}


\subsection{Results on Large-Scaled Data}
For large-sized network, we choose ten undirected
networks from Stanford Large Network Dataset Collection \cite{snapnets}
and UF Sparse Matrix Collection \cite{UFSparse}. A detailed description
of these networks can be found in Table \ref{tab:networks}. The number
of nodes in the dataset varies from 36 thousand to 50 million. Due to the size of these networks, we run the RBR(r) algorithm on these networks for only once. 
\begin{table}[h]
  \setlength{\tabcolsep}{2pt} 
\caption{Large-scale networks}
\label{tab:networks}
\centering
{\small
\begin{tabular}{|l||r|r|l|}
\hline
Name        & nodes   & edges     & Descriptions         \\ \hline
amazon      & 334,863 & 925,872   & Amazon product network \\
DBLP        & 317,080 & 1,049,866 & DBLP collaboration network \\
email-Enron & 36,692  & 183,831   & Email communication network from Enron \\
loc-Gowalla & 196,591 & 950,327   & Gowalla location based online social network \\
loc-Brightkite & 58,228 & 214,078 & Brightkite location based online social network \\
youtube     & 1,134,890 & 2,987,624 & Youtube online social network \\
LiveJournal & 3,997,962 & 34,681,189 & LiveJournal online social network \\
Delaunay\_n24 & 16,777,216 & 50,331,601 & Delaunay triangulations of random points in the plane \\     
road\_usa   & 23,947,347 & 28,854,312 & Road network of USA \\
europe\_osm & 50,912,018 & 54,054,660 & Road network of Europe \\ \hline
\end{tabular}
}
\end{table}

For these large networks, we do not have the true partition. Therefore we employ three metrics to evaluate the 
quality of the clustering.

\begin{itemize}
\item \textbf{CC}: \emph{Cluster Coefficient(CC)} is defined as
\begin{equation}
CC=\frac{1}{nC}\sum_{i=1}^{nC}\left(
\frac{1}{|C_i|}\sum_{v\in C_i}\frac{2|\{e_{ts}\in E~:~v_t,v_s\in N(v)\cap C_i\}|}{d(v)(d(v)-1)}
\right)
\end{equation}
where $nC$ is total number of detected communities;
$|C_i|$ is the cardinality of the community $C_i$;
$N(v)$ is the set that contains all neighbors of 
node $v$.
Thus, high CC means that the connections in each cluster
are dense. That is, it assumes that for any $v$ within class $C$,
the neighborhood of $v$ should have the structure
of a complete graph. In some sparse graphs such as road nets, the CC value may be small.

\item \textbf{S}: \emph{Strength(S)} shows that a
community is valid if most nodes have the same label
 as their neighbors. Denote $d(v)^{in}$ and $d(v)^{out}$ as the degrees inside and outside the
community $C$ containing $v$. If for all $v$ in $C$,
we have $d(v)^{in}>d(v)^{out}$, then $C$ is a strong
community; if $\sum_{v\in C}d(v)^{in}>\sum_{v\in C}d(v)^{out}$, then $C$ is a weak community; otherwise
$C$ not valid as a community. Consequently, $S$ is defined as
\begin{equation}
S=\frac{1}{nC}\sum_{i=1}^{nC}\mathrm{score}(C_i),
\end{equation}
where
$$\mathrm{score}(C_i)=\begin{cases}
1, & C_i\mbox{ is strong,}\\
0.5, & C_i \mbox{ is weak,} \\
0, & C_i \mbox{ is invalid.}
\end{cases}$$
A cluster with a high $S$ implies that the neighbouring
nodes tend to have the same label, which is true for
most graphs with community structures.
\item \textbf{Q}: \emph{Modularity(Q)} is a widely used
measure in community detection. In our RBR algorithm,
the modularity is the absolute value of the objective function value modulo a constant factor $2|E|$.
\end{itemize}

We first  evaluate the performance of RBR(r)  with respect to different values of $k$, i.e., the expected
number of communities. Specifically, for each $k\in\{ 5, 10, 20, 30, 50, 100,
200\}$, we perform a fixed number of iterations and report the corresponding
values of the  metrics (CC, S, Q). The paramter $p$ is set to $5$ in all cases. The results are shown in Figure \ref{comp-k}.
The modualarity $Q$ increases as $k$ becomes larger but it becomes almost the
same when $k\geqslant 50$. From an optimization perspective, a larger $k$ means a larger feasible
domain. Therefore, a better value should be expected. On the other hand, the
values of CC and S are almost the same for all $k$. 

\begin{figure}[!htpb]
\caption{Metrics with respect to different values of $k$} \label{comp-k}
\centering
\hfill
\subfigure[amazon]{
	\begin{tikzpicture}[scale=0.45]
	\pgfplotstableread[row sep=\\]{
k    CC    S    Q \\ 
5    0.456    0.500    0.686 \\ 
10    0.454    0.500    0.768 \\ 
20    0.458    0.500    0.827 \\ 
30    0.459    0.500    0.854 \\ 
50    0.463    0.500    0.881 \\ 
100    0.467    0.500    0.895 \\ 
200    0.477    0.500    0.899 \\ 
}\mydata
\begin{axis}[
    ybar=1pt,
    grid,
    width=1\textwidth,
    height=0.64\textwidth,
    bar width=8pt,
    legend pos=north west,
    symbolic x coords={5,10,20,30,50,100,200},
    xtick=data,
    x tick label style={rotate=45, anchor=east},
    xlabel={},
    ylabel={Value},
    title={},
    ymin=0,
    ymax=1,
]
    \addplot[fill=cyan] table[x=k,y=CC]{\mydata};
    \addplot[fill=blue!70!black] table[x=k,y=S]{\mydata};
    \addplot[fill=yellow!80] table[x=k,y=Q]{\mydata};
    \legend{CC,S,Q};
\end{axis}
	\end{tikzpicture}
}
\hfill
\subfigure[DBLP]{
	\begin{tikzpicture}[scale=0.45]
	\pgfplotstableread[row sep=\\]{
k    CC    S    Q \\ 
5    0.673    0.500    0.615 \\ 
10    0.669    0.500    0.700 \\ 
20    0.672    0.500    0.754 \\ 
30    0.678    0.500    0.782 \\ 
50    0.680    0.500    0.799 \\ 
100    0.679    0.500    0.801 \\ 
200    0.684    0.500    0.806 \\ 
}\mydata
\begin{axis}[
    ybar=1pt,
    grid,
    width=1\textwidth,
    height=0.64\textwidth,
    bar width=8pt,
    legend pos=north west,
    symbolic x coords={5,10,20,30,50,100,200},
    xtick=data,
    x tick label style={rotate=45, anchor=east},
    xlabel={},
    ylabel={Value},
    title={},
    ymin=0,
    ymax=1,
]
    \addplot[fill=cyan] table[x=k,y=CC]{\mydata};
    \addplot[fill=blue!70!black] table[x=k,y=S]{\mydata};
    \addplot[fill=yellow!80] table[x=k,y=Q]{\mydata};
    \legend{CC,S,Q};
\end{axis}
	\end{tikzpicture}
}

\hfill
\subfigure[youtube]{
	\begin{tikzpicture}[scale=0.45]
	\pgfplotstableread[row sep=\\]{
k    CC    S    Q \\ 
5    0.603    0.500    0.592 \\ 
10    0.603    0.500    0.695 \\ 
20    0.600    0.500    0.714 \\ 
30    0.601    0.500    0.712 \\ 
50    0.609    0.500    0.712 \\ 
100    0.614    0.490    0.710 \\ 
200    0.607    0.500    0.689 \\ 
}\mydata
\begin{axis}[
    ybar=1pt,
    grid,
    width=1\textwidth,
    height=0.64\textwidth,
    bar width=8pt,
    legend pos=north west,
    symbolic x coords={5,10,20,30,50,100,200},
    xtick=data,
    x tick label style={rotate=45, anchor=east},
    xlabel={},
    ylabel={Value},
    title={},
    ymin=0,
    ymax=1,
]
    \addplot[fill=cyan] table[x=k,y=CC]{\mydata};
    \addplot[fill=blue!70!black] table[x=k,y=S]{\mydata};
    \addplot[fill=yellow!80] table[x=k,y=Q]{\mydata};
    \legend{CC,S,Q};
\end{axis}
	\end{tikzpicture}
}
\hfill
\subfigure[LiveJournal]{
	\begin{tikzpicture}[scale=0.45]
	\pgfplotstableread[row sep=\\]{
k    CC    S    Q \\ 
5    0.455    0.500    0.653 \\ 
10    0.452    0.500    0.724 \\ 
20    0.448    0.500    0.737 \\ 
30    0.457    0.500    0.753 \\ 
50    0.462    0.490    0.757 \\ 
100    0.497    0.490    0.754 \\ 
200    0.527    0.500    0.753 \\ 
}\mydata
\begin{axis}[
    ybar=1pt,
    grid,
    width=1\textwidth,
    height=0.64\textwidth,
    bar width=8pt,
    legend pos=north west,
    symbolic x coords={5,10,20,30,50,100,200},
    xtick=data,
    x tick label style={rotate=45, anchor=east},
    xlabel={},
    ylabel={Value},
    title={},
    ymin=0,
    ymax=1,
]
    \addplot[fill=cyan] table[x=k,y=CC]{\mydata};
    \addplot[fill=blue!70!black] table[x=k,y=S]{\mydata};
    \addplot[fill=yellow!80] table[x=k,y=Q]{\mydata};
    \legend{CC,S,Q};
\end{axis}
	\end{tikzpicture}
}

\end{figure}

\begin{figure}[!htpb]
\caption{Metric with respect to different values of $p$} \label{comp-p}
\centering
\hfill
\subfigure[amazon]{
	\begin{tikzpicture}[scale=0.45]
	\pgfplotstableread[row sep=\\]{
k    CC    S    Q \\ 
1    0.426    0.500    0.755 \\ 
2    0.454    0.500    0.848 \\ 
5    0.467    0.500    0.895 \\ 
10    0.468    0.500    0.900 \\ 
20    0.470    0.500    0.901 \\ 
}\mydata
\begin{axis}[
    ybar=2pt,
    grid,
    width=1\textwidth,
    height=0.64\textwidth,
    bar width=12pt,
    legend pos=north west,
    symbolic x coords={1,2,5,10,20},
    xtick=data,
    x tick label style={rotate=45, anchor=east},
    xlabel={},
    ylabel={Value},
    title={},
    ymin=0,
    ymax=1,
]
    \addplot[fill=cyan] table[x=k,y=CC]{\mydata};
    \addplot[fill=blue!70!black] table[x=k,y=S]{\mydata};
    \addplot[fill=yellow!80] table[x=k,y=Q]{\mydata};
    \legend{CC,S,Q};
\end{axis}
	\end{tikzpicture}
}
\hfill
\subfigure[DBLP]{
	\begin{tikzpicture}[scale=0.45]
	\pgfplotstableread[row sep=\\]{
k    CC    S    Q \\ 
1    0.585    0.500    0.665 \\ 
2    0.655    0.500    0.760 \\ 
5    0.679    0.500    0.801 \\ 
10    0.686    0.500    0.809 \\ 
20    0.686    0.500    0.808 \\ 
}\mydata
\begin{axis}[
    ybar=2pt,
    grid,
    width=1\textwidth,
    height=0.64\textwidth,
    bar width=12pt,
    legend pos=north west,
    symbolic x coords={1,2,5,10,20},
    xtick=data,
    x tick label style={rotate=45, anchor=east},
    xlabel={},
    ylabel={Value},
    title={},
    ymin=0,
    ymax=1,
]
    \addplot[fill=cyan] table[x=k,y=CC]{\mydata};
    \addplot[fill=blue!70!black] table[x=k,y=S]{\mydata};
    \addplot[fill=yellow!80] table[x=k,y=Q]{\mydata};
    \legend{CC,S,Q};
\end{axis}
	\end{tikzpicture}
}

\hfill
\subfigure[road\_usa]{
	\begin{tikzpicture}[scale=0.45]
	\pgfplotstableread[row sep=\\]{
k    CC    S    Q \\ 
1    0.216    0.500    0.736 \\ 
2    0.216    0.500    0.766 \\ 
5    0.216    0.500    0.908 \\ 
10    0.216    0.500    0.915 \\ 
20    0.216    0.500    0.916 \\ 
}\mydata
\begin{axis}[
    ybar=2pt,
    grid,
    width=1\textwidth,
    height=0.64\textwidth,
    bar width=12pt,
    legend pos=north west,
    symbolic x coords={1,2,5,10,20},
    xtick=data,
    x tick label style={rotate=45, anchor=east},
    xlabel={},
    ylabel={Value},
    title={},
    ymin=0,
    ymax=1,
]
    \addplot[fill=cyan] table[x=k,y=CC]{\mydata};
    \addplot[fill=blue!70!black] table[x=k,y=S]{\mydata};
    \addplot[fill=yellow!80] table[x=k,y=Q]{\mydata};
    \legend{CC,S,Q};
\end{axis}
	\end{tikzpicture}
}
\hfill
\subfigure[Delaunay\_n24]{
	\begin{tikzpicture}[scale=0.45]
	\pgfplotstableread[row sep=\\]{
k    CC    S    Q \\ 
1    0.298    0.500    0.730 \\ 
2    0.296    0.500    0.733 \\ 
5    0.365    0.500    0.868 \\ 
10    0.369    0.500    0.876 \\ 
20    0.370    0.500    0.877 \\ 
}\mydata
\begin{axis}[
    ybar=2pt,
    grid,
    width=1\textwidth,
    height=0.64\textwidth,
    bar width=12pt,
    legend pos=north west,
    symbolic x coords={1,2,5,10,20},
    xtick=data,
    x tick label style={rotate=45, anchor=east},
    xlabel={},
    ylabel={Value},
    title={},
    ymin=0,
    ymax=1,
]
    \addplot[fill=cyan] table[x=k,y=CC]{\mydata};
    \addplot[fill=blue!70!black] table[x=k,y=S]{\mydata};
    \addplot[fill=yellow!80] table[x=k,y=Q]{\mydata};
    \legend{CC,S,Q};
\end{axis}
	\end{tikzpicture}
}

\end{figure}
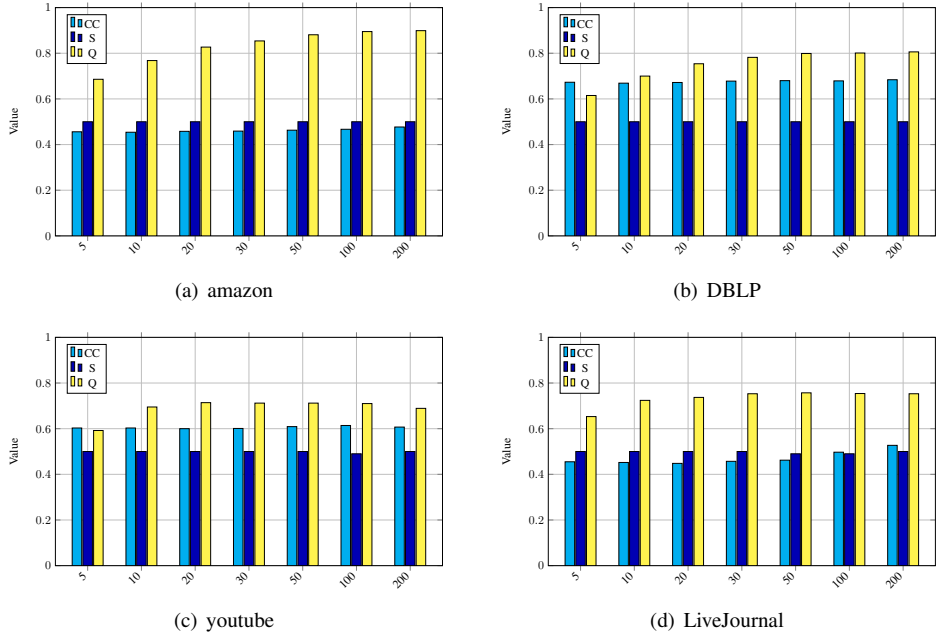

We next compare the performance of RBR(r) with respect to different values of
$p$ in the $\ell_0$ constraint
over each subproblem. By taking advantage of the sparsity,  RBR  is able to solve  problems
 of very large scale since the storage of $U$ is $\mathcal{O}(np)$. For each
$p\in \{1, 2, 5, 10, 20\}$, we set $k=100$ and perform a fixed number of
iterations. The results are shown in Figure \ref{comp-p}. Ideally,  RBR
 tends to search solutions in a larger space for
the subproblem if a larger
$p$ is applied.   From the figure, we
can observe  that  RBR  performs
better when a larger $p$ is used. The metrics become similar when $p\geqslant5$
but they are better than the results with values $p=1$ and $p=2$.  

Finally we compare our parallel RBR with  LOUVAIN,
which runs extremely fast and usually provides good results. It is worth
mentioning that LOUVAIN  can begin with different initial solutions.
By default it starts from a trivial classification that each node belongs to
a community that only contains itself. It merges  clusters in order
to obtain a higher modularity.  LOUVAIN  terminates if it is unable
to increase the modularity or the improvement is not significant. In this sense, there is no guarantee on the number of  communities it finally provides.
On the other hand, one can limit the maximal number of communities by
letting  LOUVAIN  start from a partition with no more than $k$
communities. Since  LOUVAIN  always reduces the number of
communities by merging, it is possible to obtain a low rank solution.
However, the quality of the solution may be highly dependent on the initial
values. Without utilizing much prior knowledge of these networks, we
set the initial value of both algorithms to be a random partition with
at most $k$ labels.

A summary of the computational results are shown in Table \ref{tab:large-all}. For each
algorithm, three sets of parameters are tested. For example, RBR(100, 1)
denotes that the desired number of communities $k$ is set to 100 and $p$ is
set to 1; LV(500) means that  LOUVAIN  begins with a randomly assigned
partition with at most 500 communities. LV(0), however, means that we
launch  LOUVAIN  with a trivial partition, in which case it may not produce low
rank solutions. In the table, $k_0$ stands for the actual
number of communities that the algorithm finally produces. If $k$ is given, then
$k_0$ will not exceed $k$. The CPU time of each algorithm is also reported.

\begin{table}[!htpb]
\centering
\scriptsize
\caption{Comparison between RBR and LOUVAIN algorithm under different settings} \label{tab:large-all}
\begin{tabular}{|r||rrrrr||rrrrr||}\hline 
\multirow{2}{*}{method} & \multicolumn{5}{c||}{amazon} & \multicolumn{5}{c||}{youtube}\\ 
\cline{2-11}
& $k_0$ & CC & S & Q & Time & $k_0$ & CC & S & Q & Time\\ \hline
RBR(100,1) & 100 & 0.426 & 0.500 & 0.755 & 3.5 & 100 & 0.589 & 0.480 & 0.623 & 12.5\\ 
RBR(20,5) & 20 & 0.458 & 0.500 & 0.827 & 3.0 & 20 & 0.600 & 0.500 & 0.714 & 10.2\\ 
RBR(100,5) & 100 & 0.467 & 0.500 & 0.895 & 5.0 & 98 & 0.614 & 0.490 & 0.710 & 22.6\\ 
LV(0) & 230 & 0.531 & 0.598 & 0.926 & 1.7 & 28514 & 0.930 & 0.159 & 0.723 & 8.7\\ 
LV(500) & 165 & 0.427 & 0.500 & 0.757 & 1.7 & 95 & 0.593 & 0.495 & 0.646 & 7.0\\ 
LV(2000) & 183 & 0.435 & 0.500 & 0.775 & 1.7 & 93 & 0.587 & 0.495 & 0.658 & 6.0\\ 
\hline 
\hline 
\multirow{2}{*}{method} & \multicolumn{5}{c||}{DBLP} & \multicolumn{5}{c||}{LiveJournal}\\ 
\cline{2-11}
& $k_0$ & CC & S & Q & Time & $k_0$ & CC & S & Q & Time\\ \hline
RBR(100,1) & 100 & 0.585 & 0.500 & 0.665 & 3.1 & 100 & 0.425 & 0.500 & 0.668 & 39.2\\ 
RBR(20,5) & 20 & 0.672 & 0.500 & 0.754 & 2.9 & 20 & 0.448 & 0.500 & 0.737 & 43.4\\ 
RBR(100,5) & 100 & 0.679 & 0.500 & 0.801 & 5.6 & 98 & 0.497 & 0.490 & 0.754 & 64.5\\ 
LV(0) & 187 & 0.742 & 0.618 & 0.820 & 2.9 & 2088 & 0.729 & 0.877 & 0.749 & 181.5\\ 
LV(500) & 146 & 0.583 & 0.500 & 0.673 & 2.1 & 101 & 0.433 & 0.500 & 0.665 & 81.9\\ 
LV(2000) & 155 & 0.578 & 0.500 & 0.681 & 2.3 & 113 & 0.447 & 0.500 & 0.687 & 172.7\\ 
\hline 
\hline 
\multirow{2}{*}{method} & \multicolumn{5}{c||}{email-Enron} & \multicolumn{5}{c||}{loc-Brightkite}\\ 
\cline{2-11}
& $k_0$ & CC & S & Q & Time & $k_0$ & CC & S & Q & Time\\ \hline
RBR(100,1) & 100 & 0.589 & 0.400 & 0.559 & 0.3 & 100 & 0.485 & 0.360 & 0.596 & 0.5\\ 
RBR(20,5) & 20 & 0.708 & 0.500 & 0.605 & 0.3 & 20 & 0.516 & 0.500 & 0.667 & 0.5\\ 
RBR(100,5) & 100 & 0.782 & 0.585 & 0.605 & 0.6 & 100 & 0.552 & 0.510 & 0.675 & 0.8\\ 
LV(0) & 1245 & 0.935 & 0.974 & 0.597 & 0.2 & 732 & 0.829 & 0.939 & 0.687 & 0.3\\ 
LV(500) & 45 & 0.705 & 0.600 & 0.581 & 0.2 & 47 & 0.513 & 0.500 & 0.637 & 0.3\\ 
LV(2000) & 331 & 0.894 & 0.899 & 0.604 & 0.2 & 79 & 0.653 & 0.671 & 0.661 & 0.4\\ 
\hline 
\hline 
\multirow{2}{*}{method} & \multicolumn{5}{c||}{loc-Gowalla} & \multicolumn{5}{c||}{Delaunay\_n24}\\ 
\cline{2-11}
& $k_0$ & CC & S & Q & Time & $k_0$ & CC & S & Q & Time\\ \hline
RBR(100,1) & 100 & 0.439 & 0.470 & 0.637 & 1.7 & 100 & 0.298 & 0.500 & 0.730 & 190.7\\ 
RBR(20,5) & 20 & 0.459 & 0.500 & 0.694 & 1.7 & 20 & 0.372 & 0.500 & 0.837 & 172.9\\ 
RBR(100,5) & 98 & 0.497 & 0.475 & 0.696 & 4.3 & 100 & 0.365 & 0.500 & 0.868 & 252.5\\ 
LV(0) & 850 & 0.592 & 0.761 & 0.705 & 1.2 & 354 & 0.430 & 0.500 & 0.990 & 189.1\\ 
LV(500) & 60 & 0.442 & 0.500 & 0.658 & 1.3 & 500 & 0.287 & 0.500 & 0.717 & 792.7\\ 
LV(2000) & 65 & 0.460 & 0.500 & 0.672 & 1.1 & 981 & 0.286 & 0.500 & 0.716 & 1390.5\\ 
\hline 
\hline 
\multirow{2}{*}{method} & \multicolumn{5}{c||}{europe\_osm} & \multicolumn{5}{c||}{road\_usa}\\ 
\cline{2-11}
& $k_0$ & CC & S & Q & Time & $k_0$ & CC & S & Q & Time\\ \hline
RBR(100,1) & 100 & 0.042 & 0.500 & 0.790 & 689.9 & 100 & 0.216 & 0.500 & 0.736 & 274.3\\ 
RBR(20,5) & 20 & 0.042 & 0.500 & 0.822 & 574.5 & 20 & 0.216 & 0.500 & 0.841 & 239.6\\ 
RBR(100,5) & 100 & 0.042 & 0.500 & 0.894 & 717.4 & 100 & 0.216 & 0.500 & 0.908 & 297.4\\ 
LV(0) & 3057 & 0.038 & 0.501 & 0.999 & 481.8 & 1569 & 0.225 & 0.510 & 0.998 & 303.2\\ 
LV(500) & 500 & 0.041 & 0.500 & 0.663 & 243.7 & 500 & 0.216 & 0.500 & 0.663 & 153.6\\ 
LV(2000) & 1999 & 0.041 & 0.500 & 0.663 & 337.3 & 1367 & 0.216 & 0.500 & 0.662 & 206.1\\ 
\hline 
\end{tabular}
\end{table}

From the perspective of  solution qualities, LV(0) outperforms  other algorithms
and settings. Its CC, S, Q are  high on most networks. Especially for the
last three large networks, the modularity Q is over 0.99. However, it is hard to
control $k_0$ in LV(0). It is
acceptable on networks such as DBLP and Delaunay\_n24.  It may produce over one thousand
communities and $k_0$ is over $28000$ in the youtube network. 
Although LV(500) and LV(2000) can return partitions with a small $k_0$, their
modularity is not as good as LV(0).

We can see that  RBR(100,5) almost always returns a
good moduality. For instance, it is better than LV(0) on email-Enron.  
 Note that RBR(20,5) is also very
competitive due to its low rank solutions without sacrificing the modularity too much. These facts indicate that  RBR  is better than  LOUVAIN if a small number of communities are needed.
 Additionally, it can be observed that RBR(100,1) can produce similar results as
LV(500) and LV(2000) do. In fact, the procedure of these algorithms
are similar in the sense of finding a new label for a given node and increasing the modularity Q.
The difference is that  LOUVAIN  only considers the labels of the node's neighbours while
 RBR  searches for the new label among all $k$ possibilities. If RBR is modified
 properly,  it  can also utilize the network
structure. 

 LOUVAIN  is faster in terms of the running time.  Each step of  LOUVAIN  only considers at most $d_{\mathrm{max}}$
nodes, where $d_{\mathrm{max}}$ is the maximum of the vertex degrees, while
 RBR  has to sort all $k$ possible labels and choose at most $p$ best
candidates. However, RBR(20,5) is still competitive on most graphs in terms of both speed and the capability of identifying good communities. It is the fastest one on networks such as LiveJournal and Delaunay\_n24.  Of course, it is possible for  RBR  to consider fewer labels
  according to the structure of the network in order to reduce the computational
  cost of sorting. 
  
  We should point out that  LOUVAIN  is run in a single
  threaded mode since its multi-threaded version is not
  available.  On the other hand,  RBR(r) is an asynchronous parallel method. Although its
  theoretical property is not clear, it works
  fine in all of our experiments. Thus, RBR(r) is  useful on multi-core machines.   
 The speedup of RBR(r) is shown in Figure \ref{fig:parallel}  when utilizing multiple
computing threads. We can see that our RBR algorithm has almost a linear
acceleration rate thanks to asynchronous parallelization. The average speedup is
close to $8$ using $12$ threads. In particular, it reaches $10$ times speedup 
on the loc-Brightkite network. 

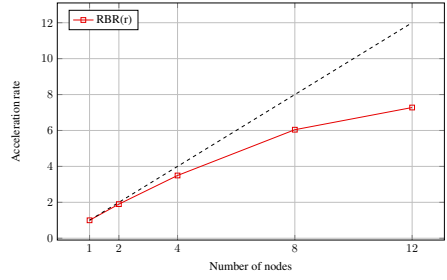
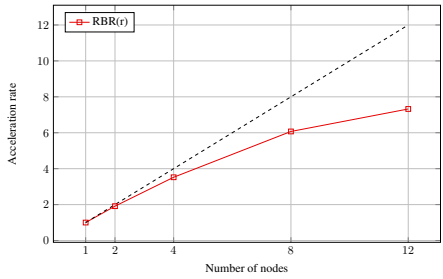
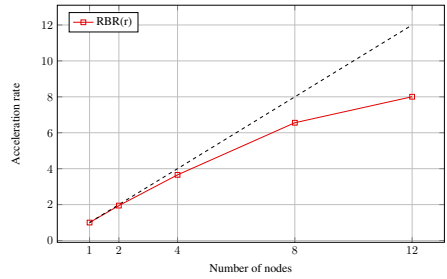
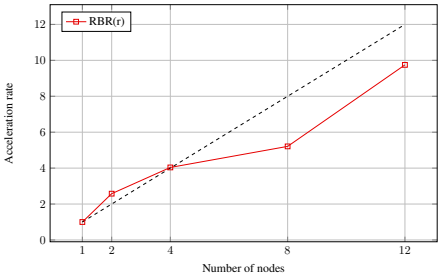
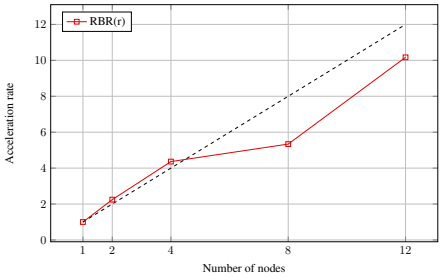
\begin{figure}[!htbp]
\centering
 \hfill
    \subfigure[amazon]{
    \begin{tikzpicture}[scale=0.45]
    \pgfplotstableread[row sep=\\]{
thread	DCRBR \\
1	1.000000 \\
2	1.902587 \\ 
4	3.538358 \\
8	6.074125 \\
12	7.257014 \\
}\mydata

\begin{axis}[
    grid,
    width=\textwidth,
    height=0.66\textwidth,
    legend pos=north west,
    xtick=data,
    ylabel={Acceleration rate},
    xlabel={Number of nodes}
    ]
    \addplot[thick,mark=square,draw=red!90!black] table[x=thread,y=DCRBR]{\mydata};

    \addplot[color=black,dashed] table[x=thread,y=thread]{\mydata};
    \legend{RBR(r)}
\end{axis}

    \end{tikzpicture}
  }
  \hfill
    \subfigure[youtube]{
    \begin{tikzpicture}[scale=0.45]
    \pgfplotstableread[row sep=\\]{
thread	DCRBR \\
1	1.000000 \\
2	1.899377 \\
4	3.494288 \\
8	6.042552 \\
12	7.278053 \\
}\mydata

\begin{axis}[
    grid,
    width=\textwidth,
    height=0.66\textwidth,
    legend pos=north west,
    xtick=data,
    ylabel={Acceleration rate},
    xlabel={Number of nodes}
    ]
    \addplot[thick,mark=square,draw=red!90!black] table[x=thread,y=DCRBR]{\mydata};

    \addplot[color=black,dashed] table[x=thread,y=thread]{\mydata};
    \legend{RBR(r)}
\end{axis}

    \end{tikzpicture}
  }

\vspace{.75cm}

\centering
 \hfill
    \subfigure[DBLP]{
    \begin{tikzpicture}[scale=0.45]
    \pgfplotstableread[row sep=\\]{
thread	DCRBR \\
1	1.000000 \\
2	1.912410 \\
4	3.521510 \\
8	6.071580 \\
12	7.323253 \\
}\mydata

\begin{axis}[
    grid,
    width=\textwidth,
    height=0.66\textwidth,
    legend pos=north west,
    xtick=data,
    ylabel={Acceleration rate},
    xlabel={Number of nodes}
    ]
    \addplot[thick,mark=square,draw=red!90!black] table[x=thread,y=DCRBR]{\mydata};

    \addplot[color=black,dashed] table[x=thread,y=thread]{\mydata};
    \legend{RBR(r)}
\end{axis}

    \end{tikzpicture}
  }
  \hfill
    \subfigure[LiveJournal]{
    \begin{tikzpicture}[scale=0.45]
    \pgfplotstableread[row sep=\\]{
thread	DCRBR \\
1	1.000000 \\
2	1.944175 \\
4	3.654660 \\
8	6.557224 \\
12	8.004910 \\
}\mydata

\begin{axis}[
    grid,
    width=\textwidth,
    height=0.66\textwidth,
    legend pos=north west,
    xtick=data,
    ylabel={Acceleration rate},
    xlabel={Number of nodes}
    ]
    \addplot[thick,mark=square,draw=red!90!black] table[x=thread,y=DCRBR]{\mydata};

    \addplot[color=black,dashed] table[x=thread,y=thread]{\mydata};
    \legend{RBR(r)}
\end{axis}

    \end{tikzpicture}
  }
  
\vspace{.75cm}
\centering
 \hfill
    \subfigure[email-Enron]{
    \begin{tikzpicture}[scale=0.45]
    \pgfplotstableread[row sep=\\]{
thread	DCRBR \\
1	1.000000 \\
2	2.577370 \\
4	4.037936 \\
8	5.210017 \\
12	9.747282 \\
}\mydata

\begin{axis}[
    grid,
    width=\textwidth,
    height=0.66\textwidth,
    legend pos=north west,
    xtick=data,
    ylabel={Acceleration rate},
    xlabel={Number of nodes}
    ]
    \addplot[thick,mark=square,draw=red!90!black] table[x=thread,y=DCRBR]{\mydata};

    \addplot[color=black,dashed] table[x=thread,y=thread]{\mydata};
    \legend{RBR(r)}
\end{axis}

    \end{tikzpicture}
  }
  \hfill
    \subfigure[loc-Brightkite]{
    \begin{tikzpicture}[scale=0.45]
    \pgfplotstableread[row sep=\\]{
thread	DCRBR \\
1	1.000000 \\
2	2.245357 \\
4	4.364165 \\
8	5.338647 \\
12	10.167972 \\
}\mydata

\begin{axis}[
    grid,
    width=\textwidth,
    height=0.66\textwidth,
    legend pos=north west,
    xtick=data,
    ylabel={Acceleration rate},
    xlabel={Number of nodes}
    ]
    \addplot[thick,mark=square,draw=red!90!black] table[x=thread,y=DCRBR]{\mydata};

    \addplot[color=black,dashed] table[x=thread,y=thread]{\mydata};
    \legend{RBR(r)}
\end{axis}

    \end{tikzpicture}
  }
\caption{Acceleration rate of RBR(r) on large real data} \label{fig:parallel}
\end{figure}

\section{Conclusion}  This paper is concerned with  the community detection problem. We develop a sparse and low-rank relaxation of the
modularity maximization model followed by either the K-means or weighted K-means
clusterings or a direct rounding procedure. A fast  algorithm called RBR with asynchronous parallellization is designed to efficiently
solve the proposed problem. We provide some  non-asymptotically bounds on  the misclassification rate with
high probability under some standard random network assumptions. Numerical experiments are fully consistent with the theoretical bounds. The new proposed method provides a competitive alternative state-of-the-art methods in terms of both misclassification rates and numerical efficiency, in addition to the assured theoretical bounds. 

\textbf{Acknowledgment:} The authors thank Yizhou Wang for testing an early version of the algorithm on Small-Scaled Real World Data.

\bibliographystyle{siam}
\bibliography{Community5.0}

\begin{thebibliography}{10}

\bibitem{2008JSMTE..10..008B}
{\sc V.~D. {Blondel}, J.-L. {Guillaume}, R.~{Lambiotte}, and E.~{Lefebvre}},
  {\em {Fast unfolding of communities in large networks}}, Journal of
  Statistical Mechanics: Theory and Experiment, 10 (2008), p.~10008.

\bibitem{Convex-Cai+Li-2015}
{\sc T~Tony Cai, Xiaodong Li, et~al.}, {\em Robust and computationally feasible
  community detection in the presence of arbitrary outlier nodes}, The Annals
  of Statistics, 43 (2015), pp.~1027--1059.

\bibitem{Spectral-Chaudhuri-2012}
{\sc Kamalika Chaudhuri, Fan~Chung Graham, and Alexander Tsiatas}, {\em
  Spectral clustering of graphs with general degrees in the extended planted
  partition model.}, in COLT, vol.~23, 2012, pp.~35--1.

\bibitem{CMM-X.Li-2015}
{\sc Y.~Chen, X.~Li, and J.~Xu}, {\em Convexified modularity maximization for
  degree-corrected stochastic block models}, arXiv preprint arXiv:1512.08425,
  (2015).

\bibitem{Convex-Chen-2012}
{\sc Yudong Chen, Sujay Sanghavi, and Huan Xu}, {\em Clustering sparse graphs},
  in Advances in neural information processing systems, 2012, pp.~2204--2212.

\bibitem{Convex-Chen-2014}
{\sc Yudong Chen and Jiaming Xu}, {\em Statistical-computational phase
  transitions in planted models: The high-dimensional setting.}, in ICML, 2014,
  pp.~244--252.

\bibitem{Spectral-Coja-2009}
{\sc Amin Coja-Oghlan and Andr{\'e} Lanka}, {\em Finding planted partitions in
  random graphs with general degree distributions}, SIAM Journal on Discrete
  Mathematics, 23 (2009), pp.~1682--1714.

\bibitem{DCSBM-Spectral-Dasgupta-2004}
{\sc Anirban Dasgupta, John~E Hopcroft, and Frank McSherry}, {\em Spectral
  analysis of random graphs with skewed degree distributions}, in Foundations
  of Computer Science, 2004. Proceedings. 45th Annual IEEE Symposium on, IEEE,
  2004, pp.~602--610.

\bibitem{UFSparse}
{\sc T.~A. Davis and Y.~Hu}, {\em The university of florida sparse matrix
  collection}.
\newblock \url{http://www.cise.ufl.edu/research/sparse/matrices}, 2011.

\bibitem{NMF-Equivalence-Ding-2005}
{\sc Chris Ding, Xiaofeng He, and Horst~D Simon}, {\em On the equivalence of
  nonnegative matrix factorization and spectral clustering}, in Proceedings of
  the 2005 SIAM International Conference on Data Mining, SIAM, 2005,
  pp.~606--610.

\bibitem{NMF-Orthoganal-Ding-2006}
{\sc Chris Ding, Tao Li, Wei Peng, and Haesun Park}, {\em Orthogonal
  nonnegative matrix t-factorizations for clustering}, in Proceedings of the
  12th ACM SIGKDD international conference on Knowledge discovery and data
  mining, ACM, 2006, pp.~126--135.

\bibitem{Convex-GrothIEQ-Guedon-2016}
{\sc Olivier Gu{\'e}don and Roman Vershynin}, {\em Community detection in
  sparse networks via grothendieck’s inequality}, Probability Theory and
  Related Fields, 165 (2016), pp.~1025--1049.

\bibitem{Spectral-Gulikers-2015}
{\sc Lennart Gulikers, Marc Lelarge, and Laurent Massouli{\'e}}, {\em A
  spectral method for community detection in moderately-sparse degree-corrected
  stochastic block models}, arXiv preprint arXiv:1506.08621,  (2015).

\bibitem{SBM-1983}
{\sc Paul~W Holland, Kathryn~Blackmond Laskey, and Samuel Leinhardt}, {\em
  Stochastic blockmodels: First steps}, Social networks, 5 (1983),
  pp.~109--137.

\bibitem{Book-Matrix-2012}
{\sc Roger~A Horn and Charles~R Johnson}, {\em Matrix analysis}, Cambridge
  university press, 2012.

\bibitem{Spectral-SCORE-Jin-2015}
{\sc Jiashun Jin et~al.}, {\em Fast community detection by score}, The Annals
  of Statistics, 43 (2015), pp.~57--89.

\bibitem{DCSBM-Newman-2011}
{\sc Brian Karrer and Mark~EJ Newman}, {\em Stochastic blockmodels and
  community structure in networks}, Physical Review E, 83 (2011), p.~016107.

\bibitem{NMF-Kim-2008}
{\sc Jingu Kim and Haesun Park}, {\em Sparse nonnegative matrix factorization
  for clustering}, tech. report, Georgia Institute of Technology, 2008.

\bibitem{NMF-Kuang-2012}
{\sc Da~Kuang, Chris Ding, and Haesun Park}, {\em Symmetric nonnegative matrix
  factorization for graph clustering}, in Proceedings of the 2012 SIAM
  international conference on data mining, SIAM, 2012, pp.~106--117.

\bibitem{snapnets}
{\sc Jure Leskovec and Andrej Krevl}, {\em {SNAP Datasets}: {Stanford} large
  network dataset collection}.
\newblock \url{http://snap.stanford.edu/data}, jun 2014.

\bibitem{Spectral-Mcsherry-2001}
{\sc Frank McSherry}, {\em Spectral partitioning of random graphs}, in
  Foundations of Computer Science, 2001. Proceedings. 42nd IEEE Symposium on,
  IEEE, 2001, pp.~529--537.

\bibitem{Modularity-Newman-2006}
{\sc Mark~EJ Newman}, {\em Modularity and community structure in networks},
  Proceedings of the national academy of sciences, 103 (2006), pp.~8577--8582.

\bibitem{LR+Sparse-Oymak-2011}
{\sc Samet Oymak and Babak Hassibi}, {\em Finding dense clusters via" low rank+
  sparse" decomposition}, arXiv preprint arXiv:1104.5186,  (2011).

\bibitem{AsynParallel-Cyclades-2016}
{\sc Xinghao Pan, Maximilian Lam, Stephen Tu, Dimitris Papailiopoulos,
  Ce~Zhang, Michael~I Jordan, Kannan Ramchandran, Chris Re, and Benjamin
  Recht}, {\em Cyclades: Conflict-free asynchronous machine learning}, arXiv
  preprint arXiv:1605.09721,  (2016).

\bibitem{NMF-Orthoganal-Paul-2016}
{\sc Subhadeep Paul and Yuguo Chen}, {\em Orthogonal symmetric non-negative
  matrix factorization under the stochastic block model}, arXiv preprint
  arXiv:1605.05349,  (2016).

\bibitem{NMF-Psorakis-2011}
{\sc Ioannis Psorakis, Stephen Roberts, Mark Ebden, and Ben Sheldon}, {\em
  Overlapping community detection using bayesian non-negative matrix
  factorization}, Physical Review E, 83 (2011), p.~066114.

\bibitem{Spectral-Qin-2013}
{\sc Tai Qin and Karl Rohe}, {\em Regularized spectral clustering under the
  degree-corrected stochastic blockmodel}, in Advances in Neural Information
  Processing Systems, 2013, pp.~3120--3128.

\bibitem{AsynParallel-Hogwild-2011}
{\sc Benjamin Recht, Christopher Re, Stephen Wright, and Feng Niu}, {\em
  Hogwild: A lock-free approach to parallelizing stochastic gradient descent},
  in Advances in Neural Information Processing Systems, 2011, pp.~693--701.

\bibitem{Spectral-Rohe-2011}
{\sc Karl Rohe, Sourav Chatterjee, and Bin Yu}, {\em Spectral clustering and
  the high-dimensional stochastic blockmodel}, The Annals of Statistics,
  (2011), pp.~1878--1915.

\bibitem{Spectral-Sussman-2012}
{\sc Daniel~L Sussman, Minh Tang, Donniell~E Fishkind, and Carey~E Priebe},
  {\em A consistent adjacency spectral embedding for stochastic blockmodel
  graphs}, Journal of the American Statistical Association, 107 (2012),
  pp.~1119--1128.

\bibitem{Traud}
{\sc A.~L. {Traud}, E.~D. {Kelsic}, P.~J. {Mucha}, and M.~A. {Porter}}, {\em
  {Comparing Community Structure to Characteristics in Online Collegiate Social
  Networks}}, ArXiv e-prints,  (2008).

\bibitem{NMF-Wang-2011}
{\sc Fei Wang, Tao Li, Xin Wang, Shenghuo Zhu, and Chris Ding}, {\em Community
  discovery using nonnegative matrix factorization}, Data Mining and Knowledge
  Discovery, 22 (2011), pp.~493--521.

\bibitem{otherMethods-Wang-2015}
{\sc Meng Wang, Chaokun Wang, Jeffrey~Xu Yu, and Jun Zhang}, {\em Community
  detection in social networks: an in-depth benchmarking study with a
  procedure-oriented framework}, Proceedings of the VLDB Endowment, 8 (2015),
  pp.~998--1009.

\bibitem{NMF-Yang-2012}
{\sc Zhirong Yang, Tele Hao, Onur Dikmen, Xi~Chen, and Erkki Oja}, {\em
  Clustering by nonnegative matrix factorization using graph random walk}, in
  Advances in Neural Information Processing Systems, 2012, pp.~1079--1087.

\bibitem{Spectral-OCCAM-Zhang-2014}
{\sc Yuan Zhang, Elizaveta Levina, and Ji~Zhu}, {\em Detecting overlapping
  communities in networks using spectral methods}, arXiv preprint
  arXiv:1412.3432,  (2014).

\end{thebibliography}

\end{document}